\newtheorem{thm}{Theorem}
\newtheorem{prop}[thm]{Proposition}
\newtheorem{lem}[thm]{Lemma}
\newtheorem{cor}[thm]{Corollary}
\theoremstyle{definition}
\newtheorem{dfn}[thm]{Definition}
\newtheorem{rem}[thm]{Remark}
\newtheorem{ex}[thm]{Example}
\theoremstyle{remark}
\newtheorem*{org}{Organization}
\newtheorem*{ack}{Acknowledgments}
\numberwithin{thm}{section}
\numberwithin{equation}{section}
\DeclareMathOperator{\dist}{dist}
\DeclareMathOperator{\diam}{diam}
\DeclareMathOperator{\vol}{vol}
\DeclareMathOperator{\gexp}{gexp}
\DeclareMathOperator{\rank}{rank}
\DeclareMathOperator{\const}{const}
\begin{document}

\title[Uniform boundedness on extremal subsets]{Uniform boundedness on extremal subsets in Alexandrov spaces}
\author[T. Fujioka]{Tadashi Fujioka}
\address{Department of Mathematics, Osaka University, Toyonaka, Osaka 560-0043, Japan}
\email{\href{mailto:tfujioka210@gmail.com}{tfujioka210@gmail.com}}
\date{\today}
\subjclass[2010]{53C20, 53C23}
\keywords{Alexandrov spaces, extremal subsets, Betti numbers, volume, collapse, essential coverings}

\begin{abstract}
In this paper, we study extremal subsets in Alexandrov spaces with dimension $n$, curvature $\ge\kappa$, and diameter $\le D$.
We show that the following three quantities are uniformly bounded above in terms of $n$, $\kappa$, and $D$: (1) the number of extremal subsets in an Alexandrov space; (2) the Betti numbers of an extremal subset; (3) the volume of an extremal subset.
The proof is an application of essential coverings introduced by Yamaguchi.
\end{abstract}

\maketitle

\section{Introduction}\label{sec:intro}

\subsection{Main results}\label{sec:main}
Alexandrov spaces are metric spaces with a lower sectional curvature bound in the sense of Toponogov's comparison theorem in Riemannian geometry.
Alexandrov spaces naturally arise as Gromov--Hausdorff limits of Riemannian manifolds or quotient spaces of Riemannian manifolds by isometric group actions.
Extremal subsets are singular sets in Alexandrov spaces defined in terms of critical points of distance functions, introduced by Perelman--Petrunin \cite{PP1}.
Typical examples are the boundary of an Alexandrov space and the projection of the fixed point set to the quotient space of a Riemannian manifold mentioned above.
See \S\ref{sec:pre} for the precise definitions, examples, and properties.
Although extremal subsets equipped with the induced intrinsic metrics do not generally have lower curvature bounds, they enjoy several important properties that hold for Alexandrov spaces (for instance, see \cite{PP1}, \cite{PP2}, \cite{Pet1}, \cite{Pet2}, \cite{K}, \cite{F}).

Let $\mathcal A(n,\kappa,D)$ denote the set of isometry classes of Alexandrov spaces with dimension $n$, curvature $\ge\kappa$, and diameter $\le D$.
It is well-known that $\bigcup_{k\le n}\mathcal A(k,\kappa,D)$ is compact with respect to the Gromov--Hausdorff distance.
From this fact, one would expect various quantities on Alexandrov spaces in $\mathcal A(n,\kappa,D)$ to be uniformly bounded.
The main results of this paper are the following uniform boundedness on extremal subsets.

\begin{thm}\label{thm:main}
For given $n$, $\kappa$, and $D$, there exists a constant $C=C(n,\kappa,D)$ such that the following hold for any $M\in\mathcal A(n,\kappa,D)$:
\begin{enumerate}
\item The number of extremal subsets in $M$ is not greater than $C$.
\item The total Betti number of any extremal subset $E$ of $M$ is not greater than $C$ (independent of the coefficient field).
\item The $m$-dimensional Hausdorff measure of any extremal subset $E$ of $M$ is not greater than $C$, where $m=\dim E$.
\end{enumerate}
\end{thm}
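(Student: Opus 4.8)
The plan is to deduce all three assertions from a single device, Yamaguchi's essential coverings, used in place of the precompactness of $\mathcal A(n,\kappa,D)$ — which by itself is not available here, since an extremal subset with its induced intrinsic metric need be neither an Alexandrov space nor continuous under Gromov--Hausdorff convergence. Recall that an essential covering produces, for each scale, a covering of $M$ by a number of balls bounded in terms of $n,\kappa,D$ alone, each ball carrying a controlled strainer/collapse structure over a lower-dimensional base, with the essential (non-collapsed) part accounted for uniformly. The preliminary step, on which everything rests, is a compatibility lemma: on each ball $B(p,r)$ of the covering, of rank $k$, the accompanying almost Lipschitz submersion onto a domain in a $k$-dimensional space sends $E\cap B(p,r)$ to the base with fibers that are (or contain as a dense subset) extremal subsets of the corresponding fibers of the ambient map. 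This uses that extremal subsets are invariant under gradient flows and under the gradient-exponential and quasigeodesic constructions defining these maps, together with the local conical structure of $E$ (the space of directions $\Sigma_pE\subset\Sigma_pM$ is again extremal, of dimension $\dim E-1$ at a manifold point of $E$), which also furnishes the starting point of the induction on $n$ carried out below.

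For part (3), fix $E$ with $m=\dim E$ and take such a covering $\{B(p_i,r_i)\}_{i=1}^{N}$ with $N\le N(n,\kappa,D)$. On each ball, estimate $\mathcal H^{m}(E\cap B(p_i,r_i))$ by pushing forward along the submersion onto its $k_i$-dimensional base: the base contributes at most $\const(n,\kappa,D)\,r_i^{k_i}$, while the inductive hypothesis, applied to a typical fiber (an Alexandrov-type space of dimension $\le n-1$, rescaled to unit size), bounds the fiberwise $\mathcal H^{m-k_i}$-measure of $E$ inside it. An Eilenberg-type coarea inequality combines these, and summing over the $\le N$ balls yields the bound. The base case $m=0$ is that a $0$-dimensional extremal subset is a finite set of cardinality bounded by $n,\kappa,D$: this follows from the characterization of extremal points as those whose iterated spaces of directions have diameter $\le\pi/2$, together with a packing estimate.

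For part (2), feed the same covering into the Mayer--Vietoris spectral sequence of the finite closed cover $\{E\cap B(p_i,r_i)\}$. Since $N$ is bounded, its nerve has at most $N$ vertices, so the total Betti number of $E$ is bounded by a universal function of $N$ and of the total Betti numbers of the multiple intersections $E\cap B(p_{i_0})\cap\cdots\cap B(p_{i_q})$. Up to the controlled fibration structure, each such intersection is (a retract of) a product of a Euclidean piece with an extremal subset of a fiber, whose Betti number is controlled by the inductive hypothesis; on a single essential ball, the conical structure of $E$ makes $E\cap B(p_i,r_i)$ contractible, which anchors the induction.

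For part (1), it suffices to bound the number $P$ of \emph{primitive} extremal subsets, since every extremal subset is a finite union of primitive ones and hence the total number is at most $2^{P}$. A $d$-dimensional primitive extremal subset has a dense open set that is a topological $d$-manifold, and distinct $d$-dimensional primitive extremal subsets give distinct components of the $d$-dimensional stratum of the extremal stratification. Using the essential covering one shows this stratification is controlled at a scale $\epsilon=\epsilon(n,\kappa,D)$ — within each ball of the covering it consists of a single controlled piece in each dimension — so the number of components, and thus $P$, is at most $C(n,\kappa,D)$. I expect the genuine difficulty to lie in the compatibility lemma of the first paragraph: showing that the strainer maps, gradient flows and collapse maps underlying essential coverings carry extremal subsets to sets with the asserted uniformly fibered structure, creating no new topology or measure in passing to fibers. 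Once that is in place, the rest is bookkeeping over a covering of bounded size.
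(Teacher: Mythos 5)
Your argument hinges on the ``compatibility lemma'' of the first paragraph, and that is exactly where it breaks down. An essential covering (in Yamaguchi's sense, which is what the paper uses) does \emph{not} come equipped with almost Lipschitz submersions onto $k$-dimensional bases whose fibers meet $E$ in extremal subsets of the fibers. What it provides is a nested, bounded-depth family of metric balls such that on each terminal ball $\dist_{p_\alpha}$ has no critical points on $\bar B_\alpha\setminus\{p_\alpha\}$, and on each non-terminal ball it has no critical points on the annulus $\bar B\setminus\hat B$. There is no fibered structure over a base, no ``typical fiber'' that is an Alexandrov space of dimension $\le n-1$, and no known statement that collapsing fibers intersect $E$ extremally; the fibration theorem for extremal subsets in [PP1] concerns admissible maps restricted to $E$, not a decomposition of $E$ into fiberwise extremal pieces of collapsed fibers. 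Consequently your coarea scheme for (3) and your ``product of a Euclidean piece with an extremal subset of a fiber'' identification of multiple intersections for (2) rest on an unestablished structural claim. A second, independent gap in (2): a single-scale nerve/Mayer--Vietoris bound does not work, because multiple intersections of metric balls in an Alexandrov space have uncontrolled topology. The paper must use Gromov's rank lemma on the dilated balls $\lambda_i B_\alpha=10^i B_\alpha$ inside the isotopy covering system, bounding only the \emph{rank of inclusion maps} between intersections at consecutive scales by sandwiching each intersection between concentric balls of the smallest member; the topology of the intersections themselves is never identified.

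For comparison, here is what actually carries each part. For (1): on a critical-point-free punctured ball every extremal subset meeting the ball contains the center and is determined near $p$ by $\Sigma_pE$ (since $\Sigma_p(\overline{E\setminus F})=\overline{\Sigma_pE\setminus\Sigma_pF}$), giving $\nu(B(p,r))\le\nu(\Sigma_p)+1$ and an induction on $n$; across a critical-point-free annulus $\nu$ is constant because a nearest point of $\overline{E\setminus F}$ would be critical. Your count via components of strata of ``primitive'' subsets is not justified and is not needed. For (2): the stability theorem gives that $B_\alpha\cap E$ is homeomorphic to the cone $T_{p_\alpha}E$ on terminal balls (hence contractible), and the fibration theorem gives a product structure on annuli; these feed into the rank lemma over the depth-$\le n$ covering. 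For (3): one proves (via a local homology argument) that gradient flows of $\cosh\circ\dist_p-1$ are surjective onto extremal subsets, hence $\gexp_p(B(o,R)\cap T_pE)\supset B(p,r)\cap E$ with $R$ uniformly controlled, and uses that $\gexp_p$ is $1$-Lipschitz from the elliptic cone over $\Sigma_p$; this reduces $\vol_m(B(p,r)\cap E)$ to $\vol_{m-1}(\Sigma_pE)$ and an induction on $n$, while the collapsed balls are handled by the $\frac{\sinh R}{\sinh 2\delta_i}$-Lipschitz radial rescaling map and a reverse induction on $\dim X$. Your base case for (3) is also off: $m=0$ is not the anchor; the induction is on the ambient dimension $n$, with $\Sigma_pE\subset\Sigma_p$ as the lower-dimensional object.
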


Here the Hausdorff dimension of an extremal subset is equal to the topological dimension, which is an integer (\cite[1.1(1)]{F}, \cite[4.4(2)]{A}).
Note that an extremal subset has two metrics, i.e., the induced intrinsic metric and the restriction of the original metric of the ambient Alexandrov space.
However, the Hausdorff measure is independent of which metric is chosen (\cite[3.17]{F}).

\subsection{Related results}\label{sec:rel}
We first mention related results in the literature.
As for Theorem \ref{thm:main}(1), it is already known that every compact Alexandrov space has only finitely many extremal subsets (\cite[3.6]{PP1}).
Our proof can be viewed as a refinement of the original proof of this fact.
Note that the same statement as Theorem \ref{thm:main}(1) is cited in \cite[4.5]{A} as an unpublished result of Petrunin.
Theorem \ref{thm:main}(2) is an analog of Gromov's Betti number theorem \cite{G1} for Riemannian manifolds.
Liu--Shen \cite{LS} generalized Gromov's theorem to Alexandrov spaces, and later Yamaguchi \cite{Y} gave an alternative proof using his essential coverings (discussed in the next section).
Our proof is a slight modification of Yamaguchi's one.
Regarding Theorem \ref{thm:main}(3), there is a recent stronger result of Li--Naber \cite{LN} on the volumes of singular sets of Alexandrov spaces.
More precisely, \cite[1.4]{LN} together with \cite[3.5]{F} implies Theorem \ref{thm:main}(3).
However, their proof is quite different from ours, and also our technique provides further information on the induced intrinsic metrics of extremal subsets (see Corollary \ref{cor:precpt} below).

In some special cases, the optimal constants of Theorem \ref{thm:main} and the rigidity in the equality cases are known:
\begin{itemize}
\item (Perelman \cite[4.3]{Per3}) The maximal number of extremal points in a compact $n$-dimensional Alexandrov space of nonnegative curvature is $2^n$.
The classification of the equality case was given by Lebedeva \cite{L}.
\item (W\"orner \cite[1.8]{W}) The maximal number of boundary strata of a compact  $n$-dimensional Alexandrov space of nonnegative curvature is $2n$, and the equality is attained only by a Euclidean cuboid (see \cite{W} for the definition of boundary strata, where this result is attributed to Perelman).
\item (Petrunin \cite[3.3.5]{Pet2}) The maximal volume of the boundary of an $n$-dimensional Alexandrov space with curvature $\ge1$ is equal to the volume of the standard unit sphere of dimension $n-1$.
The equality case was classified by Grove--Petersen \cite{GP} and the general curvature bound case was studied by Deng--Kapovitch \cite{DK}.
Note that it is unknown whether the boundary of an Alexandrov space equipped with the induced intrinsic metric is an Alexandrov space with the same lower curvature bound.
\end{itemize}

\subsection{Main tools}\label{sec:tool}
We next discuss the proof of Theorem \ref{thm:main}.
Our main tools are essential coverings and isotopy covering systems introduced by Yamaguchi \cite{Y}, which are related to the collapsing of Alexandrov spaces.
We defer the precise definitions to \S\ref{sec:ess} and here we give an example illustrating these concepts.
Consider a thin rectangle $M=[0,1]\times[0,\varepsilon]$, where $\varepsilon\ll 1$.
This is an Alexandrov space of nonnegative curvature, and each edge and each vertex are extremal subsets.
For any $p\in M$, the distance function from $p$ has critical points in its closed $\varepsilon$-neighborhood (except $p$; see \S\ref{sec:pre} for the definition of critical point).
Therefore, if one tries to cover $M$ by metric balls on which distance functions from the centers have no critical points, the minimal number of such balls grows with the order $\varepsilon^{-1}$ as $\varepsilon\to0$, which cannot be uniformly bounded.
For example, metric balls of radius $4\varepsilon/5$ centered at $\varepsilon$-discrete points on the long edges forms such a covering (see Figure \ref{fig:ess1}).

\begin{figure}[h]
\centering
\begin{tikzpicture}
\draw[thick](-3,-0.5)rectangle(3,0.5);

\coordinate(a0)at(-3,0.5);
\coordinate(a1)at(-2,0.5);
\coordinate(a2)at(-1,0.5);
\coordinate(a3)at(0,0.5);
\coordinate(a4)at(1,0.5);
\coordinate(a5)at(2,0.5);
\coordinate(a6)at(3,0.5);

\coordinate(b0)at(-3,-0.5);
\coordinate(b1)at(-2,-0.5);
\coordinate(b2)at(-1,-0.5);
\coordinate(b3)at(0,-0.5);
\coordinate(b4)at(1,-0.5);
\coordinate(b5)at(2,-0.5);
\coordinate(b6)at(3,-0.5);

\draw[<->](-3.5,0.5)to[edge node={node[midway,fill=white]{$\varepsilon$}}](-3.5,-0.5);
\draw[<->](-3,-1)to[edge node={node[midway,fill=white]{$1$}}](3,-1);
\draw[white](3.5,0.5)to(3.5,-0.5);

\fill(a0)circle(1.5pt);
\fill(a1)circle(1.5pt);
\fill(a2)circle(1.5pt);
\fill(a3)circle(1.5pt);
\fill(a4)circle(1.5pt);
\fill(a5)circle(1.5pt);
\fill(a6)circle(1.5pt);

\fill(b0)circle(1.5pt);
\fill(b1)circle(1.5pt);
\fill(b2)circle(1.5pt);
\fill(b3)circle(1.5pt);
\fill(b4)circle(1.5pt);
\fill(b5)circle(1.5pt);
\fill(b6)circle(1.5pt);

\draw([shift=(a0)]-100:0.8)arc(-100:10:0.8);
\draw([shift=(a1)]-190:0.8)arc(-190:10:0.8);
\draw([shift=(a2)]-190:0.8)arc(-190:10:0.8);
\draw([shift=(a3)]-190:0.8)arc(-190:10:0.8);
\draw([shift=(a4)]-190:0.8)arc(-190:10:0.8);
\draw([shift=(a5)]-190:0.8)arc(-190:10:0.8);
\draw([shift=(a6)]-190:0.8)arc(-190:-80:0.8);

\draw([shift=(b0)]100:0.8)arc(100:-10:0.8);
\draw([shift=(b1)]190:0.8)arc(190:-10:0.8);
\draw([shift=(b2)]190:0.8)arc(190:-10:0.8);
\draw([shift=(b3)]190:0.8)arc(190:-10:0.8);
\draw([shift=(b4)]190:0.8)arc(190:-10:0.8);
\draw([shift=(b5)]190:0.8)arc(190:-10:0.8);
\draw([shift=(b6)]190:0.8)arc(190:80:0.8);
\end{tikzpicture}
\caption{}\label{fig:ess1}
\end{figure}

\begin{figure}[h]
\centering
\begin{tikzpicture}
\draw[thick](-3,-0.5)rectangle(3,0.5);

\coordinate[label=left:$p_1$](p1)at(-3,0);
\coordinate[label=left:$p_{11}$](p11)at(-3,0.5);
\coordinate[label=left:$p_{12}$](p12)at(-3,-0.5);
\coordinate[label=right:$p_2$](p2)at(3,0);
\coordinate[label=right:$p_{21}$](p21)at(3,0.5);
\coordinate[label=right:$p_{22}$](p22)at(3,-0.5);

\fill(p1)circle(1.5pt);
\fill(p11)circle(1.5pt);
\fill(p12)circle(1.5pt);
\fill(p2)circle(1.5pt);
\fill(p21)circle(1.5pt);
\fill(p22)circle(1.5pt);

\draw(-30:1.5)arc(-30:30:1.5)node[above]{$B_1$};
\draw[dashed]([shift=(p1)]-90:0.6)arc(-90:90:0.6)node[above]{$\hat B_1$};
\draw([shift=(p11)]-100:0.9)arc(-100:10:0.9)node[above]{$B_{11}$};
\draw([shift=(p12)]100:0.9)arc(100:-10:0.9)node[below]{$B_{12}$};

\draw(-150:1.5)arc(-150:-210:1.5)node[above]{$B_2$};
\draw[dashed]([shift=(p2)]-90:0.6)arc(-90:-270:0.6)node[above]{$\hat B_2$};
\draw([shift=(p21)]-80:0.9)arc(-80:-190:0.9)node[above]{$B_{21}$};
\draw([shift=(p22)]80:0.9)arc(80:190:0.9)node[below]{$B_{22}$};
\end{tikzpicture}
\caption{}\label{fig:ess2}
\end{figure}

However, we can cover $M$ by a much smaller number of metric balls with similar properties, as follows (see Figure \ref{fig:ess2}).
Let $p_1$, $p_2$ be the midpoints of the short edges and $p_{11}$, $p_{12}$, $p_{21}$, $p_{22}$ the four vertices such that $p_{ij}$ are adjacent to $p_i$, as shown in the figure.
First, we cover $M$ by two metric balls $B_i$ of radius $2/3$ centered at $p_i$.
Then the distance function from $p_i$ has no critical points in $B_i\setminus\hat B_i$, where $\hat B_i$ is a concentric ball of radius $2\varepsilon/3$ (the dashed arcs in the figure).
Second, we cover these $\hat B_i$ by four metric balls $B_{ij}$ of radius $5\varepsilon/6$ centered at $p_{ij}$.
Then the distance function from $p_{ij}$ has no critical points in $B_{ij}$ (except $p_{ij}$).
In summary:
\begin{enumerate}
\item $M$ is covered by $B_i$;
\item The critical points of the distance function from $p_i$ are covered by $B_{ij}$;
\item $B_{ij}$ is free of critical points of the distance function from $p_{ij}$.
\end{enumerate}
Such a multi-step covering is called an \textit{isotopy covering system} and the last four balls $B_{ij}$ are called an \textit{essential covering} of $M$.
In general, one can extend this construction to more than two steps (i.e., if $B_{ij}$ still contains critical points, then cover them by $B_{ijk}$).
The number of steps needed to obtain balls with no critical points is called \textit{depth}.
Yamaguchi \cite{Y} proved that for any $M\in\mathcal A(n,\kappa,D)$, the minimal number of metric balls forming an essential covering of $M$ with depth $\le n$ is uniformly bounded above by $C(n,\kappa,D)$ (Theorem \ref{thm:ess}).

Once such a covering is obtained, the proof of Theorem \ref{thm:main} reduces to how to control each quantity (the number of extremal subsets, the Betti number of an extremal subset, and the volume of an extremal subset) on this covering.
The general outline is as follows.
Let $\chi$ denote the quantity under consideration.
First we give a bound on $\chi(B_{ij})$, using the fact that the essential covering $B_{ij}$ is critical point free.
This yields a bound on $\chi(\hat B_i)$, as $\hat B_i$ is covered by $B_{ij}$.
Next we give a bound on $\chi(B_i)$ in terms of $\chi(\hat B_i)$, using the fact that the annulus $B_i\setminus\hat B_i$ is critical point free.
Finally, since $M$ is covered by $B_i$, we obtain a bound on $\chi(M)$.

To carry out the above argument, we use yet another tools.
For Theorem \ref{thm:main}(1), we use the original technique of Perelman--Petrunin \cite[3.6]{PP1}.
For Theorem \ref{thm:main}(2), we use the fibration theorem and the stability theorem of Perelman \cite{Per1}, \cite{Per2}, Perelman--Petrunin \cite{PP1}, and Kapovitch \cite{K}.
For Theorem \ref{thm:main}(3), we use the gradient-exponential map of Perelman--Petrunin \cite{PP2}.

\subsection{Remarks and Corollaries}\label{sec:rem}
To obtain the conclusion of Theorem \ref{thm:main}, it is necessary to fix the dimension of ambient Alexandrov spaces, and it is not sufficient to fix the dimension of extremal subsets.
For example, consider the following $n$-dimensional Alexandrov space of curvature $\ge1$:
\[M_n=\left\{(x_1,\dots,x_{n+1})\in\mathbb R^{n+1}\mid x_1^2+\dots+x_{n+1}^2=1,\ x_1,\dots,x_n\ge0\right\}.\]
For each $1\le i\le n$, the minimal geodesic $\gamma_i$ between $(0,\dots,0,1)$ and $(0,\dots,0,-1)$ passing through $(0,\dots,\underset i1,\dots,0,0)$ is a one-dimensional extremal subset of $M_n$.
Therefore the number of one-dimensional extremal subsets in $M_n$ is not uniformly bounded when $n\to\infty$.
Similarly, $E_n=\bigcup_{i=1}^n\gamma_i$ is a one-dimensional extremal subset of $M_n$, but neither the Betti number nor the volume of $E_n$ is not uniformly bounded when $n\to\infty$.
The reason this happens, even though the dimension of extremal subsets is fixed, is because the dimension of $M_n$ is not fixed.

There are two corollaries of the main theorem.
For nonnegatively curved spaces, Theorem \ref{thm:main} (1) and (2) hold without the upper diameter bound $D$.

\begin{cor}\label{cor:nonneg}
For given $n$, there exists a constant $C(n)$ such that the following hold for any Alexandrov space $M$ of nonnegative curvature:
\begin{enumerate}
\item The number of extremal subsets in $M$ is not greater than $C(n)$.
\item The total Betti number of any extremal subset $E$ of $M$ is not greater than $C(n)$ (independent of the coefficient field).
\end{enumerate}
\end{cor}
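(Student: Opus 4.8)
The plan is to reduce to the compact case and invoke Theorem~\ref{thm:main}, the passage from compact to noncompact being effected by the soul construction. Note first that both quantities appearing in (1) and (2) are invariant under rescaling of the metric, and that nonnegative curvature is a scale-invariant condition. Hence, if $M$ is compact, rescaling it to have diameter $\le 1$ and applying Theorem~\ref{thm:main}(1),(2) with $\kappa=0$ and $D=1$ gives the bound $C(n,0,1)$ in that case; only noncompact $M$ remains to be treated.

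Assume $M$ is noncompact with nonnegative curvature. By the Cheeger--Gromoll-type soul construction for Alexandrov spaces (using Busemann functions), $M$ contains a compact \emph{soul} $S$ --- a totally convex subset, hence an Alexandrov space with curvature $\ge 0$ and $\dim S\le n-1$ --- such that $M$ deformation retracts onto $S$ via the gradient flow $\Phi^t$ of an appropriate semiconcave function; rescale so that $\diam S\le 1$. Since gradient flows of semiconcave functions preserve extremal subsets, every extremal subset $E$ of $M$ is $\Phi^t$-invariant, so $\Phi^t|_E$ deformation retracts $E$ onto $E':=E\cap S$, and $E'\ne\emptyset$ whenever $E\ne\emptyset$ (flow lines accumulate on $S$ and $E$ is closed and invariant). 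One further needs that $E'$ is an \emph{extremal subset of $S$}; this is a property of the soul going beyond total convexity, to be deduced from the way extremal subsets interact with the gradient flow defining the retraction (cf.\ \cite{PP1}, \cite{K}).

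Granting this, (2) follows at once: $b(E)=b(E')\le C(\dim S,0,1)\le\max_{k\le n-1}C(k,0,1)$ by Theorem~\ref{thm:main}(2). For (1), consider the map $E\mapsto E'=E\cap S$ from the nonempty extremal subsets of $M$ to the extremal subsets of $S$. This map need not be injective --- e.g.\ in a closed half-plane, both the boundary line and the whole space retract onto the single-point soul --- so the task is to bound its fibers uniformly. Given an extremal subset $F$ of $S$, the extremal subsets $E$ of $M$ with $E\cap S=F$ are controlled by the structure of $M$ near the soul: such an $E$ should be determined by its germ transverse to $F$, and the number of admissible germs at a point $q\in S$ is bounded in terms of the finitely many extremal subsets of the space of directions $\Sigma_q M$, hence by $C(n-1,1,\pi)$; a compactness argument over $S$, using that the number of components of $S$ is itself bounded by (2), then bounds the fiber by some $C'(n)$. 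Together with $\#\{\text{extremal subsets of }S\}\le C(\dim S,0,1)$, this yields $\#\{\text{extremal subsets of }M\}\le C(n)$.

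The step I expect to be the main obstacle is precisely this fiber-counting in (1): the retraction to the soul collapses strata (the boundary, and more generally any stratum flowing into a lower-dimensional part of $S$), and bounding how many extremal subsets of $M$ can share a prescribed trace on $S$ requires a careful local analysis of $M$ in a neighborhood of the soul and of how germs of extremal subsets propagate along it. A secondary technical point is to justify that $E\cap S$ is genuinely an extremal subset of $S$ rather than merely a closed subset.
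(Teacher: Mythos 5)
Your reduction of the compact case by rescaling is fine, but the soul-based treatment of the noncompact case has two genuine gaps, both of which you flag yourself and neither of which you close. First, you need $E\cap S$ to be an extremal subset of the soul $S$ in order to apply Theorem \ref{thm:main} to it; extremality is not inherited by intersections with totally convex subsets in general, and the characterization of extremal subsets as the sets invariant under all gradient flows does not transfer directly, since a semiconcave function on $S$ need not extend to one on $M$. Second, and more seriously, part (1) requires bounding the fibers of $E\mapsto E\cap S$, and your argument here is only a heuristic: knowing that the number of extremal subsets of each $\Sigma_q$ is bounded does not by itself bound the number of ways germs along $S$ can be assembled into global extremal subsets of $M$; making this precise would essentially force you to re-run the covering argument of \S\ref{sec:num} on a neighborhood of $S$, at which point the soul is doing no work. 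As written, (1) is not proved.

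The paper's route is different and avoids the soul entirely. The key point (Lemma \ref{lem:nonneg}) is that for noncompact nonnegatively curved $M$ the blow-down limit $\lim_{\lambda\to0}(\lambda M,p)$ is the cone over the ideal boundary, whose distance function from the vertex has no critical points away from the vertex; by lower semicontinuity of $|\nabla\dist_p|$, the function $\dist_p$ therefore has no critical points outside some large ball $B(p,R)$. Lemma \ref{lem:num}(2) then gives $\nu(M)=\nu(B(p,R))$, and the fibration theorem gives that $B(p,R)\cap E\hookrightarrow E$ is a homotopy equivalence; combined with the scale-invariance you already noted (which makes the constants of Theorems \ref{thm:num} and \ref{thm:betti} independent of $D$ for nonnegatively curved spaces), this yields both statements with no fiber-counting at all. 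The annulus statement ``nothing new happens outside $B(p,R)$'' is exactly the information you were trying to extract from the retraction onto the soul, and it is obtained here without any global convexity input.
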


In the same way as Theorem \ref{thm:main}(3), we get a uniform bound (depending on $\varepsilon$) on the number of $\varepsilon$-discrete points in an extremal subset of an Alexandrov space in $\mathcal A(n,\kappa,D)$ (Theorem \ref{thm:dis}).
This holds for the induced intrinsic metric of an extremal subset.
Therefore we obtain the following precompactness theorem.
Let $\mathcal E(n,\kappa,D)$ denote the set of isometry classes of connected extremal subsets of Alexandrov spaces in $\mathcal A(n,\kappa,D)$, equipped with the induced intrinsic metrics.

\begin{cor}\label{cor:precpt}
$\mathcal E(n,\kappa,D)$ is precompact with respect to the Gromov--Hausdorff distance.
\end{cor}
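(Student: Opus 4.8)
The plan is to reduce Corollary~\ref{cor:precpt} to Gromov's precompactness theorem by way of the following companion to Theorem~\ref{thm:main}(3), which I would prove first: there is a function $N=N(n,\kappa,D,\varepsilon)$ such that for every $M\in\mathcal A(n,\kappa,D)$ and every connected extremal subset $E\subset M$ with its induced intrinsic metric $d^E$, any $\varepsilon$-discrete subset of $(E,d^E)$ has at most $N(\varepsilon)$ points. Granting this, $\mathcal E(n,\kappa,D)$ is uniformly totally bounded: each $(E,d^E)$ is a compact length space (connected extremal subsets are compact and path-connected, cf.~\cite{PP1}), so a uniform bound on the size of $\varepsilon$-nets for all $\varepsilon$ is exactly uniform total boundedness once we also have a uniform diameter bound, and the latter follows from the same estimate. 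Indeed, if $x,y\in E$ realize $\diam(E,d^E)$ and $\sigma$ is any path in $E$ from $x$ to $y$, then taking $z_i=\sigma(t_i)$ where $t_i$ is the first time at which $d^E(x,\sigma(t))=i\varepsilon$ produces $\lfloor d^E(x,y)/\varepsilon\rfloor+1$ points with $d^E(z_i,z_j)\ge|i-j|\,\varepsilon\ge\varepsilon$, whence $\diam(E,d^E)\le\varepsilon\,N(\varepsilon)$. Gromov's theorem then gives precompactness, and the limit spaces are compact.

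The main task is the $\varepsilon$-discrete bound, which I would establish by the argument of Theorem~\ref{thm:main}(3), replacing the estimate on $m$-dimensional Hausdorff measure by a count of $\varepsilon$-separated points. Fix $M$ and $E$, and choose an isotopy covering system of $M$ of depth $\le n$ consisting of at most $C_1(n,\kappa,D)$ metric balls~\cite{Y}. On each ball $B(p,r)$ of the system the function $\dist_p$ is regular on the relevant spherical shell, so the gradient exponential map $\gexp_p$ of~\cite{PP2} parametrizes the corresponding portion of $M$ by a bounded part of the tangent cone $T_pM$. Since extremal subsets are invariant under gradient flows, $\gexp_p$ carries the subcone $T_pE\subset T_pM$ onto the corresponding portion of $E$; and since $\gexp_p$ distorts distances by a factor bounded in terms of $\kappa$ and $D$, the $\gexp_p$-preimage of an $\varepsilon$-separated subset of $(E,d^E)$ is a $c\varepsilon$-separated subset of $T_pE$ with $c=c(n,\kappa,D)>0$. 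Now $T_pE$ is the Euclidean cone over the space of directions $\Sigma_pE$, which is an extremal subset of the $(n-1)$-dimensional space $\Sigma_pM$ of curvature $\ge1$ and diameter $\le\pi$; splitting the bounded part of the cone into radial shells and projecting to $\Sigma_pE$ reduces the count to the analogous bound one dimension down. Thus an induction on $n$ (with trivial base case), together with summation over the at most $C_1$ charts of the isotopy covering system, produces $N(\varepsilon)$.

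The point I expect to be delicate is the passage between the ambient metric of $M$ and the induced intrinsic metric $d^E$: the gradient exponential map controls ambient distances directly, whereas controlling $d^E$ requires that nearby points of $E$ be joined by short paths lying in $E$ — which is supplied by the invariance of $E$ under the gradient flow of $\dist_p$, the connecting curves being reparametrized gradient curves whose lengths obey the same concavity estimates. A second delicate point is uniformity of all the constants over $\mathcal E(n,\kappa,D)$: this is precisely why a single $\gexp_p$-chart does not suffice and the isotopy covering system is needed, since an extremal subset sitting inside a thin or collapsing Alexandrov space cannot be covered by a controlled number of ordinary exponential charts.
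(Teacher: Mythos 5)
Your proposal takes essentially the same route as the paper: the paper establishes exactly your companion estimate as Theorem \ref{thm:dis} (namely $N_\varepsilon(B(p,D)\cap E,d_E)\le C(n,D)/\varepsilon^m$), proved by rerunning the argument of Theorem \ref{thm:vol} with $\varepsilon^m N_\varepsilon(\,\cdot\,,d_E)$ in place of $\vol_m$, and then invokes Gromov's precompactness theorem. Your observations about the intrinsic-versus-ambient metric issue and the uniform diameter bound correctly supply details the paper leaves implicit.
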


In other words, any sequence of extremal subsets in $\mathcal E(n,\kappa,D)$ has a convergent subsequence.
A natural question is: what is the limit space?
It is known that the limit is also an extremal subset with the induced intrinsic metric, provided that the sequence of the ambient Alexandrov spaces does not collapse.
More precisely, if Alexandrov spaces $M_i$ converge to $M$ without collapse and extremal subsets $E_i\subset M_i$ converge to $E\subset M$ as subsets, then $E$ is extremal (\cite[4.1.3]{Pet2}) and the induced intrinsic metrics of $E_i$ converge to that of $E$ (\cite[1.2]{Pet1}).
On the other hand, it is unknown what the limit space is when the ambient spaces collapse.

Finally, we remark that Yamaguchi's paper \cite{Y} is still unpublished, but at least this is not due to any mistake.
In fact we review his arguments in detail and provide self-contained proofs.
This paper can be read without consulting \cite{Y}.

\begin{org}
In \S\ref{sec:pre}, we review the basics of Alexandrov spaces and extremal subsets.
We also recall several notions related to the gradient-exponential map used in \S\ref{sec:vol}.
In \S\ref{sec:ess}, we define essential coverings and isotopy covering systems and review the main results of \cite{Y}.
In \S\ref{sec:num}, we prove the uniform boundedness of the numbers of extremal subsets in Alexandrov spaces (Theorem \ref{thm:main}(1) and Corollary \ref{cor:nonneg}(1)).
In \S\ref{sec:betti}, we prove the uniform boundedness of the Betti numbers of extremal subsets (Theorem \ref{thm:main}(2) and Corollary \ref{cor:nonneg}(2)).
In \S\ref{sec:vol}, we prove the uniform boundedness of the volumes of extremal subsets (Theorem \ref{thm:main}(3) and Corollary \ref{cor:precpt}).
\end{org}

\begin{ack}
I would like to thank Professor Takao Yamaguchi for his advice and encouragement.
This paper (and \cite{F}) are based on my master's thesis.
I am also grateful to the referees for their valuable comments.
This work was partially supported by JSPS KAKENHI Grant Number 22KJ2099.
\end{ack}

\section{Preliminaries}\label{sec:pre}

The distance between $p$ and $q$ is denoted by $|pq|$ or $d(p,q)$.
We denote by $B(p,r)$ and $\bar B(p,r)$ the open and closed metric ball of radius $r$ centered at $p$, respectively.
For $0<r_1<r_2$, $A(p;r_1,r_2)$ denotes the closed metric annulus $\bar B(p,r_2)\setminus B(p,r_1)$.
For a metric space $(X,d)$ and $\lambda>0$, $\lambda X$ denotes the rescaled space $(X,\lambda d)$.

\subsection{Alexandrov spaces}\label{sec:alex}
Here we review the basics of Alexandrov spaces.
We refer to \cite{BGP}, \cite{BBI}, and \cite{AKP} for details.

A \textit{geodesic space} is a metric space such that every two points can be joined by a minimal geodesic.
We assume that every minimal geodesic is parametrized by arclength.
For $\kappa\in\mathbb R$, the \textit{$\kappa$-plane} is the complete, simply-connected surface of constant curvature $\kappa$.
For three points $p$, $q$, and $r$ in a geodesic space, we consider a geodesic triangle on the $\kappa$-plane with side lengths $|pq|$, $|pr|$, and $|qr|$.
We denote by $\tilde\angle qpr$ the angle opposite to $|qr|$ and call it the \textit{comparison angle} at $p$.

A complete geodesic space $M$ is called an \textit{Alexandrov space} with curvature $\ge\kappa$ if every point has a neighborhood $U$ satisfying the following: any two minimal geodesics $\gamma$ and $\sigma$ in $U$ starting at the same point $p$,  the comparison angle $\tilde\angle\gamma(t)p\sigma(s)$ is nonincreasing in both $t$ and $s$.
In this paper, we only deal with finite-dimensional Alexandrov spaces in the sense of Hausdorff dimension.
The Hausdorff dimension of an Alexandrov space is equal to the topological dimension.
From now, $M$ denotes an $n$-dimensional Alexandrov space.

For any two minimal geodesics $\gamma$ and $\sigma$ starting at $p\in M$, one can define their \textit{angle} $\angle(\gamma,\sigma):=\lim_{t,s\to0}\tilde\angle\gamma(t)p\sigma(s)$.
The angle $\angle$ is a pseudo-distance on the space $\Gamma_p$ consisting of all minimal geodesics starting at $p$. 
The completion of the metric space induced from $(\Gamma_p,\angle)$ is called the \textit{space of directions} at $p$ and denoted by $\Sigma_p$.
$\Sigma_p$ is a compact $(n-1)$-dimensional Alexandrov space with curvature $\ge1$.
The Euclidean cone $K(\Sigma_p)$ over $\Sigma_p$ is called the \textit{tangent cone} at $p$ and denoted by $T_p$.
$(T_p,o)$ is isometric to the pointed Gromov--Hausdorff limit $\lim_{\lambda\to\infty}(\lambda M,p)$, where $o$ denotes the vertex of the cone.
$T_p$ is an $n$-dimensional Alexandrov space of nonnegative curvature.

For $p,q\in M$, we denote by $q'_p\in\Sigma_p$ one of the directions of minimal geodesics from $p$ to $q$.
Similarly, for a closed subset $A\subset M$, we denote by $A'_p\subset\Sigma_p$ the set of all directions of minimal geodesics from $p$ to $A$.

For notational simplicity, here we assume that the lower curvature bound is $-1$.
Let $\mathcal A(n)$ denote the set of isometry classes of $n$-dimensional Alexandrov spaces with curvature $\ge-1$, and $\mathcal A(n,D)$ its restriction to all elements with diameter $\le D$. 
Similarly, let $\mathcal A_p(n)$ denote the set of isometry classes of $n$-dimensional pointed Alexandrov spaces with curvature $\ge-1$.
The following property is the starting point of this paper.

\begin{thm}[{\cite[\S8]{BGP}}]\label{thm:precpt}
$\mathcal A(n,D)$ (resp.\ $\mathcal A_p(n)$) is precompact with respect to the Gromov--Hausdorff topology (resp.\ the pointed Gromov--Hausdorff topology), i.e., any sequence has a convergent subsequence.
The limit is an Alexandrov space with dimension $\le n$ and curvature $\ge-1$.
\end{thm}

\subsection{Extremal subsets}\label{sec:ex}
Here we review the basics of extremal subsets.
We refer to \cite{PP1}, \cite[\S4]{Pet2}, and \cite{F} for details.

Let $M$ be an Alexandrov space.
We denote by $\dist_q$ the distance function $d(q,\,\cdot\,)$ from $q\in M$.
A point $p\in M\setminus\{q\}$ is called a \textit{critical point} of $\dist_q$ if
\[\min_{pq}\angle(q'_p,\xi)\le\pi/2\]
for any $\xi\in\Sigma_p$, where $pq$ runs over all minimal geodesics from $p$ to $q$.
By the first variation formula, this is equivalent to $d_p\dist_q(\xi)\le 0$ (see \S\ref{sec:semi} for the differential).

\begin{dfn}\label{dfn:ex}
A closed subset $E$ of an Alexandrov space $M$ is said to be \textit{extremal} if the following condition is satisfied:
\begin{itemize}
\item[($*$)] If $\dist_q|_E$ has a local minimum at $p\in E$, where $q\in M\setminus E$, then $p$ is a critical point of $\dist_q$.
\end{itemize}
Note that $\emptyset$ and $M$ are regarded as extremal subsets of $M$.

Moreover, if $M$ has curvature $\ge1$, the following additional condition is imposed: if $E$ is empty or a singleton $\{p\}$, then we require $\diam M\le\pi/2$ or $M\subset\bar B(p,\pi/2)$, respectively.
This definition is used only for the space of directions and is stated explicitly when used.
\end{dfn}

\begin{ex}\label{ex:ex}
Let $M$ be an Alexandrov space.
\begin{enumerate}
\item A singleton $\{p\}\subset M$ is extremal (in the sense of the standard definition ($*$)) if and only if $\diam\Sigma_p\le\pi/2$.
It is called an \textit{extremal point}.
\item (\cite[1.2]{PP1}) Any Alexandrov space admits a \textit{canonical stratification}:
there exists a sequence of closed subsets uniquely determined by the topological structure of $M$,
\[M=M_n\supset M_{n-1}\supset\dots\supset M_0\supset M_{-1}=\emptyset,\]
where $n=\dim M$, such that the $k$-dimensional stratum $M^{(k)}=M_k\setminus M_{k-1}$ is a $k$-dimensional topological manifold if nonempty.
Roughly speaking, $M^{(k)}$ consists of points where the conical neighborhood topologically splits off $\mathbb R^k$ but not $\mathbb R^{k+1}$ (see \cite{Per2} for the precise definition).
Then the closure of each stratum is an extremal subset.
In particular, the boundary of an Alexandrov space (the closure of $M^{(n-1)}$) is an extremal subset.
\item (\cite[4.2]{PP1}) If a compact group $G$ acts on $M$ isometrically, the quotient space $M/G$ is also an Alexandrov space with the same lower curvature bound.
For a closed subgroup $H$ of $G$, the projection of the fixed point set of $H$ to $M/G$ is an extremal subset.
\end{enumerate}
\end{ex}

For an extremal subset $E\subset M$ and $p\in E$, the \textit{space of directions} $\Sigma_pE$ of $E$ at $p$ is defined as the subset of $\Sigma_p$ consisting of all limit directions $\lim_{i\to\infty}(p_i)'_p$, where $p_i\in E\setminus\{p\}$ converges to $p$.
$\Sigma_pE$ is an extremal subset of $\Sigma_p$ (regarded as a space of curvature $\ge1$; see Definition \ref{dfn:ex}).
The subcone $K(\Sigma_pE)$ of $T_p=K(\Sigma_p)$ is called the \textit{tangent cone of $E$} at $p$ and denoted by $T_pE$ (if $\Sigma_pE=\emptyset$, we set $K(\emptyset)=\{o\}$).
$T_pE$ is isometric to the limit $\lim_{\lambda\to\infty}(\lambda E,p)$ under the convergence $(\lambda M,p)\xrightarrow{\mathrm{GH}}(T_p,o)$.
$T_pE$ is an extremal subset of $T_p$.

The union, intersection, and closure of the difference of two extremal subsets are also extremal subsets.
For example, if $E$ and $F$ are extremal subsets of $M$, then $\overline{E\setminus F}$ is an extremal subset of $M$.
Moreover, it holds that $\Sigma_p(\overline{E\setminus F})=\overline{\Sigma_pE\setminus \Sigma_pF}$ and $T_p(\overline{E\setminus F})=\overline{T_pE\setminus T_pF}$ for any $p\in\overline{E\setminus F}$.

The Hausdorff dimension of an extremal subset is equal to the topological dimension.
As in the case of an Alexandrov space, any extremal subset admits a canonical stratification uniquely determined by its topological structure.
The closure of each stratum is also an extremal subset (compare Example \ref{ex:ex}(2)).

\subsection{Semiconcave functions, gradient curves, and radial curves}\label{sec:semi}
Here we recall several notions related to the gradient-exponential map, which will be used in \S\ref{sec:vol}.
Since these will not be used in the other sections, the reader is recommended to skip this section until they read \S\ref{sec:vol}.
We refer to \cite[\S1--3]{Pet2}, \cite[\S3]{PP2}, and \cite[Ch.16]{AKP} for details.

\subsubsection*{Semiconcave functions}
Let $M$ be an Alexandrov space and $\Omega$ an open subset of $M$.
Suppose $M$ has no boundary.
For $\mu\in\mathbb R$, a (locally Lipschitz) function $f:\Omega\to\mathbb R$ is said to be \textit{$\mu$-concave} if for any minimal geodesic $\gamma(t)$ parametrized by arclength, $f\circ\gamma(t)-(\mu/2)t^2$ is concave.
When $M$ has nonempty boundary, $f$ is said to be \textit{$\mu$-concave} if its tautological extension to the double of $M$ is $\mu$-concave in the above sense.
A function $f$ is said to be \textit{semiconcave} if for any $p\in\Omega$, there exists $\mu_p\in\mathbb R$ such that $f$ is $\mu_p$-concave in some neighborhood of $p$.
For example, the distance function $\dist_q$ from $q\in M$ is semiconcave on $M\setminus\{q\}$.

For a semiconcave function $f:\Omega\to\mathbb R$ and $p\in\Omega$, its \textit{differential} $d_pf:T_p\to\mathbb R$ at $p$ is defined by $d_pf:=\lim_{\lambda\to\infty}\lambda(f-f(p))$, where $\lambda(f-f(p))$ is defined on $\lambda M$ and the limit is taken under the convergence 
$(\lambda M,p)\xrightarrow{\mathrm{GH}}(T_p,o)$.
We then define the \textit{gradient} $\nabla_pf\in T_p$ of $f$ at $p$ by
\begin{equation*}
\nabla_pf:=
\begin{cases}
d_pf(\xi_{\max})\xi_{\max}&\text{if $\max d_pf|_{\Sigma_p}>0$,}\\
\hfil o&\text{if $\max d_pf|_{\Sigma_p}\le0$.}
\end{cases}
\end{equation*}
where $\xi_{\max}\in\Sigma_p$ is a unique maximum point of $d_pf|_{\Sigma_p}$.
A point $p\in\Omega$ is called a \textit{critical point} of $f$ if $\nabla_pf=o$.
For the distance function $\dist_q$, this coincides with the definition of critical point in \S\ref{sec:ex}.

\subsubsection*{Gradient curves}
For a semiconcave function $f:\Omega\to\mathbb R$, a curve $\alpha(t)$ in $\Omega$ satisfying
\[\alpha^+(t)=\nabla_{\alpha(t)}f\]
is called a \textit{gradient curve} of $f$.
Here $\alpha^+(t)$ denotes the right tangent vector of $\alpha$ defined as the limit $\lim_{\lambda\to\infty}\alpha(t+\lambda^{-1})$ under the convergence $(\lambda M,p)\xrightarrow{\mathrm{GH}}(T_p,o)$ (the above definition assumes that it exists).
If $f$ is a $\mu$-concave function defined on $M$, then for any $p\in M$ there exists a unique gradient curve $\alpha_p:[0,\infty)\to M$ of $f$ with $\alpha_p(0)=p$.
We define the \textit{gradient flow} $\Phi_f^t:M\to M$ of $f$ by
\[\Phi_f^t(p):=\alpha_p(t)\]
for $p\in M$ and $t\ge0$.
(For a general semiconcave function, its gradient flow is not necessarily defined for all $p\in M$ and $t\ge 0$.)

\subsubsection*{Radial curves}
In what follows, we assume that the lower curvature bound is $-1$.
For $p\in M$ and $\xi\in\Sigma_p$, we consider a curve $\beta_\xi:[0,\infty)\to M$ satisfying the following differential equation:
\begin{equation}\label{eq:rad}
\begin{gathered}
\beta_\xi^+(s)=\frac{\tanh|p\beta_\xi(s)|}{\tanh s}\nabla_{\beta_\xi(s)}\dist_p,\\
\beta_\xi(0)=p,\quad\beta_\xi^+(0)=\xi.
\end{gathered}
\end{equation}
We call it the \textit{radial curve} starting at $p$ in the direction $\xi$.
For any initial data $(p,\xi)$, there exists a unique radial curve.
If there is a minimal geodesic starting in the direction $\xi$, then it coincides with the radial curve $\beta_\xi$.

In fact, radial curves starting at $p$ are reparametrizations of gradient curves of a semiconcave function $f=\cosh\circ\dist_p-1$.
Let $\alpha(t)$ be the gradient curve of $f$ starting at $x\in M\setminus\{p\}$ and $\beta(s)$ the radial curve starting at $p$ in the direction $x'_p$.
Then $\alpha(t)=\beta(s)$ for $t\ge0$ and $s\ge|px|$, where the relation between the two parameters is given by
\begin{equation}\label{eq:re}
\frac{dt}{ds}=\frac1{\tanh s\cosh|p\beta(s)|}.
\end{equation}
Note that $\beta(s)$ is well-defined for $s\ge|px|$, independent of the choice of $x'_p$.

We next explain two comparison properties of radial curves.
To do this, we define two comparison angles for $1$-Lipschitz curves.
For a $1$-Lipschitz curve $c$ and a point $p$, we consider a geodesic triangle on the $\kappa$-plane with side lengths $|pc(t_1)|$, $|t_2-t_1|$, and $|pc(t_2)|$.
We denote by $\tilde\angle pc(t_1)\!\smile\!c(t_2)$ the angle opposite to $|pc(t_2)|$.
Similarly, for $1$-Lipschitz curves $c_1$ and $c_2$ with $c_1(0)=c_2(0)=p$, we consider a geodesic triangle on the $\kappa$-plane with side lengths $|t_1|$, $|t_2|$, and $|c_1(t_1)c_2(t_2)|$.
We denote by $\tilde\angle c_1(t_1)\!\smile\!p\!\smile\!c_2(t_2)$ the angle opposite to $|c_1(t_1)c_2(t_2)|$.
In case such a triangle does not exist, we define the comparison angle to be $0$.

\begin{prop}[{\cite[3.3, 3.3.3]{PP2}}]\label{prop:rad}
Let $M$ be an Alexandrov space with curvature $\ge-1$ and $p\in M$.
\begin{enumerate}
\item For a radial curve $\beta_\xi$ starting at $p$ in the direction $\xi\in\Sigma_p$ and $q\in M$, the comparison angle $\tilde\angle qp\!\smile\!\beta_\xi(s)$ is nonincreasing in $s$.
In particular,
\[\tilde\angle qp\!\smile\!\beta_\xi(s)\le\min_{pq}\angle(q'_p,\xi),\]
where the minimum is taken over all minimal geodesics from $p$ to $q$.
\item For two radial curves $\beta_1$ and $\beta_2$ starting at $p$ such that $\beta_1|_{[0,a_1]}$ and $\beta_2|_{[0,a_2]}$ are minimal geodesics, we have
\[\tilde\angle\beta_1(s_1)\!\smile\!p\!\smile\!\beta_2(s_2)\le\tilde\angle\beta_1(a_1)p\beta_2(a_2)\]
whenever $s_1\ge a_1$ and $s_2\ge a_2$.
\end{enumerate}
\end{prop}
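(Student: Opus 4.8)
The plan is to deduce both parts from a single pointwise estimate for the differential of a distance function in a gradient direction, together with the first variation formula and a short hyperbolic trigonometry computation. For (1), fix $q\in M$ and write $r(s)=|p\beta_\xi(s)|$, $\rho(s)=|q\beta_\xi(s)|$; since $\beta_\xi$ is $1$-Lipschitz we have $r(s)\le s$. As $\kappa=-1$, whenever the relevant comparison triangle is nondegenerate the comparison angle is determined by the hyperbolic law of cosines
\[\cosh\rho(s)=\cosh|pq|\cosh s-\sinh|pq|\sinh s\cos\tilde\angle qp\!\smile\!\beta_\xi(s),\]
and $s\mapsto\rho(s)$ is Lipschitz. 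Differentiating this identity, one sees that $\tilde\angle qp\!\smile\!\beta_\xi(s)$ is nonincreasing in $s$ exactly when, at every point of differentiability where the triangle is nondegenerate,
\[\rho'(s)\le\frac{\cosh\rho(s)\cosh s-\cosh|pq|}{\sinh\rho(s)\sinh s},\]
the right-hand side being $\cos$ of the comparison angle at $\beta_\xi(s)$ of the triangle with side lengths $|pq|$, $s$, $\rho(s)$. At parameters where $\beta_\xi$ meets a critical point of $\dist_p$ one has $\nabla\dist_p=o$ and $\beta_\xi$ is locally constant, so the bound is trivial; lower semicontinuity of $|\nabla\dist_p|$ and the convention that the comparison angle equals $0$ once the triangle degenerates handle the remaining bookkeeping.

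The main input is a pointwise gradient estimate: for every $x\in M$,
\[d_x\dist_q(\nabla_x\dist_p)\le\cos\tilde\angle qxp.\]
If $\nabla_x\dist_p=o$ this is immediate, since then $x$ is critical for $\dist_p$ and $\tilde\angle qxp\le\pi/2$. Otherwise write $\nabla_x\dist_p=|\nabla_x\dist_p|\,\eta$ with $\eta\in\Sigma_x$ the maximum direction of $d_x\dist_p|_{\Sigma_x}$. For any direction $q'_x$ of a minimal geodesic $xq$, the supergradient property (see \cite[\S1]{Pet2}, \cite{AKP}) gives $|\nabla_x\dist_p|\cos\angle(\eta,q'_x)\ge d_x\dist_p(q'_x)=-\cos\min_{xp}\angle(p'_x,q'_x)$, and the angle comparison in $M$ gives $\min_{xp}\angle(p'_x,q'_x)\ge\tilde\angle pxq$; hence $-\cos\angle(\eta,q'_x)\le\cos\tilde\angle pxq/|\nabla_x\dist_p|$. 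Taking the minimum over $q'_x$ in $d_x\dist_q(\eta)=\min_{xq}\bigl(-\cos\angle(q'_x,\eta)\bigr)$ and multiplying back by $|\nabla_x\dist_p|>0$ yields the estimate.

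To finish (1), the first variation formula together with \eqref{eq:rad} and positive homogeneity of differentials gives
\[\rho^+(s)=d_{\beta_\xi(s)}\dist_q\bigl(\beta_\xi^+(s)\bigr)=\frac{\tanh r(s)}{\tanh s}\,d_{\beta_\xi(s)}\dist_q\bigl(\nabla_{\beta_\xi(s)}\dist_p\bigr)\le\frac{\tanh r(s)}{\tanh s}\cdot\frac{\cosh r(s)\cosh\rho(s)-\cosh|pq|}{\sinh r(s)\sinh\rho(s)},\]
and an elementary manipulation shows that this last quantity is $\le(\cosh\rho(s)\cosh s-\cosh|pq|)/(\sinh\rho(s)\sinh s)$ if and only if $\cosh r(s)\le\cosh s$, i.e.\ if and only if $r(s)\le s$ — which holds. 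By the first paragraph this proves that $\tilde\angle qp\!\smile\!\beta_\xi(s)$ is nonincreasing. Finally, letting $s\to0^+$ and using the first variation of $\dist_q$ at $p$ (so that $\rho(s)=|pq|-s\cos\min_{pq}\angle(q'_p,\xi)+o(s)$) identifies $\lim_{s\to0^+}\tilde\angle qp\!\smile\!\beta_\xi(s)=\min_{pq}\angle(q'_p,\xi)$, which with monotonicity gives the ``in particular'' assertion.

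For (2) put $\theta_0=\tilde\angle\beta_1(a_1)p\beta_2(a_2)$. By the law of cosines the claim is equivalent to $\cosh|\beta_1(s_1)\beta_2(s_2)|\le\cosh s_1\cosh s_2-\sinh s_1\sinh s_2\cos\theta_0$ for all $s_1\ge a_1$, $s_2\ge a_2$. On the boundary $\{s_1=a_1\}\cup\{s_2=a_2\}$ this is precisely part (1) applied to one curve with the other endpoint on its geodesic part: for instance, applying (1) to $\beta_2$ with the fixed point $\beta_1(a_1)$ and using $|p\beta_1(a_1)|=a_1$, $|p\beta_2(a_2)|=a_2$ gives $\tilde\angle\beta_1(a_1)p\!\smile\!\beta_2(s_2)\le\theta_0$, which is the inequality at $s_1=a_1$. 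For the interior I would rerun the scheme above with both endpoints moving, bounding $\cosh|\beta_1(s_1)\beta_2(s_2)|$ by the hyperbolic model via the key estimate in each variable separately and assembling the resulting one-variable bounds into the two-variable one. I expect that assembly to be the hard part: the bounds produced by (1) are expressed in terms of the true distances $|p\beta_i(s_i)|$ rather than the curve parameters $s_i$ (these agree only on the geodesic parts), and one must track this discrepancy — together with the degenerate comparison triangles, where the comparison angle is $0$ by convention — in order to propagate the inequality from the edges to the interior.
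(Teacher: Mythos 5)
This proposition is quoted from \cite[3.3, 3.3.3]{PP2}; the paper contains no proof of it, so your attempt has to stand on its own. Your argument for part (1) is essentially the standard one and is sound: the pointwise estimate $d_x\dist_q(\nabla_x\dist_p)\le\cos\tilde\angle qxp$ is correct (it is the usual consequence of $\langle\nabla_xf,v\rangle\ge d_xf(v)$ plus angle comparison), and combining it with the reparametrization factor $\tanh|p\beta_\xi(s)|/\tanh s$ from \eqref{eq:rad} does reduce the monotonicity of $\tilde\angle qp\!\smile\!\beta_\xi(s)$ exactly to $|p\beta_\xi(s)|\le s$, as you computed. The degenerate-triangle bookkeeping you defer is genuinely routine.

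Part (2), however, has a real gap, and it is precisely the one you flag at the end: you only establish the inequality on the boundary $\{s_1=a_1\}\cup\{s_2=a_2\}$, and the proposed propagation to the interior does not work. Concretely, if you freeze $\beta_1(s_1)$ and apply (1) to $\beta_2$, the resulting bound has $|p\beta_1(s_1)|$ (the true distance $d_1$) in the slot where the target inequality needs the parameter $s_1$; after substituting the boundary estimate, the coefficient multiplying $\cosh d_1$ is $-\sinh(s_2-a_2)/\sinh a_2\le0$, so the inequality you need at the last step is $\cosh d_1\ge\cosh s_1$ --- the reverse of the only available bound $d_1\le s_1$. The same obstruction appears if you chain in the other order. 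So the two-variable statement is not a formal consequence of two applications of the one-variable statement; in \cite{PP2} it is proved by a genuinely simultaneous argument, running both curves as gradient curves of the common function $f=\cosh\circ\dist_p-1$ and using the distance-expansion estimate for the gradient flow of a function satisfying $f''\le 1+f$, for which the elliptic cone $(T_p,\mathfrak h)$ is the equality model. Without some version of that two-variable mechanism (or a discretization into broken geodesics with a careful induction), your proof of (2) is incomplete.
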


Now we define the \textit{gradient-exponential map} $\gexp_p:T_p\to M$ at $p\in M$ by
\[\gexp_p(s\xi):=\beta_\xi(s),\]
where $\xi\in\Sigma_p$ and $s\ge 0$.
Clearly $\gexp_p$ is an extension of the usual exponential map $\mathrm{exp}_p$ defined by minimal geodesics.
By Proposition \ref{prop:rad}(2) above, $\gexp_p$ is a $1$-Lipschitz map from $(T_p,\mathfrak h)$ to $M$, where $\mathfrak h$ denotes the metric on $T_p$ defined by the hyperbolic law of cosines instead of the Euclidean one (i.e., the elliptic cone over $\Sigma_p$ in the sense of \cite[4.3.2]{BGP}).

Finally, we discuss the relationship between extremal subsets and gradient/radial curves.
Let $E$ be an extremal subset of $M$.
For any semiconcave function $f$ on $M$, the gradient curve of $f$ starting at a point of $E$ remains in $E$, that is, $\Phi_f^t(E)\subset E$ (conversely, a subset with this property is extremal).
In particular, for radial curves, we have the following: if $p\in E$, then $\gexp_p(T_pE)\subset E$.

\begin{rem}\label{rem:rad}
When the lower curvature bound is $-1$, there is another definition of radial curve.
Namely, we can replace the differential equation \eqref{eq:rad} by a simpler (and slower) one
\[\beta_\xi^+(s)=\frac{\sinh|p\beta_\xi(s)|}{\sinh s}\nabla_{\beta_\xi(s)}\dist_p.\]
Then Proposition \ref{prop:rad} also holds for this curve.
See \cite[p.246]{AKP} or the printed version of \cite[\S3.2]{Pet2} (not the arXiv version).
However, we need the faster one \eqref{eq:rad} for our application.
\end{rem}

\section{Essential coverings and isotopy covering systems}\label{sec:ess}

In this section, we review the main results of Yamaguchi \cite{Y}. 

From now on, the lower curvature bound is assumed to be $-1$ (the general case is obtained by rescaling).
Recall that $\mathcal A_p(n)$ denotes the set of pointed $n$-dimensional Alexandrov spaces with curvature $\ge -1$.
For a point $p$ and $0<r_1<r_2$, $A(p;r_1,r_2)$ denotes the closed metric annulus $\bar B(p,r_2)\setminus B(p,r_1)$.
For a metric space $(X,d)$ and $\lambda>0$, $\lambda X$ denotes the rescaled space $(X,\lambda d)$.

The following rescaling theorem plays a key role throughout the paper.

\begin{thm}[{\cite[3.2]{Y}}]\label{thm:res}
Suppose $(M_i,p_i)\in\mathcal A_p(n)$ converges to an Alexandrov space $(X,p)$ with dimension $\ge1$.
Then for sufficiently small $r>0$, there exists $\hat p_i\in M_i$ converging to $p$ such that either (1) or (2) holds:
\begin{enumerate}
\item There is a subsequence $\{j\}\subset\{i\}$ such that $\dist_{\hat p_j}$ has no critical points on $\bar B(\hat p_j,r)\setminus\{\hat p_j\}$.
\item There exists a sequence $\delta_i\to0$ such that
\begin{itemize}
\item[(i)] for any $\lambda>1$ and sufficiently large $i$, $\dist_{\hat p_i}$ has no critical points on $A(\hat p_i;\lambda\delta_i,r)$;
\item[(ii)] for any limit $(Y,y_0)$ of  a subsequence of $(\frac1{\delta_i}M_i,\hat p_i)$, we have
\[\dim Y\ge\dim X+1.\]
\end{itemize}
\end{enumerate}
In particular, if $\dim X=n$, then (1) holds for all sufficiently large $i$.
\end{thm}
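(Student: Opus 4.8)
The plan is to take $\hat p_i=p_i$ and to distinguish the two alternatives according to whether or not critical points of $\dist_{p_i}$ accumulate at $p_i$. The key preliminary observation concerns only the \emph{limit} space: for every point $p$ of an Alexandrov space $X$ there is $r_0>0$ such that $\dist_p$ has no critical points on $\bar B(p,r_0)\setminus\{p\}$. Indeed, suppose $x_j\to p$ were critical points of $\dist_p$ with $t_j:=|px_j|\to0$. Rescaling by $1/t_j$ and passing to the limit $(\tfrac1{t_j}X,p)\to(T_p,o)$, the points $x_j$ lie at distance $1$ and remain critical for $\dist_p$ (criticality is scale invariant), so by lower semicontinuity of $|\nabla f|$ the limit of critical points is critical, hence we obtain a critical point of $\dist_o$ on $T_p$ at distance $1$ from $o$. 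But $T_p=K(\Sigma_p)$ is a Euclidean cone, on which $\nabla_x\dist_o$ is the unit radial direction for every $x\ne o$; so $\dist_o$ has no critical points on $T_p\setminus\{o\}$, a contradiction.

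Now fix $r<r_0$ and set $\hat p_i=p_i$. If infinitely many indices $i$ satisfy that $\dist_{p_i}$ has no critical points on $\bar B(p_i,r)\setminus\{p_i\}$, these indices furnish the subsequence required in alternative (1). Otherwise $\dist_{p_i}$ has a critical point in $\bar B(p_i,r)\setminus\{p_i\}$ for all large $i$, and we put $\delta_i:=\sup\{\,|p_ix|: x\in\bar B(p_i,r)\setminus\{p_i\},\ \nabla_x\dist_{p_i}=o\,\}$. By the preliminary observation and stability of criticality under Gromov--Hausdorff convergence, no such critical point can stay a fixed distance from $p_i$ (its limit would be a critical point of $\dist_p$ lying in $\bar B(p,r)\setminus\{p\}\subset\bar B(p,r_0)\setminus\{p\}$), so $\delta_i\to0$. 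Property (2)(i) is then immediate: all critical points of $\dist_{p_i}$ in $\bar B(p_i,r)\setminus\{p_i\}$ lie within distance $\delta_i$ of $p_i$, hence $\dist_{p_i}$ has no critical points on $A(p_i;\lambda\delta_i,r)$ for any $\lambda>1$.

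Verifying (2)(ii) is the heart of the matter. Let $(Y,y_0)$ be a limit of a subsequence of $(\tfrac1{\delta_i}M_i,p_i)$, which exists by precompactness (Theorem~\ref{thm:precpt}); since $\tfrac1{\delta_i}M_i$ has curvature $\ge-\delta_i^2\to0$, $Y$ has curvature $\ge0$, and $\dim Y\ge\dim X$ because blowing up cannot decrease dimension (a configuration of points witnessing $\dim X$ near $p$ persists in the $M_i$ and, after rescaling, survives at scales $\to\infty$ in $Y$). Since $\delta_i$ is a supremum there are critical points $x_i$ of $\dist_{p_i}$ with $|p_ix_i|/\delta_i\to1$; in $\tfrac1{\delta_i}M_i$ they sit at distance $\to1$ from $p_i$, so their limit is a critical point of $\dist_{y_0}$ on $Y$ at distance $1$ from $y_0$. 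The plan is now to rule out $\dim Y=\dim X$: if blowing up at scale $\delta_i$ does not increase the dimension beyond that of the collapse $M_i\to X$, then $Y$ must be isometric to the tangent cone $T_p$ of $X$ at $p$, with $y_0$ the apex; but then $\dist_{y_0}$ has no critical points away from $y_0$ (Euclidean cone again), contradicting the critical point just produced. Hence $\dim Y\ge\dim X+1$, i.e.\ (2)(ii). Finally, if $\dim X=n$ then alternative (2) is impossible, since $Y$ is a limit of $n$-dimensional Alexandrov spaces and $\dim Y\ge n+1$ is absurd; consequently only finitely many $i$ can have a critical point of $\dist_{p_i}$ on $\bar B(p_i,r)\setminus\{p_i\}$ (otherwise the construction above, applied to those indices, would produce such a $Y$), so $\dist_{p_i}$ is free of critical points there for all sufficiently large $i$, which is the asserted strengthening.

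The main obstacle is the implication ``$\dim Y=\dim X\ \Rightarrow\ Y\cong T_p$'' used above: this requires a careful rescaling argument relating the blow-up scale $\delta_i$ to the (a priori unquantified) rate of the convergence $M_i\to X$ near $p_i$, combined with a Bishop--Gromov-type volume rigidity to conclude that a dimension-preserving blow-up of a collapsing sequence reproduces the tangent cone of the limit. All the remaining ingredients are standard: precompactness, scale invariance of the notion of critical point, and lower semicontinuity of $|\nabla f|$ under Gromov--Hausdorff convergence.
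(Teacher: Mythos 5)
There is a genuine gap, and it sits exactly where you located ``the heart of the matter.'' Your choice $\hat p_i=p_i$ cannot work: the whole content of the theorem is that the base point must be allowed to move. Concretely, take the collapse $(K(S^1_{\varepsilon_i}),o)\to(\mathbb R_+,0)$ of Example \ref{ex:res} and let $p_i$ be a point at distance $\eta_i$ from the vertex $o$, with $\varepsilon_i\ll\eta_i\to0$, so that $p_i\to p=0$. The vertex $o$ is a critical point of $\dist_{p_i}$ (since $\diam\Sigma_o=\varepsilon_i/2<\pi/2$), and away from $o$ the cone is flat with only two, nearly parallel, geodesics back to $p_i$, so $o$ is the \emph{only} critical point in $\bar B(p_i,r)\setminus\{p_i\}$. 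Hence alternative (1) fails for $\hat p_i=p_i$, your $\delta_i$ equals $\eta_i$, and the blow-up $(\tfrac1{\eta_i}K(S^1_{\varepsilon_i}),p_i)=(K(S^1_{\varepsilon_i}),p_i')$ converges to $(\mathbb R_+,1)$, which has $\dim Y=1=\dim X$. So (2)(ii) also fails for $\hat p_i=p_i$, while the theorem is saved by choosing $\hat p_i=o$, for which (1) holds. The same example refutes the implication you flag as the remaining obstacle: a dimension-preserving blow-up need not be the tangent cone $T_p$ based at the apex (here it is $\mathbb R_+$ based at an interior point, and $\dist_{y_0}$ does have a critical point at distance $1$), so no volume-rigidity argument can close this route.

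The paper's proof (given for the strengthened Theorem \ref{thm:res2}, following Yamaguchi) handles precisely this issue by \emph{constructing} $\hat p_i$: one fixes nets $\{x_{\alpha\beta}\}$ on $\partial B(p,2r)\subset X$, forms the functions $f^i=\min_\alpha\frac1{N_\alpha}\sum_\beta\dist_{x_{\alpha\beta}^i}$ on $M_i$, and takes $\hat p_i$ to be a maximum point of $f^i$ on $\bar B(p_i,r)$. Maximality yields the first-variation inequality $0\ge(f^i)'_{\hat p_i}(v_i)$ along the direction $v_i$ to the farthest bad point $\hat q_i$; combined with criticality of $\hat q_i$ this pins the limit directions $v_{\alpha\beta}\in\Sigma_{y_0}$ into an $\varepsilon$-neighborhood of the equator of $v$, and then two packing estimates (Bishop--Gromov in $X$ giving $N_\alpha\gtrsim(\theta/\varepsilon)^{\dim X-1}$, versus packing near an equator of $S^{\dim Y-1}$ giving $N_\alpha\lesssim\varepsilon^{-(\dim Y-2)}$) force $\dim Y\ge\dim X+1$. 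In the cone example this mechanism automatically pushes $\hat p_i$ to the vertex. Your preliminary observation about the limit space and the lower semicontinuity of $|\nabla\dist|$ are fine, but they only control the \emph{location} of critical points, not the dimension of the blow-up, and the deduction of the noncollapsed case from (2)(ii) inherits the gap.
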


\begin{rem}\label{rem:res}
As can be seen from the proof, the choice of $r$ depends only on the limit space $X$ (and the dimension $n$) and is independent of the sequence $M_i$.
Indeed, it suffices to choose $r$ small enough that the rescaled space $r^{-1}X$ is sufficiently close to the tangent cone $T_p$.
The value $\delta_i$ is the maximum distance between $\hat p_i$ and critical points of $\dist_{\hat p_i}$ in $\bar B(\hat p_i,r)\setminus\{\hat p_i\}$.
\end{rem}

\begin{ex}\label{ex:res}
Let $S^1_\varepsilon$ denote the circle of length $\varepsilon\ll1$.
\begin{enumerate}
\item Consider a collapsing sequence $(K(S^1_\varepsilon),o)\xrightarrow{\mathrm{GH}}(\mathbb R_+,0)$ as $\varepsilon\to0$, where $K(\cdot)$ denotes the Euclidean cone.
If $p=0\in\mathbb R_+$, then we can choose $\hat p_i=o\in K(S^1_\varepsilon)$ so that Theorem \ref{thm:res}(1) holds.
\item Consider a collapsing sequence $\mathbb R\times S^1_\varepsilon\xrightarrow{\mathrm{GH}}\mathbb R$ as $\varepsilon\to0$.
If $p=0\in\mathbb R$, then we can choose $\delta_\varepsilon=\varepsilon/2$ so that Theorem \ref{thm:res}(2) holds ($\hat p_i$ is an arbitrary sequence converging to $p$).
\end{enumerate}
\end{ex}

Here we omit the proof of Theorem \ref{thm:res} because later we will prove a stronger quantitative version, Theorem \ref{thm:res2} (or see the original proof in \cite[3.2]{Y}).

Now we give the definitions of essential covering and isotopy covering system.
Although our definitions are slightly stronger than the original ones in \cite{Y}, we use the same terminology as in \cite{Y}.
See Remark \ref{rem:ess} for more details.

Let $M$ be an Alexandrov space.
For an open metric ball $B\subset M$ centered at $p$, we call a concentric open metric ball $\hat B\subset B$ an \textit{isotopic subball} of $B$ if $\dist_p$ has no critical points on the annulus $\bar B\setminus\hat B$.
Consider a family of open metric balls $\mathcal B=\{B_{\alpha_1\cdots\alpha_k}\}$, where
\[1\le\alpha_1\le N_1,\quad1\le\alpha_2\le N_2(\alpha_1),\quad\dots,\quad1\le\alpha_k\le N_k(\alpha_1\cdots\alpha_{k-1})\]
and $1\le k\le l$ for some $l$ depending on $\alpha_1,\alpha_2,\dots$.
We call $N_1$ the \textit{first degree} of $\mathcal B$ and $N_k(\alpha_1\cdots\alpha_{k-1})$ the \textit{$k$-th degree} of $\mathcal B$ with respect to $\alpha_1\cdots\alpha_{k-1}$.
Let $A$ be the set of all multi-indices $\alpha_1\cdots\alpha_k$ such that $B_{\alpha_1\cdots\alpha_k}\in\mathcal B$, and $\hat A$ the set of all maximal multi-indices in $A$.
Here $\alpha_1\cdots\alpha_l$ is maximal if there are no $\alpha_{l+1}$ with $\alpha_1\cdots\alpha_l\alpha_{l+1}\in A$.
For each $\alpha=\alpha_1\cdots\alpha_k\in A$, we set $|\alpha|:=k$.

\begin{dfn}\label{dfn:ess}
Let $X$ be a subset of $M$.
We call $\mathcal B$ an \textit{isotopy covering system} of $X$ if it satisfies the following conditions:
\begin{enumerate}
\item $\{B_{\alpha_1}\}_{\alpha_1=1}^{N_1}$ covers $X$;
\item  $\{B_{\alpha_1\cdots\alpha_k}\}_{\alpha_k=1}^{N_k(\alpha_1\cdots\alpha_{k-1})}$ covers an isotopic subball $\hat B_{\alpha_1\cdots\alpha_{k-1}}$ of $B_{\alpha_1\cdots\alpha_{k-1}}$;
\item for each $\alpha\in\hat A$, $\dist_{p_\alpha}$ has no critical points on $\bar B_\alpha\setminus\{p_\alpha\}$, where $p_\alpha$ is the center of $B_\alpha$;
\item there is a uniform bound $d$ such that $|\alpha|\le d$ for all $\alpha\in A$.
\end{enumerate}
We call $\mathcal U=\{B_\alpha\}_{\alpha\in\hat A}$ an \textit{essential covering} of $X$.
In addition, we call $d_0=\max_{\alpha\in\hat A}|\alpha|$ the \textit{depth} of both $\mathcal B$ and $\mathcal U$.
\end{dfn}

\begin{rem}\label{rem:ess}
The above definition is stronger than Yamaguchi's original definition.
Roughly speaking, Yamaguchi's definition only requires that $B_\alpha$ is homeomorphic to $\hat B_\alpha$ for $\alpha\in A\setminus\hat A$ and is homeomorphic to some Euclidean cone for $\alpha\in\hat A$ (see \cite[\S4]{Y} for more details).
It follows from Perelman's stability theorem and fibration theorem that our definition above implies these properties (see Theorem \ref{thm:stab}).
In simple terms, Yamaguchi's essential covering is \textit{topological}, whereas ours is \textit{geometrical}.
For the rectangle example in \S\ref{sec:tool}, the topological essential covering can be chosen to be $M$ itself, but the geometrical essential covering must be the four balls centered at the vertices, as the vertices are extremal points.
\end{rem}

For a positive integer $d$, we denote by $\tau_d(X)$ the minimal number of metric balls forming an essential covering of $X$ with depth $\le d$.
For an open metric ball $B$ in $M$ having a proper isotopic subball, we set
\[\tau_d^*(B):=\min_{\hat B}\tau_d(\hat B),\]
where $\hat B$ runs over all isotopic subballs of $B$.
In addition, if $\dist_p$ has no critical points on $\bar B\setminus\{p\}$, where $p$ is the center of $B$, we set $\tau_0^*(B):=1$; otherwise $\tau_0^*(B):=\infty$.
Then the following holds: if $X$ is covered by open metric balls $\{B_{\alpha_1}\}_{\alpha_1=1}^{N_1}$ having proper isotopic subballs, we have
\[\tau_d(X)\le\sum_{\alpha_1=1}^{N_1}\tau_{d-1}^*(B_{\alpha_1})\]
for any $d\ge1$.

\begin{ex}\label{ex:ess}
For $0<\varepsilon\ll1$, consider a thin $n$-dimensional cuboid
\[I^n_\varepsilon=[0,1]\times[0,\varepsilon]\times[0,\varepsilon^2]\times\dots\times[0,\varepsilon^{n-1}].\]
Note that the faces of each dimension are extremal subsets.
As in \S\ref{sec:tool}, we see that metric balls of radii slightly less than $\varepsilon^{n-1}$ centered at the vertices form an essential covering of $I^n_\varepsilon$ with depth $n$.
Therefore $\tau_n(I^n_\varepsilon)\le2^n$ for any $\varepsilon$ (actually the equality holds since the vertices are extremal points).
On the other hand, $\lim_{\varepsilon\to0}\tau_{n-1}(I^n_\varepsilon)=\infty$.
\end{ex}

The following theorem is the main result of \cite{Y}.
Recall that $\mathcal A(n)$ denotes the set of (non-pointed) $n$-dimensional Alexandrov spaces with curvature $\ge-1$.

\begin{thm}[{\cite[4.4]{Y}}]\label{thm:ess}
For given $n$ and $D$, there exists a constant $C(n,D)$ satisfying the following:
for any $M\in\mathcal A(n)$ and $p\in M$, we have
\[\tau_n(B(p,D))\le C(n,D).\]
\end{thm}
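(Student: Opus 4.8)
The plan is to induct on the dimension $n$, using Theorem \ref{thm:res} as the engine that produces the successive steps of an isotopy covering system. The base case $n=1$ is easy: a one-dimensional Alexandrov space with curvature $\ge-1$ is either a circle, an interval, a ray, or a line, and in each case a bounded metric ball $B(p,D)$ admits an essential covering of depth $\le 1$ with a universally bounded number of balls (on such a ball every distance function from a suitably chosen center has no critical points away from the center, or at worst one intermediate step is needed at an endpoint). For the inductive step, I would argue by contradiction via the precompactness of $\mathcal A_p(n)$ (Theorem \ref{thm:precpt}): if the statement fails for some $n$ and $D$, there is a sequence $(M_i,p_i)\in\mathcal A_p(n)$ with $\tau_n(B(p_i,D))\to\infty$. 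Passing to a subsequence, $(M_i,p_i)$ converges to some pointed Alexandrov space $(X,p)$ with $\dim X\le n$; since $M_i$ is bounded-diameter and the limit is nontrivial, $\dim X\ge 1$.

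Now apply Theorem \ref{thm:res} at $p$: choose $r>0$ small (depending only on $X$ and $n$, by Remark \ref{rem:res}) and obtain $\hat p_i\to p$ so that either (1) or (2) holds. In case (1), after passing to a subsequence $\dist_{\hat p_j}$ has no critical points on $\bar B(\hat p_j,r)\setminus\{\hat p_j\}$, so $B(\hat p_j,r)$ is a depth-$0$ piece: $\tau_0^*(B(\hat p_j,r))=1$. In case (2), the annulus $A(\hat p_i;\lambda\delta_i,r)$ is critical-point-free for every $\lambda>1$ and large $i$, so $\hat B_i:=B(\hat p_i,\lambda\delta_i)$ is an isotopic subball of $B(\hat p_i,r)$; moreover any rescaled limit $(Y,y_0)$ of $(\tfrac1{\delta_i}M_i,\hat p_i)$ has $\dim Y\ge\dim X+1$. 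The point of the rescaling is that $\hat B_i$, viewed in the rescaled space $\tfrac1{\delta_i}M_i$, is a ball of fixed radius $\lambda$ in a space converging to $(Y,y_0)$ with strictly larger dimension; iterating this dimension-raising at most $n-\dim X\le n-1$ more times, each step contributes a bounded branching factor, and the process must terminate with depth $\le n$. This is exactly how one builds the isotopy covering system: each application of Theorem \ref{thm:res} either closes off a branch (case (1)) or refines it by one level while strictly increasing the dimension of the relevant tangent/blow-up limit (case (2)), and since dimension is bounded by $n$ the total depth is $\le n$.

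To make the bound uniform, I would combine a standard covering argument with the above. Cover $B(p_i,D)$ by a bounded number $N=N(n,D)$ of balls $B(q_i^{(1)},r^{(1)}),\dots,B(q_i^{(N)},r^{(N)})$, where the radii and centers are chosen (via a compactness/finiteness argument over the precompact family of limit spaces) so that each such ball, after the recentering provided by Theorem \ref{thm:res}, is either depth-$0$ or has an isotopic subball whose rescaled limit has larger dimension. Then use the subadditivity inequality $\tau_n(X)\le\sum_{\alpha_1}\tau_{n-1}^*(B_{\alpha_1})$ recorded before Example \ref{ex:ess}, together with the inductive hypothesis applied in the rescaled spaces (where the relevant dimension has dropped, or rather the ``remaining dimension to fill'' has dropped), to bound each $\tau_{n-1}^*(B_{\alpha_1})$ by a constant depending only on $n$ and $D$. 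Multiplying through gives $\tau_n(B(p_i,D))\le C(n,D)$, contradicting $\tau_n(B(p_i,D))\to\infty$.

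The main obstacle is the bookkeeping in the inductive step: one must ensure that the radii $r$ produced by Theorem \ref{thm:res} at each stage, and the branching numbers $N_k$, can all be controlled uniformly over the (precompact, hence ``essentially finite'') set of limit spaces that can arise, and that the rescalings $\tfrac1{\delta_i}M_i$ at each level again land in a precompact family to which the induction applies. The delicate point is that $\delta_i$ depends on the sequence, not just the limit, so one has to run the compactness argument carefully at each level — extracting further subsequences and re-invoking Remark \ref{rem:res} — while keeping the depth capped at $n$ by the dimension-monotonicity in Theorem \ref{thm:res}(2)(ii). Once that scaffolding is set up, the estimate itself is just the iterated subadditivity inequality.
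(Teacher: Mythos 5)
Your plan follows essentially the same route as the paper: a contradiction argument via precompactness, organized as a reverse induction on the dimension of the limit space (your ``remaining dimension to fill''), where Theorem \ref{thm:res} either closes off a branch or passes to a rescaled sequence whose limit has strictly larger dimension, and the global bound comes from covering $\bar B(p,D)$ by finitely many balls of the radii supplied by Theorem \ref{thm:res} together with the subadditivity of $\tau_d$. The only slips are cosmetic: the induction is not really on the ambient dimension $n$ (the rescaled spaces $\tfrac1{\delta_i}M_i$ are still $n$-dimensional, so the correct parameter is $n-\dim X$, exactly as in the paper's alternating statements ($P_k$), ($Q_k$)), and the limit $X$ can be a single point when $\diam M_i\to0$, a case the paper disposes of by first rescaling $M_i$ by the reciprocal of its diameter.
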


\begin{rem}\label{rem:ess2}
More precisely, \cite[4.4]{Y} states that there exists an isotopy covering system of $B(p,D)$ whose first degree is bounded above by $C(n,D)$ and whose other higher degrees are bounded above by some constant $C(n)$ independent of $D$.
However, the above simple version is enough for our applications.
\end{rem}

Let us review the proof, assuming Theorem \ref{thm:res}.

\begin{proof}
For $1\le k\le n$, we prove the following two statements by reverse induction on $k$.

\begin{itemize}
\item[($P_k$)]\textit{Suppose $(M_i,p_i)\in\mathcal A_p(n)$ converges to a $k$-dimensional Alexandrov space $(X,p)$.
Then we have
\[\liminf_{i\to\infty}\tau_{n-k+1}(B(p_i,D))<\infty.\]}
\item[($Q_k$)]\textit{Suppose $(M_i,p_i)\in\mathcal A_p(n)$ converges to a $k$-dimensional Alexandrov space $(X,p)$.
Then for sufficiently small $r>0$, there exists a sequence $\hat p_i\in M_i$ converging to $p$ such that
\[\liminf_{i\to\infty}\tau_{n-k}^*(B(\hat p_i,r))<\infty.\]}
\end{itemize}

Here $r$ and $\hat p_i$ in ($Q_k$) are the ones of Theorem \ref{thm:res}.
In particular, $r$ depends only on the limit space $X$ (see Remark \ref{rem:res}).
Note that ($P_k$) is a global claim while ($Q_k$) is a local claim.
The proof is carried out in the following alternating order: $(Q_n)\Rightarrow(P_n)\Rightarrow\dots\Rightarrow(Q_1)\Rightarrow(P_1)$.

($Q_n$) clearly follows from Theorem $\ref{thm:res}(1)$.
Let us prove $(Q_k)\Rightarrow(P_k)$.
Suppose that ($P_k$) does not hold.
Then there exists $(M_i,p_i)\in\mathcal A_p(n)$ converging to $(X,p)$ with $\dim X=k$ such that
\[\lim_{i\to\infty}\tau_{n-k+1}(B(p_i,D))=\infty.\]
By compactness, we can cover $\bar B(p,D)$ by finitely many balls $\{B(x_\alpha,r_\alpha/2)\}_{\alpha=1}^N$, where $r_\alpha$ is the one of ($Q_k$).
Then there exist a subsequence $\{j\}$ and a constant $C$ such that $\tau_{n-k}^*(B(\hat x_\alpha^j,r_\alpha))\le C$ for every $\alpha$ and some $\hat x_\alpha^j\to x_\alpha$.
Since $\{B(\hat x_\alpha^j,r_\alpha)\}_{\alpha=1}^N$ is a covering of $B(p_j,D)$ for sufficiently large $j$, we have
\[\tau_{n-k+1}(B(p_j,D))\le NC.\]
This contradicts the assumption.

Next we prove $(P_n),\dots,(P_{k+1})\Rightarrow(Q_{k})$.
Suppose $(M_i,p_i)\in\mathcal A_p(n)$ converges to $(X,p)$ with $\dim X=k$.
By Theorem \ref{thm:res}, for sufficiently small $r>0$, there exists $\hat p_i\to p$ such that either (1) or (2) holds.
When (1) holds, the claim is trivial.
When (2) holds, there exists $\delta_i\to0$ satisfying both (i) and (ii).
Passing to a subsequence $\{j\}$, we may assume that $(\frac1{\delta_j}M_j,\hat p_j)$ converges to $(Y,y_0)$.
Then we have $l:=\dim Y\ge\dim X+1=k+1$.
Applying ($P_l$) to $\frac1{\delta_j}B(\hat p_j,2\delta_j)$ and passing to a subsequence again, we have
\[\tau_{n-l+1}\left(\frac1{\delta_j}B(\hat p_j,2\delta_j)\right)\le C\]
for some constant $C$.
Since $B(\hat p_j,2\delta_j)$ is an isotopic subball of $B(\hat p_j,r)$ for sufficiently large $j$, we obtain
\[\tau_{n-k}^*(B(\hat p_j,r))\le\tau_{n-k}(B(\hat p_j,2\delta_j))\le C.\]
This completes the inductive proof of ($P_k$) and ($Q_k$).

Now Theorem \ref{thm:ess} follows from ($P_1$), \dots, ($P_n$) by contradiction.
Note that the case $\dim X=0$ follows from the case $\dim X\ge1$ by rescaling $M_i$ so that the new diameter is $1$.
\end{proof}

\section{Numbers of extremal subsets in Alexandrov spaces}\label{sec:num}

In this section, we prove Theorem \ref{thm:main}(1) and Corollary \ref{cor:nonneg}(1).

For a subset $X$ of an Alexandrov space $M$, we define $\nu(X)$ as follows:
\begin{gather*}
\nu(X):=\#\Bigl(\left\{\text{$E$ : an extremal subset of $M$}\right\}\bigm/\sim\Bigr),\\
\text{where}\quad E\sim E'\iff X\cap E=X\cap E',
\end{gather*}
i.e., the number of extremal subsets in $M$ counted by ignoring the differences outside $X$.
If $X$ is covered by $\{X_\alpha\}_{\alpha=1}^N$, we have
\[\nu(X)\le\prod_{\alpha=1}^N\nu(X_\alpha).\]

The following lemma was essentially used in \cite{PP1} to show the finiteness of the number of extremal subsets in a compact Alexandrov space.
This controls the behavior of $\nu$ on the balls of isotopy covering systems.

\begin{lem}[cf.\ {\cite[3.6]{PP1}}]\label{lem:num}
Let $M$ be an Alexandrov space and $p\in M$.
\begin{enumerate}
\item If $\dist_p$ has no critical points on $\bar B(p,r)\setminus\{p\}$, then
\[\nu(B(p,r))\le\nu(\Sigma_p)+1.\]
Here $\nu(\Sigma_p)$ denotes the number of extremal subsets in $\Sigma_p$ regarded as a space of curvature $\ge1$ (see Definition \ref{dfn:ex}).
\item If $\dist_p$ has no critical points on $A(p;r_1,r_2)$, then
\[\nu(B(p,r_1))=\nu(B(p,r_2)).\]
\end{enumerate}
\end{lem}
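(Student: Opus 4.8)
The plan is to exploit the structure of the gradient flow of $\dist_p$ to relate extremal subsets near $p$ to extremal subsets of the space of directions $\Sigma_p$. The key observation is that if $\dist_p$ has no critical points on a punctured ball or annulus, then the gradient flow $\Phi_{\dist_p}^t$ (suitably reparametrized) gives deformation retractions that preserve extremal subsets, since $\Phi_f^t(E)\subset E$ for every extremal subset $E$ and every semiconcave $f$.

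For part (2), I would argue as follows. Suppose $\dist_p$ has no critical points on the annulus $A(p;r_1,r_2)$. Restriction gives a map from extremal subsets of $M$ to their intersections with $B(p,r_2)$, and further to their intersections with $B(p,r_1)$; the latter is obviously surjective onto the equivalence classes counted by $\nu(B(p,r_1))$, so $\nu(B(p,r_1))\le\nu(B(p,r_2))$ is not the issue — rather I must show the reverse, that an extremal subset is determined on $B(p,r_2)$ by its trace on $B(p,r_1)$ (modulo what happens outside $B(p,r_2)$, which is irrelevant to both counts). For this, observe that the gradient flow of $-\dist_p$, or more precisely of the semiconcave function used in \cite{PP1} whose gradient curves flow outward, carries $E\cap B(p,r_1)$ onto $E\cap B(p,r_2)$ because no critical points obstruct the flow across the annulus; conversely flowing inward recovers $E\cap B(p,r_1)$. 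Since extremal subsets are exactly the flow-invariant closed sets, two extremal subsets agreeing on $B(p,r_1)$ must agree on all of $A(p;r_1,r_2)$, hence on $B(p,r_2)$. This gives equality.

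For part (1), with $\dist_p$ having no critical points on $\bar B(p,r)\setminus\{p\}$, I would combine the annulus argument above with a rescaling/blow-up at $p$. Letting $r_1\to0$, the space $(\frac1{r_1}B(p,r_1),p)$ converges to the tangent cone $(T_p,o)=K(\Sigma_p)$, and extremal subsets of $M$ meeting $B(p,r)$ correspond in the limit to extremal subsets of $T_p$ through the point $o$ (using that limits of extremal subsets are extremal and that $T_pE=\overline{\lim \frac1\lambda E}$). An extremal subset $F$ of the cone $T_p$ containing $o$ is precisely the cone over an extremal subset of $\Sigma_p$, and the one extremal subset of $T_p$ \emph{not} containing $o$ that can arise is the empty set — this is the source of the ``$+1$''. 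So the number of possible traces $E\cap B(p,r)$ is at most the number of extremal subsets of $\Sigma_p$ (counted as a space of curvature $\ge1$) plus one for $E\cap B(p,r)=\emptyset$. One must be a little careful that distinct extremal subsets of $M$ with the same tangent cone at $p$ but different traces on $B(p,r)$ do not arise: this is where the no-critical-points hypothesis on the full punctured ball is used, via the conical structure (Perelman's fibration/stability theorem, or directly the radial flow) showing $B(p,r)$ is ``straightened'' to a neighborhood of $o$ in $T_p$ in a way compatible with all extremal subsets simultaneously.

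The main obstacle I anticipate is making the correspondence between extremal subsets near $p$ and extremal subsets of $\Sigma_p$ (resp.\ across the annulus) genuinely \emph{bijective} on equivalence classes rather than merely surjective in one direction. Surjectivity of ``take space of directions'' onto extremal subsets of $\Sigma_p$ is standard, but injectivity — that $\Sigma_pE$ (or the trace on a smaller ball) determines $E\cap B(p,r)$ — requires the absence of critical points to run a flow argument cleanly, including checking that the relevant gradient/radial flow is defined on the whole region in question and depends continuously enough to transport closed sets both ways. I expect this to be exactly the content extracted from \cite[3.6]{PP1}, so citing and adapting that argument is the efficient route; the new input needed here is only that all the flows and retractions involved respect \emph{every} extremal subset at once, which follows formally from $\Phi_f^t(E)\subset E$.
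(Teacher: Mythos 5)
Your overall strategy is recognizable but it is not the paper's, and as written it has a genuine gap. The paper's proof of (2) uses none of the gradient-flow machinery: it takes two extremal subsets $E,F$ agreeing on $B(p,r_1)$ but not on $B(p,r_2)$, forms $G=\overline{E\setminus F}$ (which is again extremal by \cite[3.4--5]{PP1}), observes that $G$ misses $B(p,r_1)$ but meets $B(p,r_2)$, and notes that a closest point of $G$ to $p$ lies in the annulus and — directly by the defining property $(\star)$ of extremality, since it is a minimum of $\dist_p|_G$ with $p\notin G$ — is a critical point of $\dist_p$, contradicting the hypothesis. This is the missing idea in your write-up: the closure of the difference of two extremal subsets is extremal, and extremality converts ``agreement fails first at some point of the annulus'' into ``there is a critical point in the annulus.'' Your substitute for this step does not go through as stated: gradient flows in Alexandrov spaces are only forward semiflows, so ``flowing inward recovers $E\cap B(p,r_1)$'' is not available, and the claim that the outward flow carries $E\cap B(p,r_1)$ \emph{onto} $E\cap A(p;r_1,r_2)$ is not a formal consequence of $\Phi_f^t(E)\subset E$ plus absence of critical points — it is a nontrivial surjectivity statement (Lemma \ref{lem:flow} and Proposition \ref{prop:gexp}(2) in the paper), proved there by induction over the stratification of $E$ using local homology. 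If you import that result your argument for (2) closes, but at the cost of much heavier tools than the two-line closest-point argument.

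For (1) the same gap recurs in sharper form. The paper does not straighten $B(p,r)$ to the tangent cone; it argues that if $\Sigma_pE=\Sigma_pF$ then $\Sigma_p(\overline{E\setminus F})=\overline{\Sigma_pE\setminus\Sigma_pF}=\emptyset$ and likewise for $\overline{F\setminus E}$, so $E$ and $F$ coincide on a small neighborhood of $p$, and then part (2) propagates this agreement out to $B(p,r)$; the ``$+1$'' counts the class of extremal subsets missing $B(p,r)$. Your proposed route — a single conical homeomorphism of $B(p,r)$ onto a neighborhood of $o$ in $T_p$ ``compatible with all extremal subsets simultaneously'' — asks for strictly more than the stability/fibration theorems give (they provide a homeomorphism respecting one chosen extremal subset), and you correctly flag but do not resolve this injectivity issue. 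The identity $\Sigma_p(\overline{E\setminus F})=\overline{\Sigma_pE\setminus\Sigma_pF}$ is exactly the tool that makes it unnecessary.
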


\begin{proof}
First we show (2).
Suppose $\nu(B(p,r_1))<\nu(B(p,r_2))$ and choose extremal subsets $E,F\subset M$ such that
\[B(p,r_1)\cap E=B(p,r_1)\cap F\quad\text{and}\quad B(p,r_2)\cap E\neq B(p,r_2)\cap F.\]
We may assume that $B(p,r_2)\cap(E\setminus F)\neq\emptyset$.
Then $G=\overline{E\setminus F}$ is an extremal subset (see \S\ref{sec:ex}), which satisfies $B(p,r_1)\cap G=\emptyset$ and $B(p,r_2)\cap G\neq\emptyset$.
In particular, a point $q\in G$ closest to $p$ lies in $A(p;r_1,r_2)$.
Since $G$ is extremal, $q$ must be a critical point of $\dist_p$.
This contradicts the assumption of (2).

Next we show (1).
It follows from (2) and the assumption of (1) that every extremal subset intersecting $B(p,r)$ must contain $p$.
Let $E,F\subset M$ be extremal subsets intersecting $B(p,r)$ such that $\Sigma_pE=\Sigma_pF$.
Then
\[\Sigma_p(\overline{E\setminus F})=\overline{\Sigma_pE\setminus\Sigma_pF}=\emptyset\quad\text{and}\quad\Sigma_p(\overline{F\setminus E})=\emptyset\]
(see \S\ref{sec:ex}).
Therefore, $E$ and $F$ coincide in a sufficiently small neighborhood of $p$.
Again by (2), we see that $B(p,r)\cap E=B(p,r)\cap F$.
Thus we conclude that $\nu(B(p,r))\le\nu(\Sigma_p)+1$.
Note that the $+1$ on the right-hand side comes from the empty set of $M$ (or equivalently, extremal subsets not intersecting $B(p,r)$).
\end{proof}

\begin{rem}\label{rem:eq}
The equality in Lemma \ref{lem:num}(1) does not hold generally.
For example, consider a solid square and round the corners except for one vertex $p$, as shown in the following figure.
The resulting space $M$ is an Alexandrov space and its proper extremal subsets are the boundary and $p$.
However, $\Sigma_p$, which is isometric to $[0,\pi/2]$, contains three proper extremal subsets: the two boundary directions and their union.
Therefore, $\nu(M)=4$ and $\nu(\Sigma_p)+1=6$ (note that we are counting the empty set and the whole space as extremal subsets; see also the additional condition in Definition \ref{dfn:ex}).
\begin{figure}[h]
\centering
\begin{tikzpicture}
\draw[rounded corners=10pt](0,0)--(1,0)--(1,1)--(0,1)--(0,0);
\coordinate[label=left:$p$](p)at(0,0);
\fill(p)circle(1.5pt);
\draw[->](0,0)--(0.5,0);
\draw[->](0,0)--(0,0.5);
\end{tikzpicture}
\end{figure}
\end{rem}

Theorem \ref{thm:ess} and Lemma \ref{lem:num} imply the uniform boundedness of the numbers of extremal subsets.

\begin{thm}\label{thm:num}
For given $n$ and $D$, there exists a constant $C(n,D)$ satisfying the following:
for any $M\in\mathcal A(n)$ and $p\in M$, we have
\[\nu(B(p,D))\le C(n,D).\]
\end{thm}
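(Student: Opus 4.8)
The plan is to induct on the dimension $n$, feeding Yamaguchi's covering estimate (Theorem \ref{thm:ess}) into Lemma \ref{lem:num}. The cases $n\le1$ are elementary: an at most one-dimensional Alexandrov space is an interval or a circle, whose only extremal subsets are the empty set, the whole space, and subsets of the (finite) boundary, so $\nu$ is bounded there by an absolute constant. For $n\ge2$, assume the theorem in dimension $n-1$ for every diameter bound. Since a space of directions $\Sigma_q$ of an $n$-dimensional Alexandrov space is a compact $(n-1)$-dimensional space of curvature $\ge1$, it has curvature $\ge-1$ and diameter $\le\pi$, so the induction hypothesis (with diameter bound $2\pi$) gives $\nu(\Sigma_q)\le C(n-1,2\pi)$, a quantity depending only on $n$. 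Here one should note that an extremal subset of $\Sigma_q$ in the curvature-$\ge1$ sense still satisfies ($\star$), hence is counted by $\nu$ in the basic sense controlled by the induction hypothesis. Set $c_0=c_0(n):=C(n-1,2\pi)+1$.

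Next I would fix $M\in\mathcal A(n)$ and $p\in M$ and invoke Theorem \ref{thm:ess} to obtain an isotopy covering system $\mathcal B=\{B_{\alpha_1\cdots\alpha_k}\}$ of $B(p,D)$ of depth $\le n$ whose essential covering $\{B_\alpha\}_{\alpha\in\hat A}$ has $\#\hat A$ bounded by a constant $C_1=C_1(n,D)$. The two parts of Lemma \ref{lem:num} are exactly suited to propagating control of $\nu$ through the index tree $A$. For a maximal $\alpha\in\hat A$, condition (3) of Definition \ref{dfn:ess} says $\dist_{p_\alpha}$ has no critical points on $\bar B_\alpha\setminus\{p_\alpha\}$, so Lemma \ref{lem:num}(1) and the previous paragraph give $\nu(B_\alpha)\le\nu(\Sigma_{p_\alpha})+1\le c_0$. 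For a non-maximal $\alpha$, condition (2) of Definition \ref{dfn:ess} says the isotopic subball $\hat B_\alpha$ is covered by the children $\{B_{\alpha\beta}\}_\beta$, so $\nu(\hat B_\alpha)\le\prod_\beta\nu(B_{\alpha\beta})$ by multiplicativity of $\nu$, while Lemma \ref{lem:num}(2) gives $\nu(B_\alpha)=\nu(\hat B_\alpha)$ because $\dist_{p_\alpha}$ has no critical points on the annulus $\bar B_\alpha\setminus\hat B_\alpha$. A reverse induction on $|\alpha|$ then yields $\nu(B_\alpha)\le c_0^{N(\alpha)}$, where $N(\alpha)$ is the number of maximal indices of $A$ extending $\alpha$; the combinatorial input is that each such index extends a unique child $\alpha\beta$, so $N(\alpha)=\sum_\beta N(\alpha\beta)$, with $N(\alpha)=1$ when $\alpha$ is maximal.

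Finally, since $\{B_{\alpha_1}\}_{\alpha_1}$ covers $B(p,D)$ by condition (1) of Definition \ref{dfn:ess}, multiplicativity of $\nu$ and the previous step give
\[
\nu(B(p,D))\le\prod_{\alpha_1}\nu(B_{\alpha_1})\le c_0^{\sum_{\alpha_1}N(\alpha_1)}=c_0^{\#\hat A}\le c_0(n)^{C_1(n,D)},
\]
using that each maximal index of $A$ lies below exactly one $B_{\alpha_1}$. This closes the induction.

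The argument is short because the substantive work is already packaged into Theorem \ref{thm:ess} and Lemma \ref{lem:num}. The point that requires care is that Theorem \ref{thm:ess} controls only the number of \emph{leaves} $\#\hat A$ of the isotopy covering system, not the total number of balls, so the exponent in the final estimate must be $\#\hat A$; the reverse induction is precisely what arranges this, since an internal ball enters the bound only through the leaves beneath it (and a ball with a single child contributes no new factor). A minor additional point, flagged above, is matching the curvature-$\ge1$ notion of extremality on spaces of directions with the basic notion used in the induction hypothesis.
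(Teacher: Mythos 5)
Your proposal is correct and follows essentially the same route as the paper: induction on $n$ via Lemma \ref{lem:num}(1) applied to spaces of directions, combined with reverse induction along the tree of an isotopy covering system from Theorem \ref{thm:ess}, using Lemma \ref{lem:num}(2) and multiplicativity of $\nu$ at the internal nodes. Your bookkeeping via the leaf count $\#\hat A$ (rather than bounding each degree, as the paper does) and your explicit treatment of the base case and of the curvature-$\ge1$ notion of extremality are minor refinements, not a different argument.
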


\begin{proof}
We use induction on $n$.
By Theorem $\ref{thm:ess}$, there exists an isotopy covering system $\mathcal B=\{B_{\alpha_1\cdots\alpha_k}\}$ of $B(p,D)$ with depth $\le n$ whose degrees $N_k$ are bounded above by $C(n,D)$.
Let $\mathcal U=\{B_\alpha\}_{\alpha\in\hat A}$ be the essential covering associated with $\mathcal B$.

For $\alpha=\alpha_1\cdots\alpha_l\in\hat A$ and $1\le k\le l$, we prove by reverse induction on $k$ that
\[\nu(B_{\alpha_1\cdots\alpha_k})\le C(n,D).\]
In the case $k=l$, this follows from Lemma \ref{lem:num}(1) and the hypothesis of the induction on $n$.
Consider the case $k\le l-1$.
Recall that $\{B_{\alpha_1\cdots\alpha_{k+1}}\}_{\alpha_{k+1}=1}^{N_{k+1}(\alpha_1\cdots\alpha_k)}$ is a covering of an isotopic subball $\hat B_{\alpha_1\cdots\alpha_k}$ of $B_{\alpha_1\cdots\alpha_k}$.
The hypothesis of the reverse induction gives $\nu(B_{\alpha_1\cdots\alpha_{k+1}})\le C(n,D)$ for every $1\le \alpha_{k+1}\le N_{k+1}(\alpha_1\cdots\alpha_k)$.
Therefore, by Lemma \ref{lem:num}(2), we have
\[\nu(B_{\alpha_1\cdots\alpha_k})=\nu(\hat B_{\alpha_1\cdots\alpha_k})\le\prod_{\alpha_{k+1}=1}^{N_{k+1}(\alpha_1\cdots\alpha_k)}\nu(B_{\alpha_1\cdots\alpha_{k+1}})\le C(n,D)^{C(n,D)}.\]
This completes the induction step of the reverse induction.

Finally, since $\{B_{\alpha_1}\}_{\alpha_1=1}^{N_1}$ is a covering of $B(p,D)$, we conclude $\nu(B(p,D))\le C(n,D)$.
\end{proof}

Corollary \ref{cor:nonneg}(1) immediately follows from the following lemma.

\begin{lem}\label{lem:nonneg}
Let $M$ be a noncompact Alexandrov space of nonnegative curvature and $p\in M$.
Then for sufficiently large $R>0$, $\dist_p$ has no critical points on $M\setminus B(p,R)$.
\end{lem}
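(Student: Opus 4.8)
The plan is to argue by contradiction using a scaling (blow-down) argument combined with the lower semicontinuity of $|\nabla\dist_p|$. Suppose there is a sequence of points $q_i\in M$ with $|pq_i|=R_i\to\infty$ such that each $q_i$ is a critical point of $\dist_p$. Rescale the space by $\lambda_i=1/R_i$, so that in $(\lambda_i M,p)$ the point $q_i$ sits at distance exactly $1$ from $p$. Since $\lambda_i M$ is a nonnegatively curved Alexandrov space pointed at $p$, by the precompactness theorem (Theorem \ref{thm:precpt}, in its pointed form) a subsequence of $(\lambda_i M,p)$ converges in the pointed Gromov--Hausdorff topology to the tangent cone at infinity $(T_\infty M, o)$ — equivalently, to some pointed nonnegatively curved Alexandrov space $(Y,y_0)$ of dimension $\le n$. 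Along this subsequence $q_i$ converges to a point $q_\infty\in Y$ with $|y_0 q_\infty|=1$; in particular $q_\infty\ne y_0$, so $\dist_{y_0}$ is a genuine semiconcave function near $q_\infty$.

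The first key step is to see that $q_\infty$ is a critical point of $\dist_{y_0}$ on $Y$. This follows from the stability (lower semicontinuity) of the gradient under Gromov--Hausdorff convergence: criticality of $\dist_p$ at $q_i$ in $\lambda_i M$ passes to the limit, so $\nabla_{q_\infty}\dist_{y_0}=o$. (Concretely, $\tilde\angle\, q_i\, q_i\, x\le\pi/2$ for all $x\in\lambda_iM$ — which here, with $\kappa\to0$ after rescaling, is just the statement that all comparison angles at $q_i$ are $\le\pi/2$ — and comparison angles are continuous under convergence of triangles.) So $\dist_{y_0}$ has a critical point at distance $1$ from $y_0$ in $Y$.

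The second key step is to derive a contradiction from the existence of a critical point of $\dist_{y_0}$ at a \emph{positive} distance in the cone-like space $Y$. Here I would use the fact that $Y$, being a pointed limit of rescalings $\lambda_iM$ with $\lambda_i\to\infty\cdot$(scale) — actually the blow-\emph{down} — carries extra homogeneity: by Yamaguchi-type or standard asymptotic-cone arguments $Y$ is in fact a metric cone with vertex $o=y_0$ (the asymptotic cone of a nonnegatively curved space is a cone; alternatively one invokes the splitting/soul structure). On a Euclidean cone $Y=K(\Sigma)$, the distance function $\dist_o$ has \emph{no} critical points away from the vertex: for $v\ne o$ the direction pointing radially away from $o$ realizes $d_v\dist_o=1>0$, so $\nabla_v\dist_o\ne o$. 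This contradicts the previous step, completing the proof. If one prefers to avoid asserting that the blow-down is exactly a cone, an alternative is: apply the first two steps not to a single rescaling but observe that criticality at ever-larger radii, together with the gradient flow of $\dist_p$ pushing critical points outward, would force $M$ to contain an unbounded family of critical points whose blow-down still has a critical point at distance $1$; then the nonnegative curvature and the generalized Liberman/monotonicity estimate rule this out.

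The main obstacle I anticipate is making the passage to the limit of the criticality condition fully rigorous when the rescalings may collapse (i.e.\ $\dim Y<\dim M$): one must ensure the limiting point $q_\infty$ is not the basepoint and that the lower semicontinuity argument for $|\nabla\dist|$ applies across the collapse, for which the cited properties of gradients and the structure theory of limits of Alexandrov spaces are exactly what is needed. The cone structure of the asymptotic space (or a suitable substitute) is the second delicate point, but for the nonnegatively curved case this is classical.
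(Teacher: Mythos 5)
Your argument is correct and is essentially the paper's own proof: the paper likewise takes the blow-down limit $\lim_{\lambda\to0}(\lambda M,p)=(K(M(\infty)),o)$ (citing Shioya for the fact that this is the cone over the ideal boundary), observes that $\dist_o$ has no critical points on $K(M(\infty))\setminus\{o\}$, and transfers this back via lower semicontinuity of $|\nabla\dist|$ under convergence. The cone structure you hedge about is exactly the cited classical fact, so no alternative route is needed.
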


\begin{proof}
For a nonnegatively curved space, we have $\lim_{\lambda\to0}(\lambda M,p)=(K(M(\infty)),o)$, where the right-hand side denotes the Euclidean cone over the ideal boundary of $M$ (see \cite[1.1]{S}).
Since $\dist_o$ has no critical points on $K(M(\infty))\setminus\{o\}$, so does $\dist_p$ on $M\setminus B(p,\lambda^{-1})$ for sufficiently small $\lambda$.
\end{proof}

\begin{proof}[Proof of Corollary \ref{cor:nonneg}(1)]
Let $M$ be an $n$-dimensional (noncompact) Alexandrov space of nonnegative curvature and $p\in M$.
By rescaling, we may assume that the constant of Theorem \ref{thm:num} is independent of $D$.
Namely, there exists $C(n)$ such that $\nu(B(p,D))\le C(n)$ for any $D>0$.
Furthermore, Lemma \ref{lem:nonneg} and Lemma \ref{lem:num}(2) imply that the number of extremal subsets does not increase outside a sufficiently large ball $B(p,R)$.
Thus we have $\nu(M)\le C(n)$.
\end{proof}

\begin{rem}\label{rem:num}
As mentioned in \S\ref{sec:rel}, Perelman \cite[4.3]{Per3} showed that the number of extremal points in a compact $n$-dimensional Alexandrov space of nonnegative curvature is at most $2^n$.
On the other hand, Yamaguchi \cite[4.8]{Y} conjectured that $\tau_n\le2^n$ for such spaces (for his topological essential covering; see Remark \ref{rem:ess}).
Note that if an extremal point exists, then it must be the center of a metric ball of our geometrical essential covering.
Therefore, if Yamaguchi's conjecture is true for our geometrical essential covering, then this implies Perelman's result.
\end{rem}

\section{Betti numbers of extremal subsets}\label{sec:betti}

In this section, we prove Theorem \ref{thm:main}(2) and Corollary \ref{cor:nonneg}(2).

We need the fibration theorem and the stability theorem of Perelman \cite{Per1}, \cite{Per2}, especially their generalizations to extremal subsets by Perelman--Petrunin \cite{PP1} and Kapovitch \cite{K}.
The following is a special case of these two theorems (see the above references for the general statements).

\begin{thm}[{\cite[\S2]{PP1}}, {\cite[\S9]{K}}]\label{thm:stab}
Let $M$ be an Alexandrov space, $E\subset M$ an extremal subset, and $p\in M$.
\begin{enumerate}
\item If $\dist_p$ has no critical points on $\bar B(p,r)\setminus\{p\}$, then $B(p,r)\cap E$ is homeomorphic to $T_pE$.
Note that if $B(p,r)\cap E\neq\emptyset$, then $p\in E$ (see Lemma \ref{lem:num}).
\item If $\dist_p$ has no critical points on $A(p;r_1,r_2)$, then $A(p;r_1,r_2)\cap E$ is homeomorphic to $\partial B(p,r_1)\cap E\times[0,1]$.
\end{enumerate}
\end{thm}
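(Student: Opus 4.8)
The plan is to deduce both statements directly from the fibration theorem and the stability theorem for extremal subsets recalled in \S\ref{sec:ex}, in the same way one handles Alexandrov spaces themselves. The one preliminary point I would nail down first is that $\dist_p$ restricts to a proper, regular, admissible function on the relevant domains of $E$: the closed ball $\bar B(p,r)$ is compact, so $\dist_p$ is proper on $\bar B(p,r)\cap E$ and on annuli; and at any $q\in E\cap(\bar B(p,r)\setminus\{p\})$ the hypothesis says $\nabla_q\dist_p\neq o$, while $q\in E$ forces $\nabla_q\dist_p\in T_qE$ (see \S\ref{sec:semi}), which is exactly the regularity needed for the fibration theorem on extremal subsets.

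For (2) I would simply apply the fibration theorem to the proper regular admissible map $\dist_p\colon A(p;r_1,r_2)\cap E\to[r_1,r_2]$. It is then a locally trivial fiber bundle over the interval $[r_1,r_2]$, hence trivial, so $A(p;r_1,r_2)\cap E$ is homeomorphic to $(\dist_p^{-1}(r_1)\cap E)\times[r_1,r_2]=(\partial B(p,r_1)\cap E)\times[0,1]$.

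For (1) I would proceed in two steps. First, for any $0<\varepsilon<r$ the fibration theorem makes $\dist_p\colon(\bar B(p,r)\setminus B(p,\varepsilon))\cap E\to[\varepsilon,r]$ a trivial bundle, i.e.\ a collar over $\partial B(p,\varepsilon)\cap E$; absorbing this (half-open) collar shows that $B(p,r)\cap E$ is homeomorphic to $\bar B(p,\varepsilon)\cap E$ for every small $\varepsilon$. Second, I would identify $\bar B(p,\varepsilon)\cap E$ with $T_pE$ by blowing up: under $(\lambda_i M,p)\to(T_p,o)$ with $\lambda_i\to\infty$ one has $\lambda_i E\to T_pE$, and since $\dim T_p=n$ this sequence is noncollapsing, so the stability theorem for extremal subsets (in its local form, for balls around the converging basepoints with noncritical boundary) gives that $\bar B(p,1/\lambda_i)\cap E$ is homeomorphic to $\bar B(o,1)\cap T_pE$ for large $i$; finally $\bar B(o,1)\cap T_pE=\bar B(o,1)\cap K(\Sigma_pE)$ is homeomorphic to $K(\Sigma_pE)=T_pE$ by radial rescaling. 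Chaining these homeomorphisms yields $B(p,r)\cap E\cong T_pE$.

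The main obstacle is the bookkeeping behind the preliminary observation and behind the local form of stability: one must be precise that "$\dist_p$ has no critical point in $M$ along $E$" is exactly the hypothesis under which the fibration theorem for extremal subsets applies (through $\nabla\dist_p\in TE$), and that the noncollapsing stability theorem for extremal subsets is available for balls with noncritical boundary, not merely for whole spaces. Both are standard, but this is where the content of \cite{PP1} and \cite{K} is genuinely used. An alternative for (1) would be to build the homeomorphism $B(p,r)\cap E\cong T_pE$ directly from the gradient flow of $\dist_p$ restricted to $E$ (which preserves $E$ since $\nabla\dist_p\in TE$), following Perelman's original argument; I would not pursue this, since it essentially reproves the stability theorem.
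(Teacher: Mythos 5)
The paper offers no proof of this statement: it is quoted verbatim from the literature as a special case of the fibration theorem and the stability theorem for extremal subsets (\cite[\S2]{PP1}, \cite[\S9]{K}), which is exactly what your proposal does. Your deduction is correct and matches the intended derivation---regularity of $\dist_p$ along $E$ via $\nabla\dist_p\in T_qE$ feeding the fibration theorem for (2), and collar absorption plus the noncollapsing blow-up $(\lambda_i M,p)\to(T_p,o)$, $\lambda_iE\to T_pE$ combined with local stability for (1)---so there is nothing to compare beyond noting that you have supplied details the paper deliberately omits.
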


The following lemma was used in the original work of Gromov \cite{G1} on the Betti numbers of Riemannian manifolds.

\begin{lem}[{\cite[Appendix]{G1}}, {\cite[5.4]{C}}]\label{lem:rank}
Let $B_{\alpha}^i$, $1\le\alpha\le N$, $0\le i\le n+1$, be open subsets of a topological space $X$ such that $\bar B_{\alpha}^i\subset B_{\alpha}^{i+1}$.
Set $A^i=\bigcup_{\alpha=1}^N B_{\alpha}^i$.
In what follows we only consider homology groups of dimension $\le n$.
For each $\mu=(\alpha_1,\dots,\alpha_m)$, let $f_\mu^i:H_*(B_{\alpha_1}^i\cap\dots\cap B_{\alpha_m}^i)\to H_*(B_{\alpha_1}^{i+1}\cap\dots\cap B_{\alpha_m}^{i+1})$ be the inclusion homomorphism.
Then the rank of the inclusion homomorphism $H_*(A^0)\to H_*(A^{n+1})$ is bounded above by the sum
\[\sum_{0\le i\le n,\mu}\rank f_\mu^i.\]
Note that if $B_{\alpha_1}^i\cap\dots\cap B_{\alpha_m}^i=\emptyset$, then we define $\rank f_\mu^i:=0$.
\end{lem}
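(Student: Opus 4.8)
The plan is to prove this classical lemma of Gromov and Cheeger by induction on the number $N$ of sets, peeling off one set at each step via the Mayer--Vietoris sequence. The whole force of the hypothesis is that the $n+2$ levels are to be played off against the homology cutoff ``$\le n$'', so the only real work is bookkeeping: keeping track of which level indices get assigned to which sub-cover.

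For $N=1$ the inclusion $H_*(B^0_1)\to H_*(B^{n+1}_1)$ factors through $H_*(B^1_1)$, so its rank is at most $\rank f^0_{(1)}$, one of the summands. For the inductive step I would set $U^i:=B^i_1\cup\dots\cup B^i_{N-1}$ and $V^i:=B^i_N$, so that $A^i=U^i\cup V^i$. Since a finite union of closures is the closure of the union, the nesting $\overline{B^i_\alpha}\subset B^{i+1}_\alpha$ passes to $\overline{U^i}\subset U^{i+1}$, to $\overline{V^i}\subset V^{i+1}$, and to $\overline{U^i\cap V^i}\subset U^{i+1}\cap V^{i+1}$; moreover the cover $\{B^i_\alpha\cap B^i_N\}_{\alpha<N}$ of $U^i\cap V^i$ has its higher intersections equal to the sets $B^i_\mu$ with $N\in\mu$. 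Thus $U$ (covered by $N-1$ balls) and $U\cap V$ (covered by $N-1$ sets) are again data to which the lemma applies, with strictly fewer sets.

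The key reduction is the Mayer--Vietoris estimate that, for each $k\le n$,
\begin{multline*}
\rank\bigl(H_k(A^0)\to H_k(A^{n+1})\bigr)\le\rank\bigl(H_k(U^0)\to H_k(U^{n+1})\bigr)\\
+\rank\bigl(H_k(V^0)\to H_k(V^{n+1})\bigr)+\rank\bigl(H_{k-1}((U\cap V)^0)\to H_{k-1}((U\cap V)^{n})\bigr).
\end{multline*}
When the sets do not depend on $i$ this is immediate from a single Mayer--Vietoris sequence, by splitting $H_k(A)$ into the image of $H_k(U)\oplus H_k(V)$ and the image of the connecting homomorphism into $H_{k-1}(U\cap V)$; in general one runs this splitting through the commutative ladder formed by the Mayer--Vietoris sequences at the successive levels. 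Granting the estimate, the induction closes: the $U$-term is bounded, via the inductive hypothesis applied to the cover by $N-1$ balls with all $n+2$ levels present, by $\sum_{0\le i\le n,\ \mu\subset\{1,\dots,N-1\}}\rank f^i_\mu$; summing the $V$-terms over $k\le n$ gives at most $\rank f^0_{(N)}$, by factoring through level $1$ as in the base case; and summing the $U\cap V$-terms, which live in degree $k-1\le n-1$ and hence need only the $n+1$ levels $0,\dots,n$, gives at most $\sum_{0\le i\le n,\ N\in\mu}\rank f^i_\mu$ by the inductive hypothesis applied to the cover $\{B^i_\alpha\cap B^i_N\}$. Since every multi-index either omits $N$, equals $(N)$, or contains $N$ together with some index $<N$ --- and reordering a multi-index does not change the associated rank --- these three families exhaust the summands on the right, completing the induction.

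The main obstacle is exactly the displayed estimate in the genuinely non-constant case: a class killed by the top connecting homomorphism lies in the image of $H_k(U^{n+1})\oplus H_k(V^{n+1})$, and there is no reason it comes from the sub-covers at level $0$, so one must transport it downward through the Mayer--Vietoris ladder one level at a time, and it is precisely this transport that consumes the intermediate levels $1,\dots,n$ and forces the hypothesis to pair $n+2$ levels with degrees $\le n$. As this is a well-known result, I would carry out the transport following \cite[Appendix]{G1} and \cite[5.4]{C}; alternatively the same estimate can be organized through the Mayer--Vietoris spectral sequences of the covers, using the extra levels to absorb the successive pages.
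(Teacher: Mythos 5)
The paper offers no proof of this lemma --- it is quoted from \cite[Appendix]{G1} and \cite[5.4]{C} --- so your argument must stand on its own, and it does not: the displayed Mayer--Vietoris estimate, which you correctly identify as the crux, is false as stated. Counterexample: let $X$ be the graph with two vertices $p,q$ joined by $m+1$ edges, so $H_1(X)\cong\mathcal F^m$, and take $n=1$ (levels $0,1,2$, homology in degrees $\le1$). Let $V^0\subset V^1\subset V^2$ be slightly growing disjoint unions of two contractible open stars around $p$ and $q$, let $U^0$ be a disjoint union of $m+1$ contractible open arcs around the middles of the edges chosen so that $U^0\cup V^0=X$, and let $U^1=U^2=X$. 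Then $A^i=X$ for every $i$, so the left-hand side in degree $k=1$ equals $m$; but $H_1(U^0)=H_1(V^0)=0$, so your first two terms vanish, while $(U\cap V)^0$ has $2(m+1)$ contractible components and $(U\cap V)^1=V^1$ has two, so your third term equals $2$. The inequality fails for $m\ge3$. The lemma itself is not contradicted, because its right-hand side contains the one-level rank $\rank f^1_{(1)}=\rank\bigl[H_*(U^1)\to H_*(U^2)\bigr]\ge m$ taken at the \emph{top} of the tower --- precisely the term your estimate discards by using only the full-range rank $\rank\bigl[H_*(U^0)\to H_*(U^{n+1})\bigr]$.

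What the commutative ladder of Mayer--Vietoris sequences actually gives is, for any intermediate level $j$,
\begin{multline*}
\rank\bigl(H_k(A^0)\to H_k(A^{L})\bigr)\le\rank\bigl(H_k(U^{j})\to H_k(U^{L})\bigr)+\rank\bigl(H_k(V^{j})\to H_k(V^{L})\bigr)\\
+\rank\bigl(H_{k-1}((U\cap V)^0)\to H_{k-1}((U\cap V)^{j})\bigr),
\end{multline*}
since a class of $A^0$ whose connecting image dies by level $j$ decomposes into classes coming from $U^j$ and $V^j$ --- at level $j$, not at level $0$. Consequently the union terms only see the truncated range $j,\dots,L$, so you cannot apply the inductive hypothesis ``with all $n+2$ levels present''; and because each peeling step hands essentially disjoint level ranges to the union side and the intersection side, iterating your two-set decomposition would consume a number of levels growing with $N$ rather than the fixed budget $n+2$. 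This is why the arguments of \cite{G1} and \cite{C} are organized around the full Mayer--Vietoris filtration of the cover: it is the degree drop on the intersection side that bounds the depth of the recursion by $n+1$ \emph{independently of $N$}, and the $n+2$ levels are matched against that depth, not against the number of balls. Your diagnosis of the ``transport'' as the main obstacle is accurate, but the inequality you propose to transport to is not the correct target, and no amount of transport will establish it.
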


Using the above theorem and lemma, we can show the uniform boundedness of the Betti numbers of extremal subsets.
The proof is exactly the same as in the case of Alexandrov spaces in \cite[\S5]{Y}.
Let $\beta(\ ;\mathcal F)$ denote the total Betti number $\sum_{i=0}^\infty b_i(\ ;\mathcal F)$ with respect to a coefficient field $\mathcal F$.

\begin{thm}\label{thm:betti}
For given $n$ and $D$, there exists a constant $C(n,D)$ satisfying the following:
for any $M\in\mathcal A(n)$ and extremal subset $E\subset M$ with diameter $\le D$, we have
\[\beta(E;\mathcal F)\le C(n,D),\]
where $\mathcal F$ is an arbitrary field.
\end{thm}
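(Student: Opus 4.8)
The plan is to follow Yamaguchi's proof of uniform boundedness of the Betti numbers of Alexandrov spaces in \cite[\S5]{Y} essentially verbatim, replacing Perelman's stability and fibration theorems for the ambient space everywhere by their extremal-subset counterparts, Theorem \ref{thm:stab}(1) and (2). Fix $p\in E$; since $\diam E\le D$ we have $E\subset B:=B(p,2D)$, all homology below has coefficients in the given field $\mathcal F$, and it suffices to bound $\sum_{i\le n}b_i(E;\mathcal F)$ since the homology of $E$ vanishes above degree $n$ (as $\dim E\le n$). First I would use Theorem \ref{thm:ess} and Remark \ref{rem:ess2} to fix an isotopy covering system $\mathcal B=\{B_{\alpha_1\cdots\alpha_k}\}$ of $B$ all of whose degrees are bounded by a constant $N=N(n,D)$ and whose depth is $\le n$; let $\hat A$ be the set of maximal multi-indices. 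The role of Theorem \ref{thm:stab} is to pin down the homotopy type of each $B_\alpha\cap E$: for $\alpha\in\hat A$, $\dist_{p_\alpha}$ is regular on $\bar B_\alpha\setminus\{p_\alpha\}$, so $B_\alpha\cap E$ is homeomorphic to the cone $T_{p_\alpha}E$ and hence contractible; for $\alpha\notin\hat A$, $\dist_{p_\alpha}$ is regular on the closed annulus $\bar B_\alpha\setminus\hat B_\alpha$, which therefore meets $E$ in a product $(\partial B_\alpha\cap E)\times[0,1]$, so $B_\alpha\cap E$ deformation retracts onto $\hat B_\alpha\cap E$. For each ball $B(q,\rho)$ of $\mathcal B$ I would moreover fix, inside the critical-point-free annulus of $\dist_q$ of definite width that comes from the definition of an isotopy covering system, a nested chain of concentric balls $B^{(0)}\subset B^{(1)}\subset\cdots\subset B^{(n+1)}$ with $\bar B^{(i)}\subset B^{(i+1)}$, arranged so that the level-one balls $\{B_{\alpha_1}^{(0)}\}$ still cover $E$ and, for every non-maximal $\alpha$, the balls $\{B_{\alpha j}^{(0)}\}_j$ still cover an isotopic subball of $B_\alpha$; by Theorem \ref{thm:stab}(2), every inclusion $B^{(i)}\cap E\hookrightarrow B^{(j)}\cap E$ is then a homotopy equivalence.

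The heart of the proof is a reverse induction on the level $k$, producing a constant $b_k=b_k(n,D)$ with $\beta(B_\alpha\cap E;\mathcal F)\le b_k$ for every $\alpha$ in $\mathcal B$ of level $k$; by the homotopy equivalences above, this is the same as bounding the rank of $H_*(B_\alpha^{(0)}\cap E)\to H_*(B_\alpha^{(n+1)}\cap E)$. For $\alpha\in\hat A$ it holds with $b_k=1$, since $B_\alpha\cap E$ is contractible. For a non-maximal $\alpha$ of level $k$, we have $B_\alpha\cap E\simeq\hat B_\alpha\cap E$ and $\hat B_\alpha$ is covered by the at most $N$ level-$(k{+}1)$ balls $B_{\alpha j}$; applying Lemma \ref{lem:rank} with the open sets $B_{\alpha j}^{(i)}\cap E$ ($j$ ranging over these balls, $i$ over $0,\dots,n+1$) bounds the rank of $H_*(B_\alpha^{(0)}\cap E)\to H_*(B_\alpha^{(n+1)}\cap E)$ by a sum $\sum_{0\le i\le n,\ \mu}\rank f_\mu^i$ of at most $2^N(n+1)$ terms. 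The terms with $|\mu|=1$ are each $\le b_{k+1}$ by the inductive hypothesis; granting a uniform bound $c=c(n,D)$ on the remaining terms (next paragraph), we get $b_k\le 2^N(n+1)\max(b_{k+1},c)$, so that after the finitely many steps $k=n,n-1,\dots,1$ a constant $b_1=b_1(n,D)$ exists. Finally, applying Lemma \ref{lem:rank} with the open sets $B_{\alpha_1}^{(i)}\cap E$ ($\alpha_1$ ranging over the level-one balls, $i$ over $0,\dots,n+1$), whose union is all of $E$ at every stage $i$, bounds $\beta(E;\mathcal F)$ itself by $2^N(n+1)\,b_1=:C(n,D)$.

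The one step requiring genuine work is the uniform bound $c=c(n,D)$ on the terms $\rank f_\mu^i$ with $|\mu|\ge2$, i.e.\ on the ranks of the inclusions of the finite intersections $B_{\alpha j_1}^{(i)}\cap\cdots\cap B_{\alpha j_m}^{(i)}\cap E$ into the next stage. This is exactly the situation handled in \cite[\S5]{Y} for the ambient spaces: choosing the radii of the various nested chains successively, from the deepest level upward, narrowly enough, one arranges that each such nonempty intersection carries an auxiliary essential covering of depth $\le n$ and degree $\le N$ and, up to homotopy through the next stage, is controlled by it, so that Theorem \ref{thm:stab} and the reverse induction apply to it in turn. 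Since everything here is formal once the local structure statements Theorem \ref{thm:stab}(1)--(2) are available, I expect the main obstacle to be precisely this bookkeeping — the simultaneous, compatible choice of the nested radii that makes every intersection term in Lemma \ref{lem:rank} controllable — rather than any new geometric input specific to extremal subsets.

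Corollary \ref{cor:nonneg}(2) then follows as in \S\ref{sec:num}: for nonnegatively curved $M$, rescaling makes the constant in Theorem \ref{thm:ess}, hence in the bound just obtained, independent of $D$, so $\beta(B(p,D)\cap E;\mathcal F)\le C(n)$ for every $D$ and every extremal $E$ in such an $M\in\mathcal A(n)$; and when $M$ is noncompact, Lemma \ref{lem:nonneg} together with Theorem \ref{thm:stab}(2) shows that $E$ is homotopy equivalent to $B(p,R+1)\cap E$ for all large $R$, whence $\beta(E;\mathcal F)\le C(n)$.
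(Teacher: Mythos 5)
Your overall architecture is exactly the paper's: an isotopy covering system of depth $\le n$ with uniformly bounded degrees, concentric dilates nested inside the critical-point-free annuli, Theorem \ref{thm:stab}(1)--(2) to make the deepest balls contractible on $E$ and the annuli products, reverse induction on the level, and Gromov's Lemma \ref{lem:rank} at each stage. However, the one step you explicitly flag as ``the genuine work'' and leave unresolved --- bounding $\rank f_\mu^i$ for $|\mu|\ge2$ --- is a real gap as written, and the mechanism you sketch for it (equipping each nonempty intersection $B_{\gamma_1}^i\cap\dots\cap B_{\gamma_t}^i$ with an auxiliary essential covering and controlling it ``up to homotopy through the next stage'') is not what is needed and would be hard to carry out: intersections of metric balls have no controlled local structure, and nothing in Theorem \ref{thm:ess} or \ref{thm:stab} applies to them directly.

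The paper's resolution is much simpler and requires no extra covering or bookkeeping: choose the dilation factor between consecutive stages large (the paper uses $\lambda_i=10^i$), let $B_{\gamma_s}$ be of minimal radius among $B_{\gamma_1},\dots,B_{\gamma_t}$, and observe the chain of inclusions
\[B_{\gamma_1}^i\cap\dots\cap B_{\gamma_t}^i\subset B_{\gamma_s}^i\subset\tfrac12B_{\gamma_s}^{i+1}\subset B_{\gamma_1}^{i+1}\cap\dots\cap B_{\gamma_t}^{i+1},\]
the last inclusion holding because the stage-$i$ dilates pairwise intersect and $B_{\gamma_s}$ is smallest. Hence $f_\mu^i$ factors through $H_*(B_{\gamma_s}^i|_E)\to H_*(\frac12B_{\gamma_s}^{i+1}|_E)$, whose rank equals $\beta(B_{\gamma_s}|_E)$ by Theorem \ref{thm:stab}(2) and is $\le C(n,D)$ by the induction hypothesis on the level --- so $c=b_{k+1}$ and no separate argument for the higher-multiplicity terms is required. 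With this substitution your proof closes; the remainder (the base case via contractibility of $T_{p_\alpha}E$, the final application of Lemma \ref{lem:rank} to the level-one balls, and the deduction of Corollary \ref{cor:nonneg}(2) via rescaling and Lemma \ref{lem:nonneg}) agrees with the paper.
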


Note that $b_i(E;\mathcal F)=0$ for all $i>m=\dim E$ (this follows from the general dimension theory).
In what follows, we omit $\mathcal F$ and only consider homology groups of dimension $\le m$.

\begin{proof}
For an open metric ball $B$ of radius $r$, let $\lambda B$ denote the concentric open metric ball of radius $\lambda r$.
By Theorem \ref{thm:ess}, there exists an isotopy covering system $\mathcal B=\{B_{\alpha_1\cdots\alpha_k}\}$ of $E$ with depth $\le n$ whose degrees $N_k$ are bounded above by $C(n,D)$.
Set $\lambda_i:=10^i$ and $B_{\alpha_1\cdots\alpha_k}^i:=\lambda_iB_{\alpha_1\cdots\alpha_k}$ for $0\le i\le m+1$.
In view of Theorem \ref{thm:res}(2)(i) and the proof of Theorem \ref{thm:ess}, we may assume that
\begin{itemize}
\item $B_{\alpha_1\cdots\alpha_k}^{m+1}\subset B_{\alpha_1\cdots\alpha_{k-1}}$ for $1\le\alpha_k\le N_k(\alpha_1\cdots\alpha_{k-1})$;
\item $B_{\alpha_1\cdots\alpha_k}^i$ is an isotopic subball of $B_{\alpha_1\cdots\alpha_k}^{i+1} $ for $0\le i\le m$.
\end{itemize}
Let $\mathcal U=\{B_\alpha\}_{\alpha\in\hat A}$ be the essential covering associated with $\mathcal B$.

For a subset $X$ of $M$, let $X|_E$ denote $X\cap E$.
For $\alpha=\alpha_1\cdots\alpha_l\in\hat A$ and $1\le k\le l$, we prove by reverse induction on $k$ that
\[\beta(B_{\alpha_1\cdots\alpha_k}|_E)\le C(n,D).\]
The case $k=l$ is clear from Theorem \ref{thm:stab}(1).
Consider the case $k\le l-1$.
Recall that $\{B_{\alpha_1\cdots\alpha_{k+1}}\}_{\alpha_{k+1}=1}^{N_{k+1}(\alpha_1\cdots\alpha_k)}$ is a covering of an isotopic subball $\hat B_{\alpha_1\cdots\alpha_k}$ of $B_{\alpha_1\cdots\alpha_k}$.
Fix $(\alpha_1,\dots,\alpha_k)$ and set
\[B:=B_{\alpha_1\cdots\alpha_k},\quad\hat B:=\hat B_{\alpha_1\cdots\alpha_k},\quad B_\alpha:=B_{\alpha_1\cdots\alpha_k\alpha},\quad B_{\alpha}^i:=\lambda_iB_\alpha\]
for $1\le\alpha\le N_{k+1}(\alpha_1\cdots\alpha_k)$ and $0\le i\le m+1$.
Set $A^i:=\bigcup_{\alpha=1}^{N_{k+1}}B_{\alpha}^i$.
From the inclusions $\hat B|_E\subset A^0|_E\subset A^{m+1}|_E\subset B|_E$ and Theorem $\ref{thm:stab}$(2), we have
\[\beta(\hat B|_E)=\beta(B|_E)\le\rank\left[H_*(A^0|_E)\to H_*(A^{m+1}|_E)\right].\]
We estimate the right-hand side of the above inequality.
Let $\mu=(\gamma_1,\dots,\gamma_t)$ be such that $B_{\gamma_1}^i\cap\dots\cap B_{\gamma_t}^i\neq\emptyset$.
Suppose $B_{\gamma_s}$ has minimal radius among $\{B_{\gamma_j}\}_{j=1}^t$.
Then the following inclusions hold:
\[B_{\gamma_1}^i\cap\dots\cap B_{\gamma_t}^i\subset B_{\gamma_s}^i\subset\frac12B_{\gamma_s}^{i+1}\subset B_{\gamma_1}^{i+1}\cap\dots\cap B_{\gamma_t}^{i+1}.\]
Let $f_\mu^i:H_*((B_{\gamma_1}^i\cap\dots\cap B_{\gamma_t}^i)|_E)\to H_*((B_{\gamma_1}^{i+1}\cap\dots\cap B_{\gamma_t}^{i+1})|_E)$ be the inclusion homomorphism.
Then we have
\[\rank f_\mu^i\le\rank\left[H_*(B_{\gamma_s}^i|_E)\to H_*(\frac12B_{\gamma_s}^{i+1}|_E)\right]=\beta(B_{\gamma_s}|_E)\le C(n,D),\]
where Theorem \ref{thm:stab}(2) and the induction hypothesis are used.
By Lemma \ref{lem:rank}, we obtain
\[\rank\left[H_*(A^0|_E)\to H_*(A^{m+1}|_E)\right]\le (m+1)2^{C(n,D)}C(n,D).\]
This completes the induction step of the reverse induction.

Finally, since $E=\bigcup_{\alpha_1=1}^{N_1}B_{\alpha_1}|_E=\bigcup_{\alpha_1=1}^{N_1}B_{\alpha_1}^{m+1}|_E$, applying Lemma \ref{lem:rank} again, we conclude $\beta(E)\le C(n,D)$.
\end{proof}

Corollary \ref{cor:nonneg}(2) immediately follows from Lemma \ref{lem:nonneg}.

\begin{proof}[Proof of Corollary \ref{cor:nonneg}(2)]
Let $M$ be an $n$-dimensional Alexandrov space of nonnegative curvature, $E\subset M$ a (noncompact) extremal subset, and $p\in M$.
Then we can show that
\[\rank\left[H_*(B(p,D)\cap E)\to H_*(E)\right]\le C(n)\]
for any $D>0$.
Indeed, take an isotopy covering system for $B(p,D)\cap E$ instead of $E$, and repeat the above argument.
Only the last part of the proof is different: we estimate the rank of the inclusion homomorphism $H_*(B(p,D)\cap E)\to H_*(E)$ from the inclusions $B(p,D)\cap E\subset\bigcup_{\alpha_1=1}^{N_1}B_{\alpha_1}|_E\subset\bigcup_{\alpha_1=1}^{N_1}B_{\alpha_1}^{m+1}|_E\subset E$.
By rescaling, the constant $C(n)$ can be chosen independently from $D$.

Furthermore, Lemma \ref{lem:nonneg} and the fibration theorem imply that the inclusion $B(p,R)\cap E\hookrightarrow E$ is a homotopy equivalence for sufficiently large $R>0$.
Thus we have $\beta(E)\le C(n)$.
\end{proof}

\section{Volumes of extremal subsets}\label{sec:vol}

In this section, we prove Theorem \ref{thm:main}(3) and Corollary \ref{cor:precpt}.

Our main tool here is the gradient-exponential map of Perelman--Petrunin \cite{PP2}.
The reader is advised to first review \S\ref{sec:semi}.

First, we study local surjectivity of the restriction of the gradient-exponential map to an extremal subset.
Note that $\gexp_p|_{T_pE}:T_pE\to E$ is not surjective in general.
The local surjectivity of the gradient flow in an Alexandrov space was stated in \cite[\S2.2]{Pet2}.
We need its generalization to extremal subsets.

\begin{lem}[cf.\ {\cite[\S2.2 property (3)]{Pet2}}]\label{lem:flow}
Let $f:M\to\mathbb R$ be a semiconcave function on an Alexandrov space $M$ and $\Phi_f^t:M\to M$ its gradient flow (we assume that $\Phi_f^t$ is defined for all $x\in M$ and $t\ge0$).
Let $E$ be an extremal subset of $M$.
Then for any $y\in E$, there exist $x\in E$ and $t>0$ such that $\Phi_f^t(x)=y$.
\end{lem}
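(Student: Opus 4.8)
The plan is to obtain the first assertion directly from property~3 in \cite[\S2.2]{Pet2} and the second by re-running the proof of that property inside $E$. If $\nabla_yf=o$, then the constant curve at $y$ is the gradient curve through $y$, so $\Phi_f^t(y)=y$ for every $t>0$ and we may take $x=y$. Assume instead $|\nabla_yf|>0$. By local semiconcavity of $f$ (whence $|\nabla f|$ is locally bounded) and lower semicontinuity of $|\nabla f|$, pick $\rho>0$ and $L<\infty$ with $0<|\nabla f|\le L$ on $\bar B(y,\rho)$; then for all small $t>0$ the map $\Phi_f^t$ is defined on $\bar B(y,\rho)$, displaces each point by at most $Lt$, and strictly increases $f$. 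Thus $\Phi_f^t$ is a $C^0$-small, $f$-increasing perturbation of the identity near $y$, and property~3 in \cite[\S2.2]{Pet2} gives $y\in\Phi_f^t(\bar B(y,\rho))$ for all sufficiently small $t>0$; any preimage of $y$ is the desired $x$.

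For the second assertion I would carry out the identical argument with $M$ replaced by the extremal subset $E$. The gradient flow preserves $E$, so $\Phi_f^t|_E\colon E\to E$ is a continuous self-map of $E$ with the same displacement bound $Lt$ on $\bar B(y,\rho)\cap E$ and the same strict monotonicity of $f$; and the local topology of $E$ at $y$ is controlled by the fibration and stability theorems for extremal subsets (\S\ref{sec:ex}) in the same way the local topology of an Alexandrov space at a point is, so a neighborhood of $y$ in $E$ is homeomorphic to the tangent cone $T_yE$. Feeding these facts into the proof of property~3 should then produce $x\in E$ and $t>0$ with $\Phi_f^t(x)=y$. It is worth noting where the new input is really used: if $x\in M$ and $t_0>0$ satisfy $\Phi_f^{t_0}(x)=y$ and the gradient curve $\alpha$ from $x$ lies in $E$ on some interval $[s_0,t_0]$ with $s_0<t_0$ (in particular if $\alpha$ lies in $E$ near $y$), then $\alpha(s_0)\in E$ and $\Phi_f^{t_0-s_0}(\alpha(s_0))=y$ already finishes the proof; so the genuinely new case is the one in which $\alpha$ meets $E$ only at its endpoint $y$, which is exactly what the $E$-version of property~3 handles.

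The main obstacle is the topological heart shared by property~3 and its extremal-subset analogue: forcing $y$ into the image of a $C^0$-small, $f$-increasing continuous self-map of a neighborhood of $y$ in $M$, respectively in $E$. When the relevant space of directions is noncontractible (e.g.\ a sphere) this amounts to a degree- or obstruction-type argument; but when $\Sigma_yE$ is contractible --- which can happen for extremal subsets --- one has to combine the local conical structure with the strict monotonicity of $f$ along the flow. Checking that the conical charts for $E$ near $y$ are compatible with the gradient-flow displacement estimates, uniformly as $t\to0$, is the delicate technical point.
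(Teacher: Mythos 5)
Your first assertion is fine (it is exactly property~3 of \cite[\S2.2]{Pet2}, and the paper recovers it as the case $E=M$), but the second assertion --- the only new content of the lemma --- is left with a genuine gap. Your plan is to ``re-run the proof of property~3 inside $E$,'' and you yourself flag the point where this breaks: when $\Sigma_yE$ is contractible (equivalently, when the local homology of $E$ at $y$ vanishes), there is no degree- or obstruction-theoretic reason why a $C^0$-small self-map of a neighborhood of $y$ in $E$ must hit $y$, and ``strict monotonicity of $f$ along the flow'' does not rescue this: a small $f$-increasing self-map of a cone over a contractible link can perfectly well omit a non-critical point. So the case you identify as delicate is not a technical checking problem; it is the case your argument cannot handle, and it genuinely occurs (e.g.\ at boundary-type points of the stratification of $E$).

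The paper's proof supplies the missing idea: induction on $\dim E$ over the canonical stratification. If $y$ lies in a stratum of dimension $k<m=\dim E$, then the closure $\overline{E^{(k)}}$ is itself an extremal subset (hence flow-invariant), and the inductive hypothesis applied to it produces $x\in\overline{E^{(k)}}\subset E$. If $y$ lies in the top stratum $E^{(m)}$, then $E$ is an $m$-manifold near $y$, so $H_m(E,E\setminus\{y\})\cong\mathbb Z$; if no $x\in E$ and $t>0$ had $\Phi_f^t(x)=y$, then $\Phi_f^t(E)\subset E\setminus\{y\}$ for all $t>0$, and since $\Phi_f^t|_E$ is homotopic to $\mathrm{id}_E$ through maps of the pair $(E,E\setminus\{y\})$, the identity on $H_m(E,E\setminus\{y\})$ would factor through $H_m(E\setminus\{y\},E\setminus\{y\})=0$ --- a contradiction. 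Note this is a global argument on $E$ using only flow-invariance and the homotopy $\Phi_f^{st}$; none of the displacement estimates, conical charts, or uniformity as $t\to0$ that you worry about are needed. Also, your observation that one is done whenever the gradient curve through $y$ lies in $E$ on an interval before $y$ does not reduce the problem: gradient curves entering $E$ from outside only at their endpoint are precisely the generic situation, and flow-invariance goes the other way (curves starting in $E$ stay in $E$).
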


\begin{proof}
We prove it by induction on $\dim E$.
The case $\dim E=0$ is clear since every gradient flow fixes extremal points.
Suppose that the claim holds for extremal subsets of dimension $\le m-1$ and let $\dim E=m$.
Let $E^{(k)}$ be the $k$-dimensional stratum of the canonical stratification of $E$ (see \S\ref{sec:ex}).
Then the closure $\overline{E^{(k)}}$ is an extremal subset of dimension $\le m-1$.
By the induction hypothesis, the claim holds for all $y\in E\setminus E^{(m)}$.
Suppose that the claim does not hold for some $y\in E^{(m)}$.
Then $\Phi_f^t$ maps $E$ into $E\setminus\{y\}$ for all $t>0$.
Since $\Phi_f^t$ is homotopic to $\Phi_f^0=\mathrm{id}_M$, the following commutative diagram holds:
\[\xymatrix{
H_*(E,E\setminus\{y\})\ar[rr]^{\text{\normalsize$\mathrm{id}$}}\ar[dr]_{\text{\normalsize$(\Phi_f^t)_*$}}&&H_*(E,E\setminus\{y\})\\
&H_*(E\setminus\{y\},E\setminus\{y\})\ar[ur]_{\text{\normalsize$\iota_*$}}\ar@{}[u]|{\text{\LARGE$\circlearrowleft$}}&
}.\]
On the other hand, since $\dim E=m$, the top stratum $E^{(m)}$ is an $m$-dimensional topological manifold that is open in $E$.
Therefore, for $y\in E^{(m)}$, the local homology group $H_m(E,E\setminus\{y\})$ is nontrivial.
This contradicts the diagram.
\end{proof}

Using the above lemma, we give a sufficient condition for the gradient-exponential map restricted to an extremal subset to be locally surjective.
Note that we use the gradient-exponential map for the lower curvature bound $-1$ (see \S\ref{sec:semi}).

Recall that for $p,q\in M$, $q'_p\in\Sigma_p$ denotes one of the directions of minimal geodesics from $p$ to $q$.
Similarly, for a closed subset $A\subset M$, $A'_p\subset\Sigma_p$ denotes the set of all directions of minimal geodesics from $p$ to $A$.

\begin{prop}\label{prop:gexp}
Let $M$ be an Alexandrov space with curvature $\ge-1$, $E$ an extremal subset of $M$, and $p\in M$.
\begin{enumerate}
\item Suppose $\dist_p$ has no critical points on $\bar B(p,r)\setminus\{p\}$.
Then there exists $R>0$ such that
\[\gexp_p(B(o,R)\cap T_pE)\supset B(p,r)\cap E.\]
Note that if $B(p,r)\cap E\neq\emptyset$, then $p\in E$ (see Lemma \ref{lem:num}).
\item Suppose $\dist_p$ has no critical points on $A(p;r_1,r_2)$.
Then there exists $R>0$ such that
\[\gexp_p(A(o;r_1,R)\cap K((\partial B(p,r_1)\cap E)'_p))\supset A(p;r_1,r_2)\cap E,\]
where $K((\partial B(p,r_1)\cap E)'_p)$ is a subcone of $T_p=K(\Sigma_p)$.

Furthermore, if we define a map $G_p^{(r_1,R)}:B(p,r_1)\to B(p,R)$ by
\[G_p^{(r_1,R)}(x):=\gexp_p\left(\frac R{r_1}|px|x'_p\right)\]
for the above $R$, then we have
\[G_p^{(r_1,R)}(B(p,r_1)\cap E)\supset B(p,r_2)\cap E.\]
Note that this definition does not depend on the choice of $x'_p$ (see \S\ref{sec:semi}).
\end{enumerate}
\end{prop}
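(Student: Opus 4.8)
The plan is to reduce all the inclusions to one statement — that every point of the set in question lies on a radial curve issuing from $p$ in an admissible direction, reached within a uniformly bounded parameter — and to prove that statement by running the gradient flow backwards. Throughout I set $f:=\cosh\circ\dist_p-1$; this is semiconcave, and by \eqref{eq:re} its gradient flow $\Phi_f^t$ is the radial flow from $p$ up to a time change. Two elementary facts will be used repeatedly: $\dist_p$ is non-decreasing along $\Phi_f^t$ and along radial curves, and strictly increasing as long as the trajectory stays in a region free of critical points of $\dist_p$; and $\gexp_p$ carries a radial curve from $p$ into $E$ whenever its initial direction lies in $\Sigma_pE$, as well as — in the setting of (2) — whenever it is the direction of a minimal geodesic from $p$ to a point of $\partial B(p,r_1)\cap E$, along the parameter range $[r_1,\infty)$. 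In (1) we may assume $\Sigma_pE\neq\emptyset$, since otherwise $B(p,r)\cap E\subset\{p\}$ by Lemma \ref{lem:num}.

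Fix $y\neq p$ in $B(p,r)\cap E$ for (1), resp.\ in $A(p;r_1,r_2)\cap E$ for (2). First I would trace $y$ backwards along $\Phi_f$: by Lemma \ref{lem:flow} (applied to a globally $\lambda$-concave function agreeing with $f$ near $\bar B(p,|py|)$, so the flow is everywhere defined) there is $z\in E$ with $\Phi_f^t(z)=y$, and since any $z\neq p$ in the critical-point-free region is a non-critical point of $f$ this can be iterated, each step strictly decreasing $\dist_p(z)$. Let $a$ be the infimum of $\dist_p$ over all ``ancestors'' $z\in E$ of $y$ — in case (2), over ancestors with $\dist_p(z)\ge r_1$. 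A minimizing sequence, a subsequential limit, the continuity of $\Phi_f$, and one further application of Lemma \ref{lem:flow} force $a=0$ in case (1) and $a=r_1$ in case (2); the limiting step is legitimate because on the compact region $\{\dist_p\in[a/2,|py|]\}$, resp.\ $\{\dist_p\in[r_1,r_2]\}$, the gradient $|\nabla f|=\sinh(\dist_p)\,|\nabla\dist_p|$ is bounded below, which keeps the relevant flow times bounded. This produces $z_i\in E$ with $\Phi_f^{T_i}(z_i)=y$ and $z_i\to p$ for (1), resp.\ $z_i\to z_\infty\in\partial B(p,r_1)\cap E$ for (2); the directions $(z_i)'_p$ accumulate at some $\xi$, which lies in $\Sigma_pE$ by the very definition of that set in case (1), and can be taken equal to $(z_\infty)'_p\in(\partial B(p,r_1)\cap E)'_p$ in case (2). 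By \eqref{eq:re} each $\beta_{(z_i)'_p}$ meets $y$, say at parameter $s_i$, and by continuous dependence of radial curves on initial data \cite[\S3]{PP2} the limit curve $\beta_\xi$ meets $y$ at $\lim s_i$ — \emph{provided} the $s_i$ stay bounded.

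The boundedness of the $s_i$, equivalently the existence of a finite $R$ realizing the desired inclusions, is the step I expect to be the main obstacle. Along all the radial curves involved $\dist_p$ increases monotonically and stays $\le|py|$ until $y$ is reached, so these curves remain in the critical-point-free region, where $|\nabla\dist_p|$ is bounded below once one is bounded away from $p$; near $p$ the property $|\nabla_x\dist_p|\to1$ as $x\to p$ makes the radial curve nearly a geodesic ray from $p$, and the $1/s$-singularity of \eqref{eq:re} absorbs the (possibly diverging) gradient times $T_i$ into a bounded range of radial parameter. In case (2) this is easier, since the whole arc from $z_\infty$ to $y$ lies in the compact set $A(p;r_1,r_2)$, which is disjoint from $p$, so one immediately gets a bound like $s_y\le r_1+(r_2-r_1)/(\tanh(r_1)c^2)$. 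A compactness argument over $\bar B(p,r)\cap E$, resp.\ $A(p;r_1,r_2)\cap E$, then produces a single $R$ valid for all $y$. This establishes (1) and the first inclusion of (2).

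Finally, for the last assertion of (2), take $y\in B(p,r_2)\cap E$. If $|py|\ge r_1$, then by the inclusion just proved $y=\beta_\xi(s_y)$ with $\xi\in(\partial B(p,r_1)\cap E)'_p$ and $r_1\le s_y\le R$, and $\beta_\xi$ agrees on $[0,r_1]$ with a minimal geodesic from $p$; hence $x:=\beta_\xi(\tfrac{r_1}{R}s_y)$ satisfies $|px|=\tfrac{r_1}{R}s_y<r_1$ and $x'_p=\xi$, so that
\[G_p^{(r_1,R)}(x)=\gexp_p\!\bigl(\tfrac{R}{r_1}\cdot\tfrac{r_1}{R}s_y\cdot\xi\bigr)=\gexp_p(s_y\xi)=y .\]
One still has to check $x\in E$; this is clear when $p\in E$ (then $\xi\in\Sigma_pE$, so $\beta_\xi$, in particular $x$, lies in $E$), and in general it should come out of the construction of $\xi$ in the previous step, where the admissible directions arise from minimal geodesics produced as limits of gradient curves of $f$ from points of $B(p,r_1)\cap E$. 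If $|py|<r_1$, then $y\in B(p,r_1)\cap E$, and a preimage is found similarly, by realizing $y$ on a radial curve $\beta_\eta$ from $p$ in a direction $\eta\in\Sigma_pE$ (the argument of (1), now carried out inside $B(p,r_1)$) at a bounded parameter $s_y$ and taking $x:=\beta_\eta(\tfrac{r_1}{R}s_y)$, which lies in $B(p,r_1)\cap E$ for $R$ large and again satisfies $G_p^{(r_1,R)}(x)=y$. The secondary technical points are these $x\in E$ verifications, the realizability of $y$ on such radial curves in the inner region, and the uniformity of the various choices of $R$.
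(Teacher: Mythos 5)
Your treatment of part (1) and of the first inclusion in part (2) follows essentially the same route as the paper: pull the target point back inside $E$ along the gradient flow of $f=\cosh\circ\dist_p-1$ by iterating Lemma \ref{lem:flow}, reparametrize via \eqref{eq:re} to radial curves, bound the radial parameter, and pass to a limit direction in $\Sigma_pE$ (resp.\ in $(\partial B(p,r_1)\cap E)'_p$). The parameter bound, which you correctly single out as the crux, is obtained in the paper not by your near-$p$ heuristic but by integrating $|p\beta(s)|'=\frac{\tanh|p\beta(s)|}{\tanh s}\,|\nabla_{\beta(s)}\dist_p|^2\ge c^2\tanh|p\beta(s)|/\tanh s$ over $[\sigma,s_i]$ (giving \eqref{eq:si}) and using that $|p\gexp_p(\sigma\,\cdot\,)|$ is a positive continuous function on the compact $\Sigma_p$; no claim about $|\nabla_x\dist_p|\to1$ as $x\to p$ is needed, only the uniform lower bound $c$ from lower semicontinuity. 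Up to this sketchiness, that half is fine.

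The genuine gap is in the last assertion of (2). Your candidate preimage $x=\beta_\xi\bigl(\tfrac{r_1}{R}s_y\bigr)$ is an interior point of a minimal geodesic from $p$ to a point $z_\infty\in\partial B(p,r_1)\cap E$, and there is no reason for such a point to lie in $E$: extremal subsets are not totally geodesic, $p$ need not belong to $E$, and the direction $\xi=(z_\infty)'_p$ is merely a direction \emph{pointing at} $E$ — it need not lie in $\Sigma_pE$ even when $p\in E$ (so the implication ``$p\in E\Rightarrow\xi\in\Sigma_pE$'' you invoke is false), and nothing in the construction of $\xi$ as a limit of directions $(z_i)'_p$ puts the geodesic segment $pz_\infty$ inside $E$. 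The sub-case $|py|<r_1$ has a second, independent problem: the hypothesis of (2) controls critical points of $\dist_p$ only on $A(p;r_1,r_2)$, so the argument of (1) cannot be run inside $B(p,r_1)$. The paper resolves both issues with a different idea: flow backwards from $y_0$ (taking $y_0=z$ when $|pz|<r_1$) by an arbitrarily large time $T$ to get $x_0\in E$ with $\Phi_f^T(x_0)=y_0$, and observe that $G_p^{(r_1,R)}$ maps the gradient curve $\alpha(t)=\Phi_f^t(x_0)$ into itself, advancing each point along $\alpha$ by a flow time at most $\log(\sinh R/\sinh r_1)$ by \eqref{eq:re}. Hence $G_p^{(r_1,R)}(\alpha(0))$ lies strictly before $z$ on $\alpha$ once $T$ is large, while $G_p^{(r_1,R)}(y_0)=\gexp_p\bigl(\tfrac{R}{r_1}|py_0|(y_0)'_p\bigr)$ lies at or beyond $z=\gexp_p(s_0(y_0)'_p)$ since $\tfrac{R}{r_1}|py_0|\ge s_0$; the intermediate value theorem then produces $t$ with $G_p^{(r_1,R)}(\alpha(t))=z$, and $\alpha(t)$ lies in $E\cap B(p,r_1)$ automatically because gradient curves starting in $E$ stay in $E$ and $\dist_p$ is nondecreasing along them. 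Your proof of this final inclusion needs to be replaced by an argument of this kind.
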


\begin{proof}
First we show (1).
Let us denote by $|\cdot|$ the norm on the tangent cone.
By the lower semicontinuity of $|\nabla\dist_p|$, there exists a constant $c>0$ such that $|\nabla\dist_p|>c$ on $\bar B(p,r)\setminus\{p\}$ (note that $|\nabla\dist_p|$ is close to $1$ near $p$).
Consider the semiconcave function $f=\cosh\circ\dist_p-1$, which satisfies the assumption of Lemma \ref{lem:flow} (i.e., the gradient flow of $f$ is defined for all points and time).
Let $z\in B(p,r)\cap E$.
It follows from Lemma \ref{lem:flow} by contradiction that there is a sequence $y_i\in E$ converging to $p$ such that $\Phi_f^{t_i}(y_i)=z$ for some $t_i$.
By reparametrization, $\gexp_p(s_i(y_i)'_p)=z$ for some $s_i$.
We show that $s_i$ is uniformly bounded above by some $R>0$.
Then, by compactness, we get $\gexp_p(s_0\xi_0)=z$ for some $s_0\le R$ and $\xi_0\in\Sigma_pE$, as desired.
Set $\beta_i(s)=\gexp_p(s(y_i)'_p)$.
From the differential equation \eqref{eq:rad}, we have
\[|p\beta_i(s)|'=\frac{\tanh|p\beta_i(s)|}{\tanh s}\cdot|\nabla_{\beta_i(s)}\dist_p|^2.\]
Together with the assumption $|\nabla\dist_p|>c$, this implies
\[\frac{|p\beta_i(s)|'}{\tanh|p\beta_i(s)|}\ge\frac{c^2}{\tanh s}.\]
Integrating this inequality over the interval $[\sigma,s_i]$ for some fixed $\sigma>0$, we obtain
\begin{equation}\label{eq:si}
\log\sinh s_i\le c^{-2}\left(\log\sinh r-\log\sinh|p\beta_i(\sigma)|\right)+\log\sinh\sigma.
\end{equation}
Since $|p\gexp_p(\sigma\,\cdot\,)|$ is a positive continuous function on $\Sigma_p$, we conclude that $s_i$ is uniformly bounded above.

Next we show (2).
The first statement follows from Lemma \ref{lem:flow} in the same way as (1).
Let us show the second.
Let $z\in B(p,r_2)\cap E$.
If $z\in A(p;r_1,r_2)\cap E$, then by the first statement, we have
\[\Phi_f^{t_0}(y_0)=\gexp_p(s_0(y_0)'_p)=z\]
for some $y_0\in\partial B(p,r_1)\cap E$, $t_0\ge0$, and $r_1\le s_0\le R$.
Even if $z\in B(p,r_1)\cap E$, the same equation holds for $y_0=z$, $t_0=0$, and $s_0=|pz|$.
It follows from Lemma \ref{lem:flow} by contradiction that for any $T>0$, there exists $x_0\in E$ such that $\Phi_f^T(x_0)=y_0$.
Suppose $T$ is sufficiently large (to be determined later) and consider the gradient curve $\alpha(t)=\Phi_f^t(x_0)$.
Then $G_p(\alpha(t))$ still lies on the curve $\alpha$ (for notational simplicity we write $G_p$ instead of $G_p^{(r_1,R)}$).
We show that $G_p(\alpha(t))$ is before $z$ on $\alpha$ when $t=0$ and after $z$ when $t=T$.
Then the claim follows from the intermediate value theorem.
Note that $G_p(\alpha(0))=G_p(x_0)$ and $G_p(\alpha(T))=G_p(y_0)$.
By the reparametrization \eqref{eq:re}, we can express $G_p(x_0)=\Phi_f^{\tau_0}(x_0)$, where
\begin{align*}
\tau_0&=\int_{|px_0|}^{\frac R{r_1}|px_0|}\frac{ds}{\tanh s\cosh|p\gexp_p(s(x_0)'_p)|}\\
&\le \int_{|px_0|}^{\frac R{r_1}|px_0|}\frac{ds}{\tanh s}\\
&\le \log\frac{\sinh R}{\sinh r_1}.
\end{align*}
Thus, if $T$ is sufficiently large, $G_p(x_0)$ is before $z=\Phi_f^{T+t_0}(x_0)$.
On the other hand, since $R\ge s_0$, $G_p(y_0)=\gexp(R(y_0)'_p)$ is after $z=\gexp_p(s_0(y_0)'_p)$.
This completes the proof.
\end{proof}

Next we provide a uniform estimate on the radius $R$ of Proposition \ref{prop:gexp} in terms of other geometric quantities.
The following technique was used in \cite[3.3]{PP2} to prove the convergence of the parameters of radial curves.
This controls the speeds of radial curves.

\begin{lem}\label{lem:gexp}
Let $M$ be an Alexandrov space with curvature $\ge-1$ and $p\in M$.
\begin{enumerate}
\item Assume that there exists $c>0$ such that $|\nabla\dist_p|>c$ on $\bar B(p,r)\setminus\{p\}$.
Assume further that there exist $\rho>0$ and $\theta<\pi/2$ such that $(\partial B(p,\rho))'_p$ is $\theta$-dense in $\Sigma_p$.
Then there exists $R=R(r,c,\rho,\theta)>0$ (depending only on $r$, $c$, $\rho$, and $\theta$) such that
\[\gexp_p^{-1}(B(p,r))\subset B(o,R).\]
\item  Assume that there exists $c>0$ such that $|\nabla\dist_p|>c$ on $A(p;r_1,r_2)$.
Assume further that there exist $\rho>0$ and $\theta<\pi/2$ such that for any $y\in\partial B(p,r_1)$ there is $x\in\partial B(p,\rho)$ with $\tilde\angle xpy<\theta$.
Then there exists $R=R(r_2,c,\rho,\theta)>0$ (independent of $r_1$) such that
\[\gexp_p^{-1}(B(p,r_2))\cap K((\partial B(p,r_1))'_p)\subset B(o,R).\]
\end{enumerate}
\end{lem}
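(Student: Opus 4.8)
The plan is to reduce both assertions to a single statement: each radial curve occurring in the relevant cone leaves $B(p,r)$ (resp.\ $B(p,r_2)$) within a time bounded only in terms of $r,c,\rho,\theta$ (resp.\ $r_2,c,\rho,\theta$, with no dependence on $r_1$). The engine will be the differential inequality already used in the proof of Proposition~\ref{prop:gexp}, and the role of the density hypotheses will be exactly to keep a radial curve from drifting outward arbitrarily slowly near $p$. First I would record that along any radial curve $\beta=\beta_\xi$ equation~\eqref{eq:rad} gives $|p\beta(s)|'=(\tanh|p\beta(s)|/\tanh s)\,|\nabla_{\beta(s)}\dist_p|^2\ge0$, so $s\mapsto|p\beta(s)|$ is nondecreasing and $\gexp_p(s\xi)=\beta_\xi(s)$ lies in $B(p,r)$ precisely for $s\in[0,s^*_\xi)$ (the first exit time); thus it suffices to bound $s^*_\xi$ uniformly.

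Where $|\nabla\dist_p|>c$ and $|p\beta(s)|>0$ the same computation yields
\[
\frac{d}{ds}\log\sinh|p\beta(s)|=\frac{|\nabla_{\beta(s)}\dist_p|^2}{\tanh s}>\frac{c^2}{\tanh s}=c^2\,\frac{d}{ds}\log\sinh s ,
\]
and integrating from a fixed $\sigma>0$ up to any $s<s^*_\xi$, together with $|p\beta(s)|<r$, gives
\[
c^2\log\sinh s<\log\sinh r-\log\sinh|p\beta(\sigma)|+c^2\log\sinh\sigma .
\]
So everything reduces to a positive lower bound for $|p\beta(\sigma)|$, for some fixed small $\sigma=\sigma(\rho,\theta)$, depending only on $\rho,\theta$ — and in part~(2) independent of $r_1$; if $s^*_\xi\le\sigma$ there is nothing to prove. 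That lower bound is where the density hypothesis enters, via Proposition~\ref{prop:rad}. In part~(1), $\theta$-density lets me choose $x\in\partial B(p,\rho)$ and a minimal geodesic $px$ with $\angle(x'_p,\xi)<\theta$, whence $\tilde\angle xp\!\smile\!\beta(s)<\theta$ for all $s$ by Proposition~\ref{prop:rad}(1). In part~(2), for $\xi=y'_p$ with $y\in\partial B(p,r_1)$, I choose $x\in\partial B(p,\rho)$ with $\tilde\angle xpy<\theta$ and apply Proposition~\ref{prop:rad}(2) to the minimal geodesics $py$ and $px$, evaluating the second curve at its endpoint $x$, to obtain $\tilde\angle\beta(s)\!\smile\!p\!\smile\!x<\theta$ for all $s\ge r_1$.

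In either case the hyperbolic law of cosines for the triangle with sides $\rho$, $s$, $|x\beta(s)|$, combined with the triangle inequality $|x\beta(s)|\ge\rho-|p\beta(s)|$, gives, as long as $|p\beta(s)|\le\rho$,
\[
|p\beta(s)|>\rho-G(s),\qquad G(s):=\operatorname{arccosh}\bigl(\cosh\rho\cosh s-\sinh\rho\sinh s\cos\theta\bigr).
\]
Since $G(0)=\rho$ and $G'(0)=-\cos\theta$, I can fix $\sigma=\sigma(\rho,\theta)\in(0,\rho)$ with $\rho-G(s)\ge\tfrac12 s\cos\theta$ on $(0,\sigma]$; the remaining cases are immediate ($|p\beta(\sigma)|>\rho$ gives $|p\beta(\sigma)|>\rho>\tfrac12\sigma\cos\theta$, and in part~(2) the case $r_1\ge\sigma$ gives $|p\beta(\sigma)|=\sigma$ along the initial geodesic). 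Hence $|p\beta(\sigma)|\ge\varepsilon_1(\rho,\theta):=\tfrac12\sigma\cos\theta>0$ unconditionally, and substituting into the inequality above bounds $s^*_\xi$ by a constant depending only on $r,c,\rho,\theta$ in part~(1) and only on $r_2,c,\rho,\theta$ in part~(2).

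One point to handle carefully in part~(2) is that the differential inequality must hold along all of $\beta$, including the initial geodesic segment: on the open minimal geodesic from $p$ to $y$ the function $\dist_p$ increases at unit speed, so $|\nabla\dist_p|=1>c$ there; at $y$ and at points with $|p\beta(s)|>r_1$ one has $|\nabla\dist_p|>c$ because $\partial B(p,r_1)$ and all points at distance strictly between $r_1$ and $r_2$ lie in $A(p;r_1,r_2)$, which is free of critical points; and $|p\beta(s)|$ is strictly increasing once $s>r_1$, so $\beta$ cannot re-enter $B(p,r_1)$. I expect the main obstacle to be precisely recognizing that it is the density hypothesis which forces $|p\beta(s)|$ to be at least a definite multiple of $s$ for small $s$ — without it the factor $\tanh|p\beta(s)|/\tanh s$ degenerates when $r_1$ is small, and a radial curve can drift outward arbitrarily slowly — and then organizing the case analysis so that the final constant is genuinely independent of $r_1$. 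A minor technical matter is to justify the differentiation of $s\mapsto|p\beta(s)|$ and the monotonicity of $s\mapsto\log\sinh|p\beta(s)|-c^2\log\sinh s$ through one-sided derivatives, as is done in \cite[3.3]{PP2}.
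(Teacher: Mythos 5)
Your argument is correct and matches the paper's proof in all essentials: the same integrated differential inequality reduces the claim to a positive lower bound on $|p\beta(\sigma)|$ for some $\sigma$ depending only on $\rho$ and $\theta$, which you obtain, exactly as the paper does, from the monotonicity of comparison angles along radial curves (Proposition \ref{prop:rad}) together with the hyperbolic law of cosines. The only cosmetic differences are that you prove (1) directly instead of deducing it from (2) by letting $r_1\to0$, you invoke Proposition \ref{prop:rad}(2) where the paper uses part (1) (the two give the same comparison angle here), and you make the paper's first-order expansion $\rho-|x\beta(\sigma)|=\cos\tilde\angle\cdot\sigma+o_\rho(\sigma)$ explicit via your function $G$.
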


\begin{proof}
(1) follows from (2) by taking $r_2=r$ and $r_1\to0$.
Let us show (2).
Consider the radial curve $\beta(s)=\gexp_p(sy'_p)$ for $y\in\partial B(p,r_1)$.
We must show that if $\beta(s)\in B(p,r_2)$ then $s<R(r_2,c,\rho,\theta)$.
Fix sufficiently small $\sigma>0$ depending only on $\rho$ and $\theta$ (to be determined later).
We first consider the case $r_1<\sigma$.
Since $|\dist_p|>c$ on $A(p;r_1,r_2)$, the same inequality as \eqref{eq:si} holds:
\begin{equation}\label{eq:s}
\log\sinh s\le c^{-2}\left(\log\sinh r_2-\log\sinh|p\beta(\sigma)|\right)+\log\sinh\sigma.
\end{equation}
Therefore, it is enough to show that $|p\beta(\sigma)|$ has a uniform positive lower bound depending only on $\rho$, $\theta$, and $\sigma$. 
By assumption, there exists $x\in\partial B(p,\rho)$ such that $\tilde\angle xpy<\theta$.
Proposition \ref{prop:rad}(1) implies $\tilde\angle xp\!\smile\!\beta(\sigma)<\theta$.
Therefore we have
\begin{align*}
|p\beta(\sigma)|&\ge\rho-|x\beta(\sigma)|\\
&=\cos\tilde\angle xp\!\smile\!\beta(\sigma)\cdot\sigma+o_\rho(\sigma)\\
&\ge\cos\theta\cdot\sigma+o_\rho(\sigma)\\
&\ge\const(\rho,\theta,\sigma)>0.
\end{align*}
Here $o_\rho(\sigma)$ is a function depending only on $\rho$ such that $o_\rho(\sigma)/\sigma\to0$ as $\sigma\to0$.
Since $\theta<\pi/2$, we can choose $\sigma=\sigma(\rho,\theta)$ so that the last inequality holds.

In the case $r_1\ge\sigma$, the same inequality as \eqref{eq:s} with $\sigma$ replaced by $r_1$ holds.
Since $|p\beta(r_1)|=r_1\ge\sigma$, the right-hand side is uniformly bounded above in terms of $r_2$, $c$, $\rho$, and $\theta$.
This completes the proof.
\end{proof}

In view of Lemma \ref{lem:gexp}, we modify Theorem \ref{thm:res} as follows.

\begin{thm}\label{thm:res2}
Suppose $(M_i,p_i)\in\mathcal A_p(n)$ converges to an Alexandrov space $(X,p)$ with dimension $\ge1$.
Then for sufficiently small $r>0$ and $c>0$, there exists $\hat p_i\in M_i$ converging to $p$ such that either (1) or (2) holds:
\begin{enumerate}
\item There is a subsequence $\{j\}\subset\{i\}$ such that $|\nabla\dist_{\hat p_j}|>c$ on $\bar B(\hat p_j,r)\setminus\{\hat p_j\}$
and $(\partial B(\hat p_j,r))'_{\hat p_j}$ is $(\pi/2-c)$-dense in $\Sigma_{\hat p_j}$.
\item There exists a sequence $\delta_i\to0$ such that
\begin{itemize}
\item[(i)] for any $\lambda>1$ and sufficiently large $i$, $|\nabla\dist_{\hat p_i}|>c$ on $A(\hat p_i;\lambda\delta_i,r)$ and for any $y\in\partial B(\hat p_i,\lambda\delta_i)$, there is $x\in\partial B(\hat p_i,r)$ with $\tilde\angle x\hat p_iy<\pi/2-c$;
\item[(ii)] for any limit $(Y,y_0)$ of  a subsequence of $(\frac1{\delta_i}M_i,\hat p_i)$, we have
\[\dim Y\ge\dim X+1.\]
\end{itemize}
\end{enumerate}
In particular, if $\dim X=n$, then (1) holds for all sufficiently large $i$.
\end{thm}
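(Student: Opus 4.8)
The plan is to re-run the proof of Theorem~\ref{thm:res} from \cite{Y}---including its contradiction/compactness structure---to produce the radius $r$ (which by Remark~\ref{rem:res} depends only on $X$ and $n$), the sequence $\hat p_i\to p$, and the dichotomy, and then to promote the two qualitative conclusions (``no critical points'' and, in case (2), the dimension jump) to the quantitative conclusions of Theorem~\ref{thm:res2} purely by compactness over $\mathcal A_p(n)$. It is convenient to shrink $r$ so that $\dist_p$ has no critical points on $\bar B(p,r)\setminus\{p\}$ in the limit space $X$ itself: such $r$ exists because, by scale invariance of $|\nabla\dist_q|$, the convergence $(\lambda M,q)\to(T_q,o)$, and lower semicontinuity of $|\nabla\dist|$ under pointed Gromov--Hausdorff convergence, no point $q$ of an Alexandrov space admits critical points of $\dist_q$ accumulating at $q$.

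For the gradient bound $|\nabla\dist_{\hat p_i}|>c$ in case (1), I would argue by contradiction: if no $c=c(X,n)$ works there are $x_i\in\bar B(\hat p_i,r)\setminus\{\hat p_i\}$ with $|\nabla_{x_i}\dist_{\hat p_i}|\to 0$. Put $t_i=|\hat p_i x_i|$ and pass to a subsequence so that $(t_i^{-1}M_i,\hat p_i)\to(Z,o)$ with $x_i\to v\in\partial B(o,1)$; lower semicontinuity gives $\nabla_v\dist_o=o$. If $t_i\not\to 0$ then $Z$ is a rescaling of $X$ and $v$ yields a critical point of $\dist_p$ on $\bar B(p,r)\setminus\{p\}$ in $X$, contradicting the choice of $r$. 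If $t_i\to 0$ then $Z$ is a nonnegatively curved blow-up, and the critical point $v\ne o$ should then force, after unrescaling, either a critical point of $\dist_{\hat p_i}$ near $\hat p_i$ (impossible in case (1)) or a blow-up of dimension $>\dim X$ at scale $t_i$---that is, we are really in alternative (2) with $\delta_i\sim t_i$, so the claim is not needed in case (1). The gradient bound in case (2) is obtained the same way after rescaling $M_i$ by $\delta_i^{-1}$, the factor $\lambda>1$ being kept away from the critical radius by taking $\delta_i$ to be a fixed multiple of the maximal distance from $\hat p_i$ to a critical point of $\dist_{\hat p_i}$ in $\bar B(\hat p_i,r)\setminus\{\hat p_i\}$.

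The density clause in case (1) and the angle clause in case (2) are the genuinely new content, and I would establish them by contradiction and compactness, using the radial curves of \S\ref{sec:semi}. Suppose, in case (1), that $(\partial B(\hat p_i,r))'_{\hat p_i}$ is not $(\pi/2-\varepsilon_i)$-dense in $\Sigma_{\hat p_i}$ for some $\varepsilon_i\to 0$, witnessed by $\xi_i\in\Sigma_{\hat p_i}$. Since $\Sigma_{\hat p_i}$ may itself be collapsing, I would track $\xi_i$ through the radial curve $\beta_{\xi_i}$ starting at $\hat p_i$ in the direction $\xi_i$; by the gradient bound it exits $\partial B(\hat p_i,r)$ at some $x_i$, and Proposition~\ref{prop:rad} bounds the comparison angle between $\xi_i$ and $(x_i)'_{\hat p_i}$ along $\beta_{\xi_i}$. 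Passing to limits (rescaling near $\hat p_i$ as above), a persistent failure should force a critical point of $\dist_{\hat p_i}$ at small scale, or a blow-up of dimension $\le\dim X$---contradicting case (1). The angle clause in case (2) follows analogously from Proposition~\ref{prop:rad}(1) applied to radial curves started at points of $\partial B(\hat p_i,\lambda\delta_i)$, the margin $\lambda>1$ being exactly what keeps the relevant comparison angles below $\pi/2-c$. The final sentence of the statement is then immediate from the corresponding sentence of Theorem~\ref{thm:res}.

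The hard part is the third paragraph: extracting a positive constant $c$ depending only on $X$ and $n$ for the density/angle clauses while the spaces of directions $\Sigma_{\hat p_i}$ may be collapsing, so that ``density in $\Sigma_{\hat p_i}$'' has to be detected through the ambient radial flow rather than inside $\Sigma_{\hat p_i}$; correctly coordinating the two layers of rescaling (at scale $r$ and at scale $\delta_i$) so that the ``$\lambda>1$'' margin is genuinely available is the other delicate point. By contrast, the gradient bound in the second paragraph is a routine lower-semicontinuity-plus-compactness argument once the dichotomy is in hand.
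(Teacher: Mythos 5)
Your overall plan---take the dichotomy of Theorem \ref{thm:res} as a black box, keep its $\hat p_i$ and $\delta_i$, and then upgrade ``no critical points'' to the quantitative clauses by compactness and contradiction---has a genuine gap at the intermediate scales. You set $\delta_i$ to be (a multiple of) the distance from $\hat p_i$ to the farthest critical point and then argue that a failure of $|\nabla\dist_{\hat p_i}|>c$ on $A(\hat p_i;\lambda\delta_i,r)$ leads to a contradiction after blowing up. But if the gradient degenerates at points $y_i$ with $|\hat p_iy_i|=t_i$ where $\delta_i\ll t_i\ll r$, the relevant blow-up $(t_i^{-1}M_i,\hat p_i)\to(Z,o)$ is a third space about which Theorem \ref{thm:res} asserts nothing; lower semicontinuity of $|\nabla|$ does produce a critical point of $\dist_o$ at distance $1$ in $Z$, but there is nothing to contradict, since the absence of critical points on the annuli of $M_i$ does \emph{not} pass to the limit (semicontinuity goes the wrong way for that). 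So the compactness argument controls only the scales $\sim r$ and $\sim\delta_i$, not the continuum in between, and the constant $c$ you extract need not exist for your choice of $\delta_i$. The density/angle clause suffers from the same problem, and there your radial-curve argument is only a sketch (``a persistent failure should force\dots'') with no identified mechanism.

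The paper resolves this by making the dichotomy quantitative from the outset rather than upgrading it afterwards. It constructs $\hat p_i$ explicitly as a maximum point of $f^i=\min_\alpha N_\alpha^{-1}\sum_\beta d(x^i_{\alpha\beta},\cdot)$, where $\{x_{\alpha\beta}\}$ is a two-scale $(\theta r,\varepsilon r)$-net on $\partial B(p,2r)$ lifted to $M_i$, sets $c=\sin(\varepsilon/2N)$, and defines $\hat q_i$ as the \emph{farthest} point at which either $|\nabla\dist_{\hat p_i}|\le c$ or the angle condition fails, with $\delta_i=|\hat p_i\hat q_i|$; then (2)(i) holds by construction at every intermediate scale, and all the work goes into (2)(ii). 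That step is a packing argument: the first variation inequality at the maximum point forces $\sum_\beta\cos\angle(v,v_{\alpha\beta})\le0$ in the blow-up $(Y,y_0)$, while conditions (a)/(b) force each $\angle(v,v_{\alpha\beta})\ge\pi/2-\varepsilon/2N$, so the $N_\alpha$ pairwise $\varepsilon/4$-separated directions $v_{\alpha\beta}$ all lie within $\varepsilon$ of the equator of $v$ in $\Sigma_{y_0}$, giving $N_\alpha\le\const(n)\cdot\varepsilon^{-(\dim Y-2)}$; comparing with the Bishop--Gromov lower bound $N_\alpha\ge\const\cdot(\theta/\varepsilon)^{\dim X-1}$ in $X$ yields $\dim Y\ge\dim X+1$. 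This maximum-point/first-variation mechanism is what your proposal lacks, and without it the dimension jump for a $\delta_i$ defined by the quantitative conditions remains unproven.
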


Note that (1) (resp.\ (2)(i)) states that the assumption of Lemma \ref{lem:gexp}(1) (resp.\ (2)) is satisfied with constants independent of $j$ (resp.\ $i$).

\begin{proof}
The proof is along the same lines as \cite[3.2]{Y}.
Fix $0<\varepsilon\ll\theta\le\pi/100$ (to be determined later).
Choose $0<r<1/100$ small enough that
\begin{itemize}
\item $\angle xpy-\tilde\angle xpy<\varepsilon$ for every $x,y\in\partial B(p,2r)$;
\item $(\partial B(p,2r))'_p$ is $\varepsilon$-dense in $\Sigma_p$.
\end{itemize}
Note that the latter implies that $|\nabla\dist_p|>1/10$ on $\bar B(p,r)\setminus\{p\}$.
Let $\{x_\alpha\}_\alpha$ be a maximal $\theta r$-discrete set in $\partial B(p,2r)$.
Furthermore, for each $\alpha$, let $\{x_{\alpha\beta}\}_{\beta=1}^{N_\alpha}$ be a maximal $\varepsilon r$-discrete set in $B(x_\alpha,\theta r)\cap\partial B(p,2r)$.
Then the Bishop--Gromov inequality implies that
\begin{equation}\label{eq:dimx}
N_\alpha\ge\const(X)\cdot\left(\frac\theta\varepsilon\right)^{\dim X-1}.
\end{equation}

Define functions $f_\alpha$ and $f$ on $X$ by
\[f_\alpha(x):=\frac1{N_\alpha}\sum_{\beta=1}^{N_\alpha}d(x_{\alpha\beta},x),\quad f(x):=\min_\alpha f_\alpha(x).\]
It is easy to see that $f$ has a strict maximum at $p$ on $\bar B(p,r)$, provided $\theta$ is small enough (see \cite[3.3]{Y}).

Fix a $\mu_i$-Hausdorff approximation $\varphi_i:B(p,1/\mu_i)\to B(p_i,1/\mu_i)$ with $\varphi_i(p)=p_i$, where $\mu_i\to0$ as $i\to\infty$.
Set $x_{\alpha\beta}^i:=\varphi_i(x_{\alpha\beta})$ and define functions $f_\alpha^i$ and $f^i$ on $M_i$ by
\[f_\alpha^i(x):=\frac1{N_\alpha}\sum_{\beta=1}^{N_\alpha}d(x_{\alpha\beta}^i,x),\quad f^i(x):=\min_\alpha f_\alpha^i(x).\]
Note that $f_\alpha^i$ and $f^i$ converge to $f_\alpha$ and $f$, respectively.
Let $\hat p_i$ be a maximum point of $f^i$ on $\bar B(p_i,r)$.
Then $\hat p_i$ converges to $p$, the unique maximum point of $f$.
Set
\[c:=\sin(\varepsilon/2N),\quad\text{where}\quad N=\max_\alpha N_\alpha.\]
Suppose that (1) does not hold for these $r$ and $c$.
Then for any sufficiently large $i$, there exists $y\in\bar B(\hat p_i,r)\setminus\{\hat p_i\}$ such that
\begin{itemize}
\item[(a)] $|\nabla_y\dist_{\hat p_i}|\le c$ or;
\item[(b)] $\tilde\angle x\hat p_iy\ge\pi/2-c$ for all $x\in\partial B(\hat p_i,r)$.
\end{itemize}
Let $\hat q_i\in\bar B(\hat p_i,r)\setminus\{\hat p_i\}$ be a farthest point from $\hat p_i$ satisfying either (a) or (b), and let $\delta_i$ be the distance between $\hat p_i$ and $\hat q_i$.
Then (2)(i) is obvious.
Moreover, $\delta_i\to 0$ since $|\nabla\dist_p|>1/10$ on $\bar B(p,r)\setminus\{p\}$ and $(\partial B(p,2r))'_p$ is $\varepsilon$-dense in $\Sigma_p$.

Let us show (2)(ii).
Suppose that $(\frac1{\delta_i}M_i,\hat p_i)$ converges to an Alexandrov space $(Y,y_0)$ of nonnegative curvature.
Passing to a subsequence, we may assume that $\hat q_i$ converges to $z_0\in Y$.
We may further assume that minimal geodesics $\hat p_i\hat q_i$ and $\hat p_ix_{\alpha\beta}^i$ converge to a minimal geodesic $y_0z_0$ and a ray $\gamma_{\alpha\beta}$ from $y_0$, respectively.
Let $v_i,v_{\alpha\beta}^i\in\Sigma_{\hat p_i}$ denote the directions of $\hat p_i\hat q_i$ and $\hat p_ix_{\alpha\beta}^i$, and let $v,v_{\alpha\beta}\in\Sigma_{y_0}$ be the directions of $y_0z_0$ and $\gamma_{\alpha\beta}$, respectively.
Note that
\[\angle(v_{\alpha\beta},v_{\alpha\beta'})\ge\tilde\angle x_{\alpha\beta}px_{\alpha\beta'}\ge\varepsilon/4\]
for every $1\le\beta\neq\beta'\le N_\alpha$.

First we show that
\begin{equation}\label{eq:v1}
\angle(v,v_{\alpha\beta})\ge\frac\pi2-\frac\varepsilon{2N}
\end{equation}
for every $\alpha$ and $\beta$.
If (a) holds for infinitely many $\hat q_i$, then by the lower semicontinuity of $|\nabla|$, we have $|\nabla_{z_0}\dist_{y_0}|\le\sin(\varepsilon/2N)$.
This implies that $\tilde\angle y_0z_0x_{\alpha\beta}(\infty)\le\pi/2+\varepsilon/2N$, where $x_{\alpha\beta}(\infty)$ denotes the element of the ideal boundary of $Y$ defined by the ray $\gamma_{\alpha\beta}$.
Thus we obtain
\[\angle(v,v_{\alpha\beta})\ge\tilde\angle z_0y_0x_{\alpha\beta}(\infty)\ge\frac\pi2-\frac\varepsilon{2N}.\]
On the other hand, if (b) holds for infinitely many $\hat q_i$, then by the monotonicity of angles, we have
\[\angle(v,v_{\alpha\beta})\ge\limsup_{i\to\infty}\tilde\angle\hat q_i\hat p_ix_{\alpha\beta}^i(r)\ge\frac\pi2-c\ge\frac\pi2-\frac\varepsilon{2N},\]
where $x_{\alpha\beta}^i(r)$ denotes the point on the minimal geodesic $\hat p_ix_{\alpha\beta}^i$ at distance $r$ from $\hat p_i$.
Therefore in either case we obtain \eqref{eq:v1}.

Next we fix $\alpha$ such that $(f^i)'_{\hat p_i}(v_i)=(f_\alpha^i)'_{\hat p_i}(v_i)$ for infinitely many $i$.
Since $f^i$ has a local maximum at $\hat p_i$, the first variation formula implies that
\[0\ge(f^i)'_{\hat p_i}(v_i)=\frac1{N_\alpha}\sum_{\beta=1}^{N_\alpha}-\cos\angle(v_i,v_{\alpha\beta}^i)\]
(choose $v_{\alpha\beta}^i$ so that the first variation formula holds for $v_i$).
Passing to the limit and using the lower semicontinuity of angles, we have
\begin{equation}\label{eq:v2}
0\ge\frac1{N_\alpha}\sum_{\beta=1}^{N_\alpha}-\cos\angle(v,v_{\alpha\beta}).
\end{equation}

Now, combining \eqref{eq:v1} and \eqref{eq:v2}, we obtain
\[\left|\angle(v,v_{\alpha\beta})-\frac\pi2\right|\le\varepsilon.\]
Hence $\{v_{\alpha\beta}\}_{\beta=1}^{N_\alpha}$ is an $\varepsilon/4$-discrete set of $A(v;\pi/2-\varepsilon,\pi/2+\varepsilon)$.
Since there exists a noncontracting map from $\Sigma_{y_0}$ to the unit sphere $S^{\dim Y-1}$ preserving the distance from $v$, we have
\begin{equation}\label{eq:dimy}
N_{\alpha}\le\const(n)\cdot\varepsilon^{-(\dim Y-2)}.
\end{equation}

Finally, combining \eqref{eq:dimx} and \eqref{eq:dimy} and taking $\varepsilon$ sufficiently small, we conclude that $\dim Y\ge\dim X+1$.
\end{proof}

We are now ready to prove the uniform boundedness of the volumes of extremal subsets.
Unlike the previous two sections, we prove it by contradiction using Theorem \ref{thm:res2}, without taking an essential covering.
However, the following proof has the same structure as the proof of the existence of an essential covering, Theorem \ref{thm:ess}, which was by contradiction using Theorem \ref{thm:res}.

Let $\vol_m$ denote the $m$-dimensional Hausdorff measure.
As noted in \S\ref{sec:main}, the Hausdorff measure of an extremal subset does not depend on whether the metric is intrinsic or extrinsic (\cite[3.17]{F}).

\begin{thm}\label{thm:vol}
For given $n$ and $D$, there exists a constant $C(n,D)$ satisfying the following:
Let $M\in\mathcal A(n)$, $p\in M$, and $E\subset M$ an $m$-dimensional extremal subset.
Then we have
\[\vol_m(B(p,D)\cap E)\le C(n,D).\]
\end{thm}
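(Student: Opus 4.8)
The plan is to follow the pattern of the proofs of Theorems \ref{thm:num} and \ref{thm:betti}: an induction on $n$ together with an essential covering of $B(p,D)$ of depth $\le n$, the new geometric input being the gradient exponential map. The crucial fact is that $\gexp_q$ is a $1$-Lipschitz map $(T_q,\mathfrak h)\to M$, so whenever a subcone of $T_q$ is carried onto a portion of $E$ by $\gexp_q$, the $m$-dimensional Hausdorff measure of that portion is dominated by the $\mathfrak h$-volume of the subcone; and since $(T_q,\mathfrak h)$ is the hyperbolic cone over $\Sigma_q$, the latter reduces to an $(m-1)$-dimensional measure of a subset of $\Sigma_q$. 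For the subsets that are spaces of directions of $E$ --- extremal subsets of $\Sigma_q$, an $(n-1)$-dimensional space of curvature $\ge1$, hence of diameter $\le\pi$ --- this is exactly the inductive hypothesis.

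First I would rerun the proof of Theorem \ref{thm:ess} with Theorem \ref{thm:res2} in place of Theorem \ref{thm:res}. The compactness argument there, applied to a finite subcover, then yields a constant $R_0=R_0(n,D)$ and an isotopy covering system $\mathcal B=\{B_{\alpha_1\cdots\alpha_k}\}$ of $B(p,D)$ of depth $\le n$ with all degrees $\le C(n,D)$, such that Lemma \ref{lem:gexp} applies at every stage with this fixed $R_0$: for $\alpha\in\hat A$ one has $\gexp_{p_\alpha}^{-1}(B_\alpha)\subset B(o,R_0)$, and across the annulus between each isotopic subball $\hat B_\beta$ and its ball $B_\beta$ the map $G_{p_\beta}^{(\hat r_\beta,R_0)}$ of Proposition \ref{prop:gexp}(2) is available. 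Next, by reverse induction on the layer $k$, just as in Theorems \ref{thm:num} and \ref{thm:betti}, I would prove $\vol_m(B_{\alpha_1\cdots\alpha_k}\cap E)\le C(n,D)$ for $\alpha=\alpha_1\cdots\alpha_l\in\hat A$ and $1\le k\le l$. For the base case $k=l$: since $\dist_{p_\alpha}$ has no critical points on $\bar B(p_\alpha,r_\alpha)\setminus\{p_\alpha\}$, Proposition \ref{prop:gexp}(1) gives $\gexp_{p_\alpha}(B(o,R_0)\cap T_{p_\alpha}E)\supset B_\alpha\cap E$, hence by the $1$-Lipschitz property and the cone structure
\[
\vol_m(B_\alpha\cap E)\ \le\ \int_0^{R_0}(\sinh s)^{m-1}\,ds\cdot\vol_{m-1}(\Sigma_{p_\alpha}E)\ \le\ C(n,D),
\]
the last step being the inductive hypothesis applied to the $(m-1)$-dimensional extremal subset $\Sigma_{p_\alpha}E$ of $\Sigma_{p_\alpha}\in\mathcal A(n-1,\pi)$. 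For $k<l$, the balls $\{B_{\alpha_1\cdots\alpha_{k+1}}\}_{\alpha_{k+1}}$ cover an isotopic subball $\hat B$ of $B=B_{\alpha_1\cdots\alpha_k}$, so $\vol_m(\hat B\cap E)\le\sum_{\alpha_{k+1}}\vol_m(B_{\alpha_1\cdots\alpha_{k+1}}\cap E)$ is already bounded; writing $B\cap E=(\hat B\cap E)\cup(A(p;\hat r,r)\cap E)$ with $p=p_{\alpha_1\cdots\alpha_k}$ and applying Proposition \ref{prop:gexp}(2) and the $1$-Lipschitz property, the remaining annular contribution is, by the same cone computation, at most
\[
\int_0^{R_0}(\sinh s)^{m-1}\,ds\cdot\vol_{m-1}\bigl((\partial B(p,\hat r)\cap E)'_p\bigr).
\]
Summing over $\alpha_1=1,\dots,N_1$ then gives the theorem, the induction bottoming out at $n=0$ where it is trivial.

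The main obstacle is this last quantity. Unlike $\Sigma_pE$, the set $(\partial B(p,\hat r)\cap E)'_p$ of directions of the metric-sphere slice $\partial B(p,\hat r)\cap E$ is in general not an extremal subset of $\Sigma_p$, so it cannot be bounded directly by the induction on $n$; equivalently, what must be controlled is the $m$-volume distortion of the gradient exponential map (or of the gradient flow of $\cosh\circ\dist_p-1$) across a regular annulus of possibly tiny inner radius $\hat r$. I expect this to be handled by a packing estimate: by the comparison inequality for curvature $\ge-1$, a maximal $\varepsilon$-separated subset of $(\partial B(p,\hat r)\cap E)'_p$ pulls back to an essentially $\varepsilon\sinh\hat r$-separated subset of $\partial B(p,\hat r)\cap E$, whose cardinality is bounded by the inductive data at the scale $\hat r$ of the isotopic subball --- where $\vol_m$ of $E$ is of order $\hat r^{\,m}$, while the gradient exponential map expands $m$-volume by a factor of order $\hat r^{-m}$, so that the two scalings cancel. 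The uniformity of every constant entering this balance is precisely what Lemma \ref{lem:gexp} and the strengthened Theorem \ref{thm:res2} were set up to supply; Corollary \ref{cor:precpt} would then follow by running the same argument for $\varepsilon$-discrete subsets in the induced intrinsic metric and invoking Theorem \ref{thm:precpt}.
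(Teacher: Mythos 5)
Your overall architecture --- induction on $n$, a covering of $B(p,D)$ built from Theorem \ref{thm:res2}, the $1$-Lipschitz property of $\gexp_q:(T_q,\mathfrak h)\to M$, and the cone computation $\int_0^{R}(\sinh s)^{m-1}ds\cdot\vol_{m-1}(\Sigma_{q}E)$ with the induction on $n$ applied to the extremal subset $\Sigma_qE\subset\Sigma_q$ --- is exactly the paper's, and your base case is correct. The gap is in the annular step, and you have correctly located it yourself: the quantity $\vol_{m-1}\bigl((\partial B(p,\hat r)\cap E)'_p\bigr)$ is not controllable, and the packing argument you sketch to rescue it does not work. An $\varepsilon$-separated set of directions does \emph{not} pull back to an $\varepsilon\sinh\hat r$-separated set of points: the comparison inequality gives $\angle(x'_p,y'_p)\ge\tilde\angle xpy$, so a lower bound on the angle between directions gives no lower bound on $|xy|$ (two directions of large angle may point to the same or to arbitrarily close points of $\partial B(p,\hat r)\cap E$). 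Moreover, even a bound on packing numbers of the direction set would not bound its $(m-1)$-dimensional Hausdorff measure without a uniform scaling in $\varepsilon$, which is essentially the statement being proved (cf.\ Theorem \ref{thm:dis}) --- the argument would be circular.

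The paper avoids the direction set of the sphere slice entirely. The device is the \emph{second} statement of Proposition \ref{prop:gexp}(2), which you mention as ``available'' but then do not use for the volume estimate: the map $G_p^{(r_1,R)}(x)=\gexp_p(\frac{R}{r_1}|px|x'_p)$ is defined on the whole inner ball $B(p,r_1)$, carries $B(p,r_1)\cap E$ \emph{onto} a superset of $B(p,r_2)\cap E$, and by Proposition \ref{prop:rad}(2) is $\frac{\sinh R}{\sinh r_1}$-Lipschitz. Hence $\vol_m(B(p,r_2)\cap E)\le(\frac{\sinh R}{\sinh r_1})^m\vol_m(B(p,r_1)\cap E)$, and it only remains to know that the inner ball satisfies $\vol_m(B(p,r_1)\cap E)\le Cr_1^m$ --- a \emph{scale-invariant} bound, which your layer-by-layer reverse induction on an essential covering does not directly provide (it gives a bound by an absolute constant, not by $Cr_1^m$). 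The paper obtains it by arguing by contradiction along a convergent sequence and running the reverse induction on the dimension of the rescaled limit $(\frac{1}{\delta_i}M_i,\hat p_i)\to(Y,y_0)$ with $\dim Y\ge\dim X+1$, applying the inductive bound to $\frac{1}{\delta_i}B(\hat p_i,2\delta_i)$ so that the conclusion reads $\vol_m(B(\hat p_i,2\delta_i)\cap E_i)\le C\delta_i^m$; then $(\frac{\sinh R}{\sinh 2\delta_i})^m\cdot C\delta_i^m\le C$. Your intuition that ``the two scalings cancel'' is exactly right, but the cancellation is realized through the Lipschitz constant of $G_p^{(r_1,R)}$ acting on the inner ball, not through a packing count of directions on the inner sphere.
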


\begin{proof}
We use induction on $n$.
Suppose that the conclusion does not hold.
Choose a sequence of Alexandrov spaces $(M_i,p_i)\in\mathcal A_p(n)$ and $m$-dimensional extremal subsets $E_i\subset M_i$ such that $\vol_m(B(p_i,D)\cap E_i)\to\infty$ as $i\to\infty$.
We may assume that $(M_i,p_i)$ converges to an Alexandrov space $(X,p)$.
Set $k=\dim X$.
We prove by reverse induction on $k$ that there exists a constant $C$ such that
\[\vol_m(B(p_i,D)\cap E_i)\le C\]
for some subsequence.
This is a contradiction.
In what follows, $C$ denotes various positive constants independent of (sub)sequences.

First suppose $k=n$.
Take a finite covering $\{B(x_\alpha,r_\alpha/2)\}_{\alpha=1}^N$ of $\bar B(p,D)$, where $r_\alpha$ is the one of Theorem \ref{thm:res2}.
Then there exists $\hat x_\alpha^i\to x_\alpha$ for each $\alpha$ such that Theorem \ref{thm:res2}(1) holds for sufficiently large $i$.
Therefore, by Proposition \ref{prop:gexp}(1) and Lemma \ref{lem:gexp}(1), there exists $R_\alpha$ independent of $i$ such that
\[\gexp_{\hat x_\alpha^i}(B(o,R_\alpha)\cap T_{\hat x_\alpha^i}E_i)\supset B(\hat x_\alpha^i,r_\alpha)\cap E_i.\]
Since $\gexp_{\hat x_\alpha^i}$ is a $1$-Lipschitz map from the elliptic cone $(T_{\hat x_\alpha^i},\mathfrak h)$ over $\Sigma_{\hat x_\alpha^i}$ (see \S\ref{sec:semi}), we have
\[\vol_m(B(\hat x_\alpha^i,r_\alpha)\cap E_i)\le\int_0^{R_\alpha}\sinh^{m-1}r\cdot\vol_{m-1}(\Sigma_{\hat x_\alpha^i}E_i)\,dr\le C,\]
where the second inequality follows from the hypothesis of the induction on $n$.
Since $\{B(\hat x_\alpha^i,r_\alpha)\}_{\alpha=1}^N$ is a covering of $B(p_i,D)$ for sufficiently large $i$, we obtain
\[\vol_m(B(p_i,D)\cap E_i)\le\sum_{\alpha=1}^N\vol_m(B(\hat x_\alpha^i,r_\alpha)\cap E_i)\le C.\]

Next suppose $1\le k\le n-1$.
Cover $\bar B(p,D)$ by $\{B(x_\alpha,r_\alpha/2)\}_{\alpha=1}^N$ as above.
Then there exists $\hat x_\alpha^i\to x_\alpha$ for each $\alpha$ such that either (1) or (2) of Theorem \ref{thm:res2} holds.
If (1) holds, then we have $\vol_m(B(\hat x_\alpha^i,r_\alpha)\cap E_i)\le C$ for some subsequence as above.
Suppose that (2) holds for some $\alpha$.
We fix this $\alpha$ and omit it below.
Then there exists $\delta_i\to 0$ such that both (i) and (ii) holds.
Passing to a subsequence, we may assume that $(\frac1{\delta_i}M_i,\hat x^i)\xrightarrow{\mathrm{GH}}(Y,y_0)$.
Then we have $\dim Y\ge\dim X+1$.
Applying the hypothesis of the reverse induction to $\frac1{\delta_i}B(\hat x^i,2\delta_i)$ and $\frac1{\delta_i}E_i$, we have
\[\vol_m(B(\hat x^i,2\delta_i)\cap E_i)\le C\delta_i^m\]
for some subsequence.
Furthermore, by Proposition \ref{prop:gexp}(2) and Lemma \ref{lem:gexp}(2), there exists $R$ independent of $i$ such that
\[G_{\hat x^i}^{(2\delta_i,R)}(B(\hat x^i,2\delta_i)\cap E_i)\supset B(\hat x^i,r)\cap E_i.\]
Proposition \ref{prop:rad}(2) states that $G_{\hat x^i}^{(2\delta_i,R)}$ is $\frac{\sinh R}{\sinh2\delta_i}$-Lipschitz.
Together with the above inequality, this implies
\[\vol_m(B(\hat x^i,r)\cap E_i)\le\left(\frac{\sinh R}{\sinh2\delta_i}\right)^m\cdot C\delta_i^m\le C.\]
Since $\{B(\hat x_\alpha^i,r_\alpha)\}_{\alpha=1}^N$ is a covering of $B(p_i,D)$ for sufficiently large $i$, we obtain $\vol_m(B(p_i,D)\cap E_i)\le C$.

Finally, the case $k=0$ follows from the case $k\ge1$ by rescaling $M_i$ so that the new diameter is $1$.
This completes the proof.
\end{proof}

For a metric space $(X,d)$ and $\varepsilon>0$, we denote by $N_\varepsilon(X,d)$ the maximal number of $\varepsilon$-discrete points in $X$.
Here we allow the distance between two points to be infinite (since we consider not necessarily connected extremal subsets below).

\begin{thm}\label{thm:dis}
For given $n$ and $D$, there exists a constant $C(n,D)$ satisfying the following:
Let $M\in\mathcal A(n)$, $p\in M$, and $E\subset M$ an $m$-dimensional extremal subset.
Then for any $\varepsilon>0$, we have 
\[N_\varepsilon(B(p,D)\cap E,d_E)\le\frac{C(n,D)}{\varepsilon^m},\]
where $d_E$ denotes the induced intrinsic metric of $E$.
\end{thm}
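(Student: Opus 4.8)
The plan is to imitate the proof of Theorem~\ref{thm:vol} almost line for line, replacing the functional $E\mapsto\vol_m(\,\cdot\,)$ by $E\mapsto N_\varepsilon(\,\cdot\,,d_E)$ (carrying the parameter $\varepsilon$ along) and replacing the two ``cone estimates'' used there by their packing‑number counterparts. Concretely I would induct on $n$; for fixed $n$ I would run the same contradiction argument as in Theorem~\ref{thm:vol}: assuming no $C(n,D)$ works, extract $(M_i,p_i)\in\mathcal A_p(n)$ and $m$‑dimensional extremal subsets $E_i\subset M_i$ together with $\varepsilon_i\in(0,D]$ along which $\varepsilon_i^m N_{\varepsilon_i}(B(p_i,D)\cap E_i,d_{E_i})\to\infty$, pass to a limit $(X,p)$, and prove by reverse induction on $k=\dim X$ that this quantity stays bounded on a subsequence. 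The covering of $\bar B(p,D)$ by the balls $B(x_\alpha,r_\alpha/2)$ supplied by Theorem~\ref{thm:res2}, the choice of $\hat x_\alpha^i\to x_\alpha$, the split into cases (1) and (2) of Theorem~\ref{thm:res2}, the reduction of $k=0$ to $k\ge1$ by rescaling with the reciprocal of the diameter, and the base case $n=1$ (checked by hand) are all taken over unchanged. (For $\varepsilon\ge D$ the count $N_\varepsilon(B(p,D)\cap E,d_E)$ is bounded by the number of connected components of $E$, itself bounded by Theorem~\ref{thm:main}(1), which is all that the application to Corollary~\ref{cor:precpt} needs; so I only treat $0<\varepsilon\le D$.)

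Two elementary facts let $N_\varepsilon(\,\cdot\,,d_E)$ play the role of a measure here. First, if $X=\bigcup_{\alpha=1}^N X_\alpha$ then $N_\varepsilon(X,d_X)\le\sum_\alpha N_\varepsilon(X_\alpha,d_{X_\alpha})$: a set that is $\varepsilon$‑discrete for $d_X$ stays $\varepsilon$‑discrete for the larger metrics $d_{X_\alpha}$ once its points are grouped by the piece containing them. Second, a surjective $c$‑Lipschitz map $f\colon(Y,d)\to(Z,d')$ gives $N_\varepsilon(Z,d')\le N_{\varepsilon/c}(Y,d)$. I would apply these exactly where Theorem~\ref{thm:vol} uses additivity of volume and the Lipschitz properties of $\gexp_p$ and $G_p^{(r_1,R)}$ --- after upgrading those maps to \emph{intrinsic} metrics: because $\gexp_p$ sends $T_pE$ into $E$ and sends a path in $T_pE$ to a path in $E$ of no greater $d_M$‑length, the map $\gexp_p\colon(T_pE,\mathfrak h_{T_pE})\to(E,d_E)$ is $1$‑Lipschitz, where $\mathfrak h_{T_pE}$ is the intrinsic metric of the subcone $T_pE$; likewise $G_p^{(r_1,R)}\colon(B(p,r_1)\cap E,d_{B(p,r_1)\cap E})\to(E,d_E)$ is $(\sinh R/\sinh r_1)$‑Lipschitz by Proposition~\ref{prop:rad}(2), using that $G_p^{(r_1,R)}$ also maps $E$ into $E$ and expands paths by at most that factor.

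The one genuinely new ingredient is a packing estimate for the truncated cone that replaces the integral $\int_0^R\sinh^{m-1}r\cdot\vol_{m-1}(\Sigma_pE)\,dr$. Since $(T_pE,\mathfrak h_{T_pE})$ is the hyperbolic cone over $(\Sigma_pE,d_{\Sigma_pE})$ (the cone construction commutes with passing to intrinsic metrics), I would slice $B(o,R)\cap T_pE$ into $O(R/\varepsilon)$ shells $\{t\le|\,\cdot\,|\le t+\varepsilon\}$; on the shell at radius $\sim t$ two points that are $\varepsilon$‑discrete for $\mathfrak h_{T_pE}$ have directions in $\Sigma_pE$ that are $\gtrsim\varepsilon/\sinh t$‑discrete for $d_{\Sigma_pE}$, while the shell at the apex contributes only a dimensional constant. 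Feeding in the inductive hypothesis applied to $\Sigma_pE$ --- an extremal subset of the $(n-1)$‑dimensional space $\Sigma_p$ (curvature $\ge1\ge-1$, diameter $\le\pi$), of dimension $\le m-1$ --- gives $N_\varepsilon(B(o,R)\cap T_pE,\mathfrak h_{T_pE})\le C(n,R)\varepsilon^{-m}$. Together with Proposition~\ref{prop:gexp}(1) and Lemma~\ref{lem:gexp}(1), which bound the relevant $R$ independently of $i$, this disposes of the non‑collapsing case; in the collapsing case I would apply Proposition~\ref{prop:gexp}(2), Lemma~\ref{lem:gexp}(2), Proposition~\ref{prop:rad}(2) and the reverse‑induction hypothesis to the rescaled balls $\tfrac1{\delta_i}B(\hat x^i,2\delta_i)$, exactly as in Theorem~\ref{thm:vol}, the factor $(\sinh R/\sinh2\delta_i)^m$ now coming from the second Lipschitz map above. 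Summing over the finitely many $\alpha$ contradicts $\varepsilon_i^m N_{\varepsilon_i}\to\infty$.

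The main obstacle I anticipate is precisely the bookkeeping with intrinsic metrics. An extremal subset and its tangent cone are \emph{not} Alexandrov spaces, so one cannot run Bishop--Gromov on $E$ or on $T_pE$ directly, and the restricted metric goes the ``wrong'' way ($d_E\ge d_M$), so the volume bound of Theorem~\ref{thm:vol} does not transfer to $N_\varepsilon(\,\cdot\,,d_E)$. Everything must therefore be routed through the maps $\gexp_p$ and $G_p^{(r_1,R)}$, which are Lipschitz \emph{from} a cone‑like model \emph{into} $(E,d_E)$, and the delicate point is to verify that the model metrics $\mathfrak h_{T_pE}$, $d_{B(p,r_1)\cap E}$ behave as expected --- in particular that $\mathfrak h_{T_pE}$ really is the hyperbolic cone over $(\Sigma_pE,d_{\Sigma_pE})$ so that the induction on $n$ closes. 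The cone packing estimate itself is elementary but is where the hyperbolic‑cone geometry (the $\sinh t$ factor in the girth of shells) has to be handled with some care.
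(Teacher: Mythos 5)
Your proposal follows the paper's own route exactly: the paper disposes of this theorem in two lines by saying one repeats the proof of Theorem \ref{thm:vol} with $\varepsilon^m N_\varepsilon(\,\cdot\,,d_E)$ in place of $\vol_m(\,\cdot\,)$, and the details you supply (subadditivity of packing numbers under coverings, pushforward under the Lipschitz maps $\gexp_p$ and $G_p^{(r_1,R)}$ upgraded to intrinsic metrics, and the hyperbolic-cone packing estimate replacing the cone volume integral) are precisely what is needed to make that substitution work. The only quibble is your parenthetical for $\varepsilon\ge D$: since $d_E\ge d_M$, the intrinsic diameter of $B(p,D)\cap E$ within one component can exceed $D$, so the count is not bounded by the number of components there --- but monotonicity of $N_\varepsilon$ in $\varepsilon$ covers that range for the application anyway.
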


The proof is similar to that of Theorem \ref{thm:vol}.
We can repeat the same argument by considering $\varepsilon^mN_\varepsilon(\,\cdot\,,d_E)$ instead of $\vol_m(\,\cdot\,)$.
Corollary \ref{cor:precpt} now follows from Theorem \ref{thm:dis} and Gromov's precompactness theorem \cite[5.2]{G2}.


\begin{thebibliography}{AKP}
\bibitem[A]{A}
S. Alesker, Some conjectures on intrinsic volumes of Riemannian manifolds and Alexandrov spaces, Arnold Math. J. 4 (2018), 1--17.

\bibitem[AKP]{AKP}
S. Alexander, V. Kapovitch, and A. Petrunin, Alexandrov geometry: foundations, Grad. Stud. Math., 236, Amer. Math. Soc., Providence, RI, 2024.

\bibitem[BBI]{BBI}
D. Burago, Y. Burago, and S. Ivanov, A course in metric geometry, Grad. Stud. Math., 33, Amer. Math. Soc., Providence, RI, 2001.

\bibitem[BGP]{BGP}
Yu. Burago, M. Gromov, and G. Perel'man, A.D. Alexandrov spaces with curvature bounded below, Uspekhi Mat. Nauk 47 (1992), no. 2(284), 3--51, 222; translation in Russian Math. Surveys 47 (1992), no. 2, 1--58.

\bibitem[C]{C}
J. Cheeger, Critical points of distance functions and applications to geometry, Geometric topology: recent developments, 1--38, Lecture Notes in Math., 1504, Springer, Cham, 1991.

\bibitem[DK]{DK}
Q. Deng and V. Kapovitch, Sharp bounds and rigidity for volumes of boundaries of Alexandrov spaces, preprint, arXiv:2308.14668v1, 2023.

\bibitem[F]{F}
T. Fujioka, Regular points of extremal subsets in Alexandrov spaces, J. Math. Soc. Japan 74 (2022), no. 4, 1245--1268.

\bibitem[G1]{G1}
M. Gromov, Curvature, diameter and Betti numbers, Comment. Math. Helv. 56 (1981), 179--195.

\bibitem[G2]{G2}
M. Gromov, Structures m\'etriques pour les vari\'et\'es riemanniennes, Textes Math., 1, CEDIC, Paris, 1981; Metric structures for Riemannian and non-Riemannian spaces, Progr. Math., 152, Birkh\"auser, Boston, 1999.

\bibitem[GP]{GP}
K. Grove and P. Petersen, Alexandrov spaces with maximal radius, Geom. Topol. 26 (2022), no. 4, 1635--1668.

\bibitem[K]{K}
V. Kapovitch, Perelman's stability theorem, Metric and comparison geometry, 103--136, Surv. Differ. Geom., 11, Int. Press, Somerville, MA, 2007.

\bibitem[L]{L}
N. Lebedeva, Alexandrov spaces with maximal number of extremal points, Geom. Topol. 19 (2015), no. 3, 1493--1521.

\bibitem[LN]{LN}
N. Li and A. Naber, Quantitative estimates on the singular sets of Alexandrov spaces, Peking Math. J. 3 (2020), 203--234.

\bibitem[LS]{LS}
Z. Liu and Z. Shen, On the Betti numbers of Alexandrov spaces, Ann. Global Anal. Geom., 12 (1994), 123--133.

\bibitem[Per1]{Per1}
G. Perelman, Alexandrov's spaces with curvatures bounded from below II, preprint, 1991, available at \url{https://anton-petrunin.github.io/papers/}

\bibitem[Per2]{Per2}
G. Ya. Perel'man, Elements of Morse theory on Aleksandrov spaces, Algebra i Analiz 5 (1993), no. 1, 232--241; translation in St. Petersburg Math. J. 5 (1994), no. 1, 205--213.

\bibitem[Per3]{Per3}
G. Perelman, Spaces with curvature bounded below, Proceedings of ICM (Z\"urich, 1994), 517--525, Birkh\"auser Verlag, Basel, 1995.

\bibitem[PP1]{PP1}
G. Ya. Perel'man and A. M. Petrunin, Extremal subsets in Aleksandrov spaces and the generalized Liberman theorem, Algebra i Analiz 5 (1993), no. 1, 242--256; translation in St. Petersburg Math. J. 5 (1994), no. 1, 215--227.

\bibitem[PP2]{PP2}
G. Perelman and A. Petrunin, Quasigeodesics and gradient curves in Alexandrov spaces, preprint, 1995, available at \url{https://anton-petrunin.github.io/papers/}

\bibitem[Pet1]{Pet1}
A. Petrunin, Applications of quasigeodesics and gradient curves, Comparison geometry, 203--219, Math. Sci. Res. Inst. Publ., 30, Cambridge Univ. Press, Cambridge, 1997.

\bibitem[Pet2]{Pet2}
A. Petrunin, Semiconcave functions in Alexandrov's geometry, Metric and comparison geometry, 137--201, Surv. Differ. Geom., 11, Int. Press, Somerville, MA, 2007; updated version: \href{https://arxiv.org/abs/1304.0292v2}{arXiv:1304.0292v2}, 2015.

\bibitem[S]{S}
T. Shioya, Mass of rays in Alexandrov spaces of nonnegative curvature, Comment. Math. Helv. 69 (1994), no. 2, 208--228.

\bibitem[W]{W}
A. W\"orner, A splitting theorem for nonnegatively curved Alexandrov spaces, Geom. Topol. 16 (2012), no. 4, 2391--2426.

\bibitem[Y]{Y}
T. Yamaguchi, Collapsing and essential coverings, \href{https://arxiv.org/abs/1205.0441v1}{arXiv:1205.0441v1}, 2012.
\end{thebibliography}
\end{document}